\DeclareMathAlphabet{\mathpzc}{OT1}{pzc}{m}{it}
\numberwithin{equation}{section}
\def\@settitle{\begin{center}%
		\baselineskip14\p@\relax
		\bfseries
		\uppercasenonmath\@title
		\@title
		\ifx\@subtitle\@empty\else
		\\[1ex]\uppercasenonmath\@subtitle
		\footnotesize\mdseries\@subtitle
		\fi
	\end{center}%
}
\def\subtitle#1{\gdef\@subtitle{#1}}
\def\@subtitle{}
\def\@settitle{\begin{center}%
		\baselineskip14\p@\relax
		\bfseries
		\uppercasenonmath\@title
		\@title
		\ifx\@subtitle\@empty\else
		\\[1ex]\uppercasenonmath\@subtitle
		\footnotesize\mdseries\@subtitle
		\fi
	\end{center}%
}
\def\subtitle#1{\gdef\@subtitle{#1}}
\def\@subtitle{}
\begin{document}

\theoremstyle{plain}

\newtheorem{theorem}{Theorem}[section]
\newtheorem{lemma}[theorem]{Lemma}
\newtheorem{proposition}[theorem]{Proposition}
\newtheorem{corollary}[theorem]{Corollary}
\newtheorem{Ass}[theorem]{Assumption}
\newtheorem{condition}[theorem]{Condition}
\newtheorem{definition}[theorem]{Definition}
\newtheorem*{lemmaOhne}{Lemma}

\theoremstyle{definition}

\newtheorem{example}[theorem]{Example}
\newtheorem{remark}[theorem]{Remark}
\newtheorem{SA}[theorem]{Assumption}
\newtheorem{discussion}[theorem]{Discussion}
\newtheorem{remarks}[theorem]{Remark}
\newtheorem*{notation}{Remark on Notation}
\newtheorem{application}[theorem]{Application}

\let\oldc\c

\newcommand{\of}{[\hspace{-0.06cm}[}
\newcommand{\gs}{]\hspace{-0.06cm}]}

\newcommand\llambda{{\mathchoice
		{\lambda\mkern-4.5mu{\raisebox{.4ex}{\scriptsize$\backslash$}}}
		{\lambda\mkern-4.83mu{\raisebox{.4ex}{\scriptsize$\backslash$}}}
		{\lambda\mkern-4.5mu{\raisebox{.2ex}{\footnotesize$\scriptscriptstyle\backslash$}}}
		{\lambda\mkern-5.0mu{\raisebox{.2ex}{\tiny$\scriptscriptstyle\backslash$}}}}}

\newcommand{\1}{\mathds{1}}

\newcommand{\F}{\mathbf{F}}
\newcommand{\G}{\mathbf{G}}

\newcommand{\B}{\mathbf{B}}

\newcommand{\M}{\mathcal{M}}

\newcommand{\la}{\langle}
\newcommand{\ra}{\rangle}

\newcommand{\lle}{\langle\hspace{-0.085cm}\langle}
\newcommand{\rre}{\rangle\hspace{-0.085cm}\rangle}
\newcommand{\blle}{\Big\langle\hspace{-0.155cm}\Big\langle}
\newcommand{\brre}{\Big\rangle\hspace{-0.155cm}\Big\rangle}

\newcommand{\X}{\mathsf{X}}

\newcommand{\tr}{\operatorname{tr}}
\newcommand{\N}{{\mathbb{N}}}
\newcommand{\cadlag}{c\`adl\`ag }
\newcommand{\on}{\operatorname}
\newcommand{\oP}{\overline{P}}
\newcommand{\oO}{\mathcal{O}}
\newcommand{\D}{\mathsf{D}} 
\newcommand{\bx}{\mathsf{x}}
\newcommand{\bb}{\hat{b}}
\newcommand{\bs}{\hat{\sigma}}
\newcommand{\bv}{\hat{v}}
\renewcommand{\v}{\mathfrak{m}}
\newcommand{\ob}{\bar{b}}
\newcommand{\oa}{\bar{a}}
\newcommand{\os}{\widehat{\sigma}}
\renewcommand{\j}{\varkappa}
\newcommand{\scl}{\ell}
\newcommand{\Y}{\mathscr{Y}}
\newcommand{\Z}{\mathscr{Z}}
\newcommand{\T}{\mathcal{T}}
\newcommand{\con}{\mathsf{c}}
\newcommand{\nk}{\hspace{-0.25cm}{{\phantom A}_k^n}}
\newcommand{\nl}{\hspace{-0.25cm}{{\phantom A}_1^n}}
\newcommand{\nm}{\hspace{-0.25cm}{{\phantom A}_2^n}}
\newcommand{\n}{\hspace{-0.35cm}{\phantom {Y_s}}^n}
\newcommand{\nme}{\hspace{-0.35cm}{\phantom {Y_s}}^{n - 1}}
\renewcommand{\o}{\hspace{-0.35cm}\phantom {Y_s}^0}
\newcommand{\e}{\hspace{-0.4cm}\phantom {U_s}^1}
\newcommand{\z}{\hspace{-0.4cm}\phantom {U_s}^2}
\newcommand{\iii}{|\hspace{-0.05cm}|\hspace{-0.05cm}|}
\newcommand{\co}{\overline{\on{co}}}
\renewcommand{\k}{\mathsf{k}}
\newcommand{\ovb}{\overline{b}}
\newcommand{\ova}{\overline{a}}
\newcommand{\s}{\mathfrak{s}}
\newcommand{\opsi}{\overline{\Psi}}
\newcommand{\ol}{\mathcal{L}}
\newcommand{\cW}{\mathscr{W}}
\newcommand{\cU}{\mathcal{U}}
\newcommand{\oD}{\overline{D}}
\newcommand{\ua}{\underline{a}}
\newcommand{\ou}{\overline{b}}
\newcommand{\uu}{\underline{b}}
\newcommand{\ovM}{\overline{\mathbb{M}}}
\newcommand{\ocK}{{\overline{\cK}}}
\newcommand{\ocR}{{\overline{\mathcal{R}}}}

\renewcommand{\epsilon}{\varepsilon}
\renewcommand{\rho}{\varrho}

\newcommand{\fPs}{\fP_{\textup{sem}}}
\newcommand{\fPas}{\mathfrak{S}_{\textup{ac}}}
\newcommand{\rrarrow}{\twoheadrightarrow}
\newcommand{\cA}{\mathcal{A}}
\newcommand{\ocA}{\mathcal{U}}
\newcommand{\cR}{\mathcal{R}}
\newcommand{\cK}{\mathcal{K}}
\newcommand{\cQ}{\mathcal{Q}}
\newcommand{\cF}{\mathcal{F}}
\newcommand{\cE}{\mathcal{E}}
\newcommand{\cC}{\mathcal{C}}
\newcommand{\cD}{\mathcal{D}}
\newcommand{\bC}{\mathbb{C}}
\newcommand{\cH}{\mathcal{H}}
\newcommand{\bth}{\overset{\leftarrow}\theta}
\renewcommand{\th}{\theta}
\newcommand{\cG}{\mathcal{G}}
\newcommand{\fPasn}{\mathfrak{S}^{\textup{ac}, n}_{\textup{sem}}}
\newcommand{\CLM}{\mathfrak{M}^\textup{ac}_\textup{loc}}
\newcommand{\Sd}{\mathcal{S}^\textup{sp}_{\textup{d}}}
\newcommand{\Sc}{\mathcal{S}}
\newcommand{\Sac}{\mathcal{S}_\textup{ac}}
\newcommand{\A}{\mathsf{A}}
\newcommand{\Td}{\mathsf{T}^\textup{d}}
\renewcommand{\t}{\mathfrak{t}}
\newcommand{\UC}{\hspace{-0.03cm}\textit{UC}}

\newcommand{\bR}{\mathbb{R}}
\newcommand{\nnabla}{\nabla}
\newcommand{\f}{\mathfrak{f}}
\newcommand{\g}{\mathfrak{g}}
\newcommand{\oconv}{\overline{\on{co}}\hspace{0.075cm}}
\renewcommand{\a}{\mathfrak{a}}
\renewcommand{\b}{\mathfrak{b}}
\renewcommand{\d}{\mathsf{d}}
\newcommand{\bS}{\mathbb{S}^d_+}
\newcommand{\p}{\mathsf{p}}
\newcommand{\dr}{r} 
\newcommand{\m}{\mathbb{M}} 
\newcommand{\Q}{Q}
\newcommand{\usc}{\textit{USC}}
\newcommand{\lsc}{\textit{LSC}}
\newcommand{\q}{\mathfrak{q}}
\renewcommand{\X}{\mathscr{X}}
\newcommand{\W}{\mathscr{W}}
\newcommand{\fP}{\mathcal{P}}
\newcommand{\w}{\mathsf{w}}
\newcommand{\oM}{\mathsf{M}}
\newcommand{\oZ}{\mathsf{Z}}
\newcommand{\oK}{\mathsf{K}}
\renewcommand{\Re}{\operatorname{Re}}
\newcommand{\cCk}{\mathsf{c}_k}
\newcommand{\C}{\mathsf{C}}
\newcommand{\cP}{\mathcal{P}}
\newcommand{\oPi}{\overline{\Pi}}
\newcommand{\cI}{\mathcal{I}}
\renewcommand{\P}{\mathbf{P}}
\renewcommand{\X}{\mathsf{X}}
\renewcommand{\Q}{\mathsf{Q}}
\renewcommand{\Y}{\mathsf{Y}}
\renewcommand{\Z}{\mathsf{Z}}
\newcommand{\E}{\mathbf{E}}
\renewcommand{\Q}{\mathbf{Q}}
\renewcommand{\X}{E} 
\renewcommand{\bS}{\mathbb{S}}
\newcommand{\USA}{\textit{USA}}
\renewcommand{\p}{\dot{\partial}}
\renewcommand{\d}{d}
\newcommand{\cV}{\mathcal{V}}
\newcommand{\ca}{\on{ca} \hspace{0.01cm}}
\renewcommand{\cR}{\mathsf{RM}}
\newcommand{\RPF}{\mathsf{RPF}}
\renewcommand{\c}{\alpha\hspace{0.02cm}} 
\newcommand{\cFs}{\cF_s}
\newcommand{\cFt}{\cF_t}
\newcommand{\cT}{\mathcal{T}}
\newcommand{\hcE}{\widehat{\cE}}
\newcommand{\hcU}{\widehat{\cU}}
\newcommand{\hc}{\widehat{\c}}
\newcommand{\hT}{\widehat{T}}

\renewcommand{\emptyset}{\varnothing}

\allowdisplaybreaks

\makeatletter

 \title[Representation Theorems for Convex Expectations and Semigroups]{Representation Theorems for Convex Expectations \\ and Semigroups on Path Space} 
\author[D. Criens]{David Criens}
\address{D. Criens - Albert-Ludwigs-University of Freiburg, Ernst-Zermelo-Str. 1, 79104 Freiburg, Germany.}
\email{david.criens@stochastik.uni-freiburg.de}

\author[M. Kupper]{Michael Kupper}
\address{M. Kupper - University of Konstanz, 78357 Konstanz, Germany.}
\email{kupper@uni-konstanz.de}



\thanks{We thank Daniel Bartl for helpful comments and discussions.}
\date{\today}

\maketitle

\begin{abstract}
The objective of this paper is to investigate the connection between penalty functions from stochastic optimal control, convex semigroups from analysis and convex expectations from probability theory. Our main result provides a one-to-one relation between these objects. As an application, we use the representation via penality functions and duality arguments to show that convex expectations are determined by their finite dimensional distributions. To illustrate this structural result, we show that Hu and Peng's axiomatic description of \(G\)-L\'evy processes in terms of finite dimensional distributions extends uniquely to the control approach introduced by Neufeld and Nutz. Finally, we show that convex expectations with a Markovian structure are fully determined by their one-dimensional distributions, which give rise to a classical semigroup on the state space. As an application of this result, we establish a Laplace principle for entropic risk measures associated to controlled diffusions.

\smallskip\noindent
 \emph{Key words:} Convex expectation, semigroup, penalty function, path space, relaxed control problem 
   
 		\smallskip
		\noindent \emph{AMS 2020 Subject Classification:} Primary 46N10; 60G65; 93E20 Secondary 47D07; 49J53
\end{abstract}

\section{Introduction}
It is well-known that the unique solution to the Hamilton--Jacobi--Bellman PDE
\begin{align} \label{eq: intro HJB}
\partial_t u = \max_{a \in [a_*, a^*]} \frac{a}{2}\, \partial^2_x u, \quad u (0, \cdot \,) = g, 
\end{align} 
has a canonical control representation as
\begin{align*} \label{eq: intro control} 
v (t, x) = \sup_{\alpha} E \big[ g (X^{x,\alpha}_{t}) \big], 
\end{align*} 
where the supremum is taken over all \([a_*, a^*]\)-valued controls \(\alpha\), and the controlled process \(X^{x, \alpha}\) follows the dynamics
\[
d X^{x, \alpha}_t = \sqrt{\alpha_t} \, d W_t, \quad X_0^{x, \alpha} = x, \quad W = \text{Brownian motion}. 
\]
The PDE problem can also be understood in terms of the Nisio semigroup (cf.~\cite{denk2020semigroup,nisio76}), which is given by
\[
S_t (g) (x) = \sup_{\alpha} E \big[ g (X^{x,\alpha}_{t}) \big]. 
\]
This is a sublinear semigroup on the space \(C_b (\bR; \bR)\) of bounded continuous functions whose generator equation is given by \eqref{eq: intro HJB}. A third perspective is given by Peng's \(G\)-expectation (cf.~\cite{peng07}), a sublinear expectation whose one-dimensional distributions are characterized by the PDE \eqref{eq: intro HJB}. Summarizing these observations, the PDE \eqref{eq: intro HJB} establishes connections between control theory, analysis, and probability theory. The objective of this paper is to investigate the relationships among these concepts from a bird perspective within a broad framework on the path space \(\Omega\), consisting of \cadlag functions from \(\bR\) into a Polish space~\(E\).  Notably, we consider a two-sided time index set, which is essential for ensuring the semigroup property in our path-dependent setting. This is analogous to the evolutionary semigroups in \cite{DKK24}, which arise as the composition of the shift semigroup and a linear expectation operator, thereby formalizing the concept of linear transition semigroups within a path-dependent framework.

\smallskip 
Let us explain our \emph{main contributions} in more detail. Our first main result shows that every convex expectation \(\cE = ((t, \omega) \mapsto \cE_t (\, \cdot \,) (\omega))\) has a representation of the form 
\begin{align}\label{eq: ex penality formula intro}
\cE_t (\varphi) (\omega) = \sup_P \Big( E^P \big[ \varphi \big] - \alpha (t, \omega, P) \Big)
\end{align} 
for a penalty function \(\alpha\), and that conversely, for any penalty function \(\alpha\), the formula~\eqref{eq: ex penality formula intro} defines a convex expectation. 
Here, \(\varphi\) is a real-valued upper semianalytic function and the supremum is taken over all probability measures on the path space.
This representation corresponds to a dynamic version of Choquet's capacitability theorem and relies strongly on its conditional version from \cite{bartl_20}. We emphasize that our definition of a convex expectation includes the time consistency property and continuity from above and below. In the sublinear case, the penalty function is either zero or \( + \infty \), and the representation \eqref{eq: ex penality formula intro} reduces to
\begin{align} \label{eq: sublinear intro}
\cE_t (\varphi) (\omega) = \sup_{P \in \cU (t, \omega)} E^P \big[ \varphi \big]
\end{align} 
for a so-called (set-valued) representation map \((t, \omega) \mapsto \cU (t, \omega)\).
 
In our second main result, we establish a connection between convex expectations \(\cE\) to convex semigroups \((S_t)_{t \geq 0}\) that are both assumed to be homogeneous in the sense that 
\[
\cE_t (\varphi) = \cE_0 (\varphi \circ \theta_{-t}) \circ \theta_t, \qquad S_t (\varphi) = S_0 (\varphi \circ \theta_t),
\]
where \(\theta_t (\omega) = \omega (\, \cdot + t)\) denotes the usual shift operator. To define homogeneity, the two-sided time interval is crucial. 
For the convex expectation, homogeneity can be viewed as a two-sided time-homogeneous Markov property. To understand this, think of \(\cE_t\) as conditional expectation given the past up to time \(t\), and notice that homogeneity entails
\[
\cE_t (\varphi \circ \theta_t) = \cE_0 (\varphi) \circ \theta_t, 
\]
which corresponds to the classical homogeneous Markov property. 
In particular, by enlarging the state space from $E$ to $\Omega$, this extends the concept of Markov processes to the path-dependent framework, as discussed in \cite{DKK24} within a linear setting.
The homogeneity of the semigroup \((S_t)_{t \geq 0}\) is best understood by recalling the classical representation of a linear Feller semigroup \((T_t)_{t \geq 0}\) as  
\(T_t (f) = E [ f (Y_t) ]\),
where \((Y_t)_{t \geq 0}\) is a Feller process; see, e.g., \cite{kallenberg}.
As a main result, we show that, for a homogeneous convex expectation \(\cE\), the formula
\[
S_t (\varphi) := \cE_0 (\varphi \circ \theta_t) 
\]
defines a homogeneous convex semigroup and conversely, for a homogeneous convex semigroup \((\widehat{S}_t)_{t \geq 0}\), the formula
\[
\widehat{\cE}_t (\varphi) := \widehat{S}_0 (\varphi \circ \theta_{-t}) \circ \theta_t
\]
defines a homogeneous convex expectation. This resembles the classical linear result for Feller semigroups.

\smallskip
Our main results have the following \emph{implications}. First, using the formula \eqref{eq: ex penality formula intro}, we can derive a control representation for convex expectations. In particular, since the penalty function requires convexity and compactness assumptions, it is natural to consider relaxed control rules, as studied in \cite{EKNJ88,ElKa15,K90}. This idea is further explored in Subsection~\ref{sec: example relaxed control}, where we show that a broad class of convex expectations naturally arises within a classical relaxed control framework for controlled stochastic differential equations.

Second, relying on the representation \eqref{eq: ex penality formula intro} and convex duality arguments, 
we show that convex expectations are determined by their finite dimensional distributions.
This means that two convex expectations \(\cE\) and \(\widehat{\cE}\) coincide (on the space of all bounded upper semianalytic functions from \(\Omega\) into \(\bR\)) whenever 
\[
\cE_t \big( f (X_{t_1}, \dots X_{t_n}) \big) = \widehat{\cE}_t \big( f (X_{t_1}, \dots X_{t_n}) \big)
\]
for all \(t \leq t_1 < \dots < t_n < \infty\), $n\in\mathbb{N}$ and \(f \in C_b (E^n; \bR)\), where \(X_t (\omega) = \omega (t)\) denotes the coordinate process. 
This resembles the linear result that laws of \cadlag processes are determined by their finite dimensional distributions, being a central tool for the analysis of stochastic processes.
As an application, we connect different approaches for nonlinear L\'evy processes, namely those from \cite{HP_21}, that characterized the finite dimensional distributions, and those from \cite{neufeld2017nonlinear}, which considered worst case expectations of the form \eqref{eq: sublinear intro} on the space of upper semianalytic functions.

Third, we establish a general comparison result for homogeneous Markovian convex expectations in terms of their one-dimensional distributions. To be more precise, for two homogeneous Markovian convex expectations \(\cE\) and \(\widehat{\cE}\), we prove that 
\begin{align*}
\cE_0 (f (X_t)) \leq \widehat{\cE}_0 (f (X_t)) \ &\text{for all } t \in \bR, \, f \in C_b (E; \bR) \quad
\Longleftrightarrow \quad \cE_t \leq \widehat{\cE}_t \ \text{for all } t \in \bR.
\end{align*} 
As an application of this result, in the Markovian case, convex expectations are fully determined by the associated Markovian transition semigroups and their generators.

Let us now comment on \emph{related literature}. An important inspiration for our main result is the article \cite{bartl_20} by Bartl, which establishes the connection between convex expectations and penalty functions in a static conditional setting (an earlier version also included results in finite discrete time).

The systematic study of sublinear expectations on path spaces started with the seminal work \cite{peng07} of Peng on the \(G\)-expectation; see also \cite{P2019} for an exhaustive overview. More recently, extensions to settings with path-dependent domains of uncertainty and related measure theoretic approaches came into the focus of interest. 
We highlight \cite{nutz_13} for introducing random \(G\)-expectation, \cite{neufeld2014measurability,NVH} for measure-theoretic foundations, and \cite{hol16} for the first general treatment of the semigroup connection.
Studies on more general stochastic processes under uncertainty, including their PDE, semigroup, and Feller properties, can be found in \cite{C_24_JMAA,CN_23_EJP,CN_23_arxiv_jumps,CN_23_arxiv_viscosity, CN_24_SPA}.
An analytic approach based on Nisio and Chernoff-type approximations to stochastic processes under uncertainty is given in \cite{BK2023, denk2020semigroup, NR2021}; see also \cite{BDKN2025} for a general treatment of convex monotone semigroups.

The connection between Peng's \(G\)-expectation and stochastic control problems was first explored in~\cite{denis_hu_peng_11} through the PDE \eqref{eq: intro HJB}. In more general settings, related results follow from uniqueness results for viscosity solutions to Hamilton–Jacobi–Bellman PDEs. For instance, \cite{CN_23_EJP} identifies continuous semimartingales under uncertainty as the unique viscosity solutions to path-dependent extensions of \eqref{eq: intro HJB}, as studied in~\cite{cosso_russo_22, zhou_23}. Recently, without relying on PDE techniques, \cite{CN_23_arxiv_jumps} established a connection between the measure-theoretic approach to semimartingales under uncertainty and weak and relaxed control formulations, as introduced in the seminal works \cite{EKNJ88, ElKa15, K90}.

Stochastic control problems are intrinsically linked to backward SDEs (BSDEs). In \cite{bismut_72}, Bismut observed that the adjoint variable of a stochastic control problem satisfies a BSDE, see also \cite{benesoussan_82} for further details. The success of BSDEs started with the groundbreaking work of Pardoux and Peng \cite{padoux_peng_90} on nonlinear BSDEs. Since then, interest in BSDEs has grown significantly due to their applications in control theory and mathematical finance (cf.~\cite{EPQ1997,pardoux_R_14}). This has led to several generalizations, including 2BSDEs (cf.~\cite{CSTV2007, KPZ2015, STZ2012}), which provide stochastic representations for fully nonlinear PDEs. Moreover, in Brownian filtrations, BSDEs allow for the representation of \(g\)-expectations (cf.~\cite{CHMP2002}) and certain convex expectations in terms of their penalty functions (cf.~\cite{DPG_10}).

\smallskip
The paper is structured as follows. Section~\ref{sec: main1} introduces the main concepts and our key results. Section~\ref{sec: sublinear} specializes these results to the sublinear case, and Section~\ref{sec: 1D extensions} establishes a comparison result for Markovian convex expectations. Section~\ref{sec: application} illustrates the results by applying them to a Laplace principle for controlled diffusion processes, while Section~\ref{sec:related concepts} relates our findings to the existing literature. The paper concludes with an appendix.

 \section{Convex expectations, representation penalty functions and semigroups} \label{sec: main1}

In this section, we introduce the basic concepts and establish a one-to-one correspondence between homogeneous convex expectations, their representation penalty functions, and associated semigroups. These are all defined on path spaces of \cadlag and continuous functions on the two-sided time index set \(\mathbb{R}\). 
Furthermore, we show that convex expectations are uniquely determined by their finite-dimensional distributions. To illustrate these concepts, we conclude with an example that provides a typical class of convex expectations related to stochastic control problems in relaxed form.

\subsection{The path space}
Let \(\X\) be a Polish space and define the \emph{two-sided path space} \(\Omega\)  
either as the space  \(D ( \bR; \X )\) of all \cadlag functions, or as the space \(C (\bR; E)\) of all continuous functions from $\bR$ to $E$.
The coordinate process \(X\)  on \(\Omega\) is given by \(X_t (\omega) = \omega (t)\) for all \(\omega \in \Omega\) and \(t \in \bR\).

 We set \(\cF := \sigma (X_t,\, t \in \bR)\) and \(\cF_t := \sigma (X_s,\, s \in (- \infty, t])\) for \(t \in \bR\), with  the convention \(\cF_\infty := \cF\).\footnote{Note that \(\cF_t = \sigma (X_s, s \in (\mathbb{Q} \cap (- \infty, t)) \cup \{t\})\), which shows that \(\cF_t\) is countably generated.} The space \(\Omega\) is endowed with the two-sided Skorokhod \(J_1\) topology, which coincides with the local uniform topology if  \(\Omega = C (\bR; E)\). 
It is well known that \(\cF\) coincides with the Borel $\sigma$-field \(\mathcal{B} (\Omega)\); see \cite[Chapter~12]{Whitt} and \cite[Appendix~B]{DKK24} for more details on this topology. 

 For a \(\sigma\)-field \(\mathcal{G}\), a set \(A\) is called \emph{\(\mathcal{G}\)-analytic} if there is a Souslin scheme \(\{A_{n_1, \dots, n_k} \} \subset \mathcal{G}\) such that 
\[
A = \bigcup_{(n_i) \in \mathbb{N}^\infty} \bigcap_{k \in\N } A_{n_1, \dots, n_k}.
\]
In case \(\mathcal{G} = \cF\), we simply speak of analytic sets.
Analytic sets are sometimes also called Souslin sets, e.g., in the monograph \cite{bogachev}. We refer to \cite[Chapter~1]{bogachev} for an exhaustive overview on properties of analytic sets. 

A function \(\varphi \colon \Omega \to \bR\) is called \emph{upper (resp., lower) \(\mathcal{G}\)-semianalytic} if the sets \(\{ \varphi \geq c\}\) (resp., \(\{\varphi \leq c\}\)) are \(\mathcal{G}\)-analytic for all \(c \in \bR\). In case \(\mathcal{G} = \cF\), we simply speak of upper (resp., lower) semianalytic functions. The space of all bounded upper \(\mathcal{G}\)-semianalytic functions is denoted by \(\USA_b (\Omega, \mathcal{G}; \bR)\), and we set \(\USA_b (\Omega; \bR) := \USA_b (\Omega, \cF; \bR)\). For an overview of basic results on semianalytic functions; see \cite[Section~7.7]{bershre}.

Let \(C_b(\Omega, \mathcal{G}; \bR)\) be the space of all bounded continuous \(\mathcal{G}\)-measurable functions on \(\Omega\), and set \(C_b (\Omega; \bR) := C_b (\Omega, \cF; \bR)\). We denote by \(\mathfrak{P}(\Omega)\) the set of all Borel probability measures on \(\Omega\), endowed with the weak topology \(\sigma(\mathfrak{P}(\Omega), C_b(\Omega; \bR))\), which is separable and completely metrizable.

We recall that a \emph{stochastic kernel} is a map
 $Q\colon \Omega\times \cF\to [0,1]$ such that $Q_\omega\in  \mathfrak{P} (\Omega)$ for all $\omega\in\Omega$, and $\omega\mapsto Q_\omega(B)$ is $\mathcal{F}$-measurable for all $B\in \cF$. It is called $\cF_t$-measurable if $\omega\mapsto Q_\omega(B)$ is $\cF_t$-measurable for all $B\in \cF$. For \(P \in \mathfrak{P} (\Omega)\) and $t\in\bR$, we denote by
\(P (\, \cdot \mid \cFt)\) the \emph{regular version of the conditional probability}, i.e., an $\cF_t$-measurable stochastic kernel $P (\, \cdot \mid \cFt)(\cdot)\colon \Omega\times\cF\to[0,1]$ satisfying $\int_B P(A\mid \cF_t)\,dP=P(A\cap B)$ for all $A\in \cF$ and $B\in \cF_t$. 

We conclude this subsection with additional notations and definitions that will be used frequently throughout the article.
For \(\omega \in \Omega\) and \(t \in \bR\), we define
\[
[\omega]_t := \{ \omega' \in \Omega \colon \omega' = \omega \text{ on } [0, t] \}.
\] 
For \(\omega, \omega' \in \Omega\) and \(t \in \bR\), we define the pasting 
\[
\omega \otimes_t \omega' := \begin{cases} \omega, & \text{on } (- \infty, t), \\ 
	\omega' - \omega' (t) + \omega (t), 
	& \text{on } [t, \infty). \end{cases} 
\]
Given \(P \in \mathfrak{P} (\Omega)\), a stochastic kernel \(Q\) and \(t \in \bR\), we define the pasting measure 
\[
(P  \otimes_t  Q) (A) := \iint \1_A (\omega  \otimes_{t}  \omega')\, Q_\omega (d \omega') P (d \omega), \quad A \in \cF. 
\]
We mainly consider kernels which satisfy \( Q_\omega([\omega]_t) = 1 \). In this case, the pasting measure has the simpler representation
\[
(P \otimes_t  Q) (A) = \int Q_\omega (A) \, P (d \omega) = E^P \big[ Q_\cdot (A) \big], \quad A \in \cF.
\]
For \(t \in \bR\), let \(\theta_t \colon \Omega \to \Omega\) be the usual shift given by \(\theta_t (\omega) := \omega (\, \cdot  + t)\). Observe that in our two-sided setting, we have 
\(
\theta_t \circ \theta_{-t} = \textup{id}.
\) 
Finally, we set 
\(P_t := P \circ \theta^{-1}_t
\)
for all \(P \in \mathfrak{P}(\Omega)\) and \(t \in \bR\).

\subsection{Convex expectations and representation penalty functions} 
Random outcomes are commonly modeled using probabilities and expectations. 
However, in many situations,  these probabilities are only partially known (aleatoric uncertainty), resulting in expectations that are imprecise and lie in intervals defined by lower and upper expectations. In this article, we focus on upper expectations $\mathcal{E}$, as they typically translate to lower expectation through $\underline{\mathcal{E}}(\varphi):=-\mathcal{E}(-\varphi)$. An upper expectation is generally not linear and is therefore often referred to as a \emph{nonlinear expectation}. More specifically, a \emph{convex expectation} is a map $\mathcal{E}\colon\mathcal{H}\to\bR$, defined on some pointwise ordered convex space $\mathcal{H}$ of functions including constants, which satisfies the following properties:
\begin{itemize}
    \item $\mathcal{E}(\varphi)=c$ for all constant functions $\varphi\equiv c$.
    \item $\mathcal{E}(\varphi)\le\mathcal{E}(\psi)$ for all $\varphi,\psi\in\mathcal{H}$ with $\varphi\le\psi$.
    \item $\mathcal{E}(\lambda\varphi+(1-\lambda)\psi)\le\lambda\mathcal{E}(\varphi)+(1-\lambda)\mathcal{E}(\psi)$ for all $\varphi,\psi\in\mathcal{H}$ and $\lambda\in(0,1)$.
\end{itemize}
Up to sign conventions, convex expectations are also called convex risk measures; see~\cite{FS2016}. If they are additionally positively homogeneous, they are referred to as sublinear expectations in stochastic calculus under uncertainty \cite{P2019}, upper coherent previsions in the theory of imprecise probabilities \cite{W1991}, or upper expectations in robust statistics \cite{H1981}. Depending on $\mathcal{H}$ and under additional continuity conditions, 
a convex expectation admits a dual representation \[\mathcal{E}(\varphi)=\sup_{P \in \mathfrak{P}(\Omega)} \Big(E^P[\varphi]-\alpha(P)\Big),\] which establishes the connection to classical probabilities 
$P$ and their expectations $E^P$. For $\mathcal{H}=\USA_b (\Omega; \bR)$, 
such a representation\footnote{Under continuity conditions related to \ref{NE5} below.} follows from Choquet’s capacitability theorem; see~\cite{bartl_20, BCK_19}. 

\smallskip
For conditional expectations $\cE\colon \USA_b (\Omega; \bR)\to \USA_b (\Omega,\cG; \bR)$, a similar representation result holds, as shown in the remarkable work \cite{bartl_20} by Bartl, which builds the foundation for the next result. In contrast to the linear case, where the tower property for conditional expectations always holds, this is not true for convex expectations, and additional conditions are necessary. 

\begin{definition} \label{def: sublinear expectation}
	A \emph{convex expectation} on $\USA_b (\Omega; \bR)$ is a map
	\[\cE \colon \USA_b (\Omega; \bR) \to \USA_b (\bR \times \Omega; \bR),\] 
written \((\cE (\varphi)) (t, \omega) \equiv \cE_t (\varphi) (\omega)\), which satisfies the following properties:
 	\begin{enumerate} [label=\textup{(E\arabic*)},ref=\textup{(E\arabic*)}]
  \item \label{NE0}  \(\cE_t (\varphi) \in \USA_b (\Omega, \cF_t; \bR)\) for all \(t \in \bR\) and \(\varphi \in \USA_b (\Omega; \bR)\).
 	\item \label{NE2}
		\(\cE_t (\varphi)  = \varphi \) for all \(t \in \bR\) and \(\varphi \in \USA_b (\Omega,\cF_t; \bR)\).
		\item \label{NE1}
		\(\cE_t (\varphi)  \leq \cE_t (\psi) \) for all \(t \in \bR\) and \(\varphi, \psi \in \USA_b (\Omega; \bR)\) with \(\varphi \leq \psi\).
		
			\item \label{NE3}
		\(\cE_t (\lambda \varphi + (1 - \lambda) \psi) \leq \lambda \cE_t (\varphi) + (1- \lambda) \cE_t (\psi)\) for all \(t \in \bR\), \(\varphi, \psi \in \USA_b (\Omega; \bR)\) and \(\lambda \in (0, 1)\).
			\item \label{NE4}
		The map \(\omega \mapsto \cE_t (\varphi) (\omega)\) is \(\cFt\)-measurable for all \(\varphi \in C_b (\Omega; \bR)\).
			
			\item \label{NE6}
  \(\mathcal{E}_{t} ( \mathcal{E}_s (\varphi) )  = \mathcal{E}_t (\varphi) \) for all \(s > t\) and all \(\varphi \in \USA_b (\Omega; \bR)\).
  \item \label{NE5}
   For every $t\in \bR$, the map \(\cE_t\) is continuous from above on $C_b (\Omega; \bR)$,\footnote{I.e., \(\cE_t (\varphi_n)   \searrow \cE_t (\varphi) \) for all $(\varphi_n)_{n\in\N}\subset C_b (\Omega; \bR)$ with \(\varphi_n \searrow \varphi \in \USA_b (\Omega; \bR)\).} and continuous from below.\footnote{I.e., \(\cE_t (\varphi_n)  \nearrow \cE_t (\varphi) \) for all $(\varphi_n)_{n\in\N}\subset \USA_b (\Omega; \bR)$ with \(\varphi_n \nearrow \varphi \in \USA_b (\Omega; \bR)\).}
   \end{enumerate}

A convex expectation \(\cE\) is said to have \emph{no fixed times of discontinuity} if, for every \(t \in \bR\) and \(\omega \in \Omega\),
\[
\cE_t ( a \1_{\{ \Delta X_s\, \not =\, 0 \}} ) (\omega) = 0 \quad\text{for all } s > t \text{ and } a > 0.
\] 
Here, \(\Delta X_t := X_t - X_{t-}\) denotes the jump of the process \(X\) at time \(t\).

A convex expectation \(\cE\) is called \emph{homogeneous} if 
	\begin{align} \label{eq: cE homog}
	\cE_t (\varphi) = \cE_0 (\varphi \circ \theta_{-t}) \circ \theta_{t}\quad \text{for all } t \in \bR \text{ and }\varphi \in \USA_b (\Omega; \bR).
	\end{align}
\end{definition}

\begin{discussion}\label{dicussion:expectation}
\begin{enumerate}
\item[(a)] The properties \ref{NE0}--\ref{NE3} are conditional analogs to the corresponding static properties of convex expectations. Condition \ref{NE6} is the tower property, while \ref{NE5} is a technical continuity condition that ensures the dual representation. In our dynamic setting, in addition to \cite{bartl_20}, we impose that convex expectations are jointly upper semianalytic in $\mathbb{R} \times \Omega$ (e.g., allowing for their evaluation at stopping times).
\smallskip
\item[(b)] Condition \ref{NE4} seems somewhat unnatural at first glance. However, it is purely technical and guarantees the dual version of the tower property, also known as the cocycling property. As shown in Appendix \ref{app:tower property}, in the homogeneous case without fixed times of discontinuity, it can be replaced by the following version which gives the condition a finite dimensional flavor:\smallskip
\begin{itemize}
\item[{\rm (E5'})] \label{NE4'}
		The map \(\omega \mapsto \cE_t (\varphi) (\omega)\) is \(\cFt\)-measurable for all \(\varphi \in D_t\),  where \[D_{t}:= \Big\{ \, g (X_{t_1}, \dots, X_{t_n}) \colon g \in C_b (E^n; \bR), \, t_1, \dots, t_n \in \mathbb{Q} \cap (t,\infty), \, n \in \mathbb{N}\, \Big\}.\]
 \end{itemize} 
\smallskip
\item[(c)] As is evident by definition, a homogeneous convex expectation \(\cE\) has no fixed times of discontinuity if and only if
	\[
	\cE_0 (a \1_{\{\Delta X_s \, \not = \, 0\}} ) (\omega) = 0 \quad\text{for all } a, s > 0 \text{ and } \omega \in \Omega.
	\] 
\end{enumerate}
\end{discussion}

The following lemma is a version of Galmarino's test for upper semianalytic functions. It explains that  
condition \ref{NE0} implies that \(\cE_t (\varphi) (\omega)\) depends on \(\omega\) only through \((\omega (s))_{s \leq t}\).

\begin{lemma} \label{lem: galmarino upper semianalytic}
Fix \(t \in \bR\) and a function \(\psi \colon \Omega \to \bR\). Then, \(\psi \in \USA (\Omega, \cF_t; \bR)\) if and only if \(\psi \in \USA (\Omega; \bR)\) and \(\psi = \psi (X_{\cdot \wedge t})\). 
\end{lemma}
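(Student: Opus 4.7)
My plan is to reduce everything to the stopping map $\pi_t \colon \Omega \to \Omega$, $\pi_t(\omega) := \omega(\,\cdot \wedge t)$, and then mirror the classical proof of Galmarino's test at the level of Souslin schemes. The map $\pi_t$ is well-defined since stopping a \cadlag (resp., continuous) path at $t$ preserves regularity. Two elementary properties of $\pi_t$ will carry the whole proof: (i) $\pi_t$ is $\cF_t / \cF$-measurable, which can be seen on the generators $\{X_s \in B\}$ via $\pi_t^{-1}(\{X_s \in B\}) = \{X_{s \wedge t} \in B\} \in \cF_t$ followed by a monotone class extension; and (ii) every $A \in \cF_t$ satisfies $\pi_t^{-1}(A) = A$ (the classical $\sigma$-field Galmarino test), since the family $\{A \in \cF \colon \pi_t^{-1}(A) = A\}$ is a $\sigma$-field containing every generator $\{X_s \in B\}$ with $s \leq t$.

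For the ``only if'' direction, I would fix $c \in \bR$ and write $\{\psi \geq c\} = \bigcup_{(n_i)} \bigcap_k A_{n_1, \dots, n_k}$ for some Souslin scheme $\{A_{n_1, \dots, n_k}\} \subset \cF_t$. Since $\cF_t \subset \cF$, the very same representation immediately yields $\psi \in \USA(\Omega; \bR)$. Applying property (ii) to each $A_{n_1, \dots, n_k}$ and using that preimages commute with countable unions and intersections gives $\pi_t^{-1}(\{\psi \geq c\}) = \{\psi \geq c\}$ for every $c \in \bR$, which is equivalent to $\psi = \psi \circ \pi_t$, i.e., $\psi = \psi(X_{\cdot \wedge t})$.

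For the converse direction, the identity $\psi = \psi \circ \pi_t$ gives $\{\psi \geq c\} = \pi_t^{-1}(\{\psi \geq c\})$ for all $c \in \bR$. Given a Souslin representation $\{\psi \geq c\} = \bigcup_{(n_i)} \bigcap_k B_{n_1, \dots, n_k}$ with $B_{n_1, \dots, n_k} \in \cF$, applying $\pi_t^{-1}$ produces the scheme $\{\pi_t^{-1}(B_{n_1, \dots, n_k})\} \subset \cF_t$ (by property (i)) representing $\{\psi \geq c\}$, so $\psi \in \USA(\Omega, \cF_t; \bR)$. I do not anticipate a real obstacle: the whole argument is essentially a bookkeeping exercise that lifts the classical Galmarino test to the analytic setting through the Souslin representation. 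The only points that deserve a moment of care are that $\pi_t$ maps the two-sided path space into itself and that the Souslin operation is well-behaved under preimages of measurable maps, both of which are immediate from the definitions.
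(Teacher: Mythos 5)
Your proof is correct. The converse direction (from $\psi \in \USA(\Omega;\bR)$ with $\psi = \psi(X_{\cdot\wedge t})$ to $\cF_t$-semianalyticity) is essentially identical to the paper's: pull the Souslin scheme back through $X_{\cdot\wedge t}$ and note that the resulting scheme lies in $\cF_t$. Where you genuinely diverge is the ``only if'' direction. The paper gets $\psi = \psi(X_{\cdot\wedge t})$ by first observing that upper semianalytic functions are universally measurable (citing \cite[Corollary~1.10.6]{bogachev}) and then invoking Galmarino's test for universally $\cF_t$-measurable functions from \cite[Lemma~2.5]{NVH}. You instead work directly at the level of the Souslin representation: you establish the set-level identity $\pi_t^{-1}(A) = A$ for all $A \in \cF_t$ by a generator argument, push it through the (arbitrary, hence also uncountable) unions and countable intersections of the Souslin operation to get $\pi_t^{-1}(\{\psi \geq c\}) = \{\psi \geq c\}$ for every $c$, and deduce $\psi = \psi \circ \pi_t$ from the coincidence of all superlevel sets. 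This buys a self-contained, purely elementary argument that avoids the detour through universal measurability and the external reference; the paper's route is shorter on the page but outsources the key step. The only step you leave implicit --- that equality of all superlevel sets of the two real-valued functions $\psi$ and $\psi\circ\pi_t$ forces pointwise equality --- is a one-line verification (take $c = \psi(\omega)$ and $c = \psi(\pi_t(\omega))$), so there is no gap.
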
 
\begin{proof}
    First, suppose that \(\psi \in \USA (\Omega, \cF_t; \bR)\). Then, clearly \(\psi \in \USA (\Omega; \bR)\). Further, \(\psi\) is universally \(\cF_t\)-measurable by \cite[Corollary~1.10.6]{bogachev} and Galmarino's test for universally measurable functions (cf.~\cite[Lemma~2.5]{NVH}) implies that \(\psi = \psi (X_{\cdot \wedge t})\). 

    Conversely, suppose that \(\psi \in \USA (\Omega; \bR)\) and \(\psi = \psi (X_{\cdot \wedge t})\). Fix an arbitrary \(c \in \bR\). By definition of upper semianalyticity, \(\{\psi \geq c\}\) is \(\cF\)-analytic, i.e., there exists a Souslin scheme \(\{A_{n_1, \dots, n_k}\} \subset \cF\) such that 
    \(
    \{\psi \geq c \} = \bigcup_{(n_i) \in \mathbb{N}^\infty} \bigcap_{k\in\N} \, A_{n_1, \dots, n_k}.
    \)
    Using \(\psi = \psi (X_{\cdot \wedge t})\), we get that
    \(
    \{ \psi \geq c \} = X_{\cdot \wedge t}^{-1} \, (\{ \psi \geq c\}) =  \bigcup_{(n_i) \in \mathbb{N}^\infty} \bigcap_{k\in \N} \, X_{\cdot \wedge t}^{-1} \, (A_{n_1, \dots, n_k}). 
    \)
    Since \(\{X^{-1}_{\cdot \wedge t} \, (A_{n_1, \dots, n_k}) \}\subset \cF_t \) is a Souslin scheme, it follows that \(\psi \in \USA (\Omega, \cF_t; \bR)\).
\end{proof}

The dual analogue of a convex expectation is a representation penalty function. 
\begin{definition} \label{def: RPF}
	We call a function 
	\[
	\c \colon \bR \times \Omega \times \mathfrak{P} (\Omega) \to [0, \infty] 
	\]
	a {\em representation penalty function} if it satisfies the following properties:
	\begin{enumerate} [label=\textup{(P\arabic*)},ref=\textup{(P\arabic*)}]
		\item \label{PF1} The map \((t, \omega, P) \mapsto \c (t, \omega, P)\) is lower semianalytic.
		\item \label{PF2} The map \((\omega, P) \mapsto \c (t, \omega, P)\) is \(\cFt \otimes \mathcal{B}(\mathfrak{P}(\Omega))\)-lower semianalytic for all \(t \in \bR\).
		\item \label{PF3} The map \(P \mapsto \c (t, \omega, P)\) is convex for all \((t, \omega) \in \bR \times \Omega\).
		\item \label{PF4}  For every \((t, \omega) \in \bR \times \Omega\), \(\inf_{P\in \mathfrak{P}(\Omega)} \c (t, \omega, P) = 0\). 
		\item \label{PF5} The set \(\{P \colon \c (t, \omega, P) \leq c\}\) is compact in \(\mathfrak{P}(\Omega)\) for all \((t, \omega) \in \bR \times \Omega\) and \(c \in \bR_+\).
		\item \label{PF6}  For every \((t, \omega) \in \bR \times \Omega\) and \(P \in \mathfrak{P}(\Omega)\), \(\c (t, \omega, P) < \infty\) implies \(P ([\omega]_t) = 1\). 
		\item \label{PF7}  The map
		\(
		\omega \mapsto \sup_{P \in \mathfrak{P}(\Omega)} ( E^P [ \varphi ] - \c (t, \omega, P))\)
		is \(\cFt\)-measurable for all \(t \in \bR\) and \(\varphi \in C_b (\Omega; \bR)\).
			\item \label{PF8} 
		For every \(s, t \in \bR\) with \(t < s\), \(\omega \in \Omega\) and \(P \in \mathfrak{P}(\Omega)\),
		\begin{equation}\label{eq: P8 equation}
  \begin{split}
			\c (t, \omega, P) = \sup_{ \varphi \in \USA_b (\Omega, \cF_s; \bR)} \inf_{Q \in \mathfrak{P} (\Omega)} \Big( E^P \big[ \varphi \big] - E^Q & \big[ \varphi \big] + \c (t, \omega, Q) \Big) 
   \\ &+ E^P \big[ \c (s, X, P (\, \cdot \mid \cFs)) \big].
   \end{split} 
		\end{equation} 
		\end{enumerate}
We denote by \(\RPF\) the set of all representation penalty functions.

Furthermore, we say that \(\c\in \RPF\) has {\em no fixed times of discontinuity} if, for every \(t \in \bR\), \(\omega \in \Omega\) and $P \in \mathfrak{P}(\Omega)$ with \( \c (t, \omega, P ) < \infty \), 
\[
P ( \Delta X_s = 0 ) = 1 \quad\text{for all } s > t.
\] 

Finally, we call \(\c \in \RPF\) {\em homogeneous} if
\begin{align} \label{eq: RPF homog}
\c (t, \omega, P) = \c (0, \theta_t (\omega), P_t)\quad \text{for all }(t, \omega, P) \in \bR\times\Omega\times\mathfrak{P}(\Omega).
\end{align} 

\end{definition}  

\begin{discussion}
 \begin{enumerate}
 \item[(a)] In the absence of fixed times of discontinuity, the cocycle property \ref{PF8} gets more transparent. Indeed, let \(\c \colon \bR \times \Omega \times \mathfrak{P}(\Omega) \to [0, \infty]\) be a function without fixed times of discontinuity, which satisfies \ref{PF3}--\ref{PF5}. Then, by Lemma~\ref{lem: rem idenity} below, for every \(s, t \in \bR\) with \(s > t\), \(\omega \in \Omega\) and \(P \in \mathfrak{P}(\Omega)\), we have 
	\begin{align*}
		\sup_{ \varphi \in \USA_b (\Omega, \cF_s; \bR)} \inf_{Q \in \mathfrak{P} (\Omega)}  \Big( E^P &\big[ \varphi \big] - E^Q \big[ \varphi \big] + \c (t, \omega, Q) \Big)   = \inf \Big\{ \c (t, \omega, Q) \colon Q = P \text{ on } \cFs\Big\}.
	\end{align*}
\smallskip
\item[(b)] The technical condition \ref{PF7} is the dual analogue of condition \ref{NE4}. In line with Discussion~\ref{dicussion:expectation}, in the homogeneous case without fixed times of discontinuity, one can replace \( C_b(\Omega, \mathbb{R}) \) in condition \ref{PF7} with \( D_t \).
\end{enumerate}
\end{discussion}

The following is our first main result, which links convex expectations with representation penalty functions. We point out that the dual formulation establishes a connection between convex expectations and stochastic control problems, see Subsection~\ref{sec: example relaxed control} below.
\begin{theorem} \label{theo: represntation convex}
	The following are equivalent:
	\begin{enumerate}
		\item[\textup{(i)}] 
		\(\cE\) is a convex expectation. 
		\item[\textup{(ii)}] 
		There exists a representation penalty function \(\c \in \RPF\) such that 
		\begin{align} \label{eq: penalty expectation}
			\cE_t (\varphi) (\omega) = \sup_{P \in \mathfrak{P}(\Omega)} \Big( E^P \big[ \varphi \big] - \c (t, \omega, P) \Big)
		\end{align}
		for all \((t, \omega) \in \bR \times \Omega\) and \(\varphi \in \USA_b (\Omega; \bR)\).
	\end{enumerate}
	Moreover, \(\cE\) is homogeneous if and only if \(\c\) is homogeneous.
 Further, \(\cE\) has no fixed times of discontinuity if and only if \(\c\) has no fixed times of discontinuity.
 In the homogeneous case without fixed times of discontinuity, $C_b(\Omega;\bR)$ can be replaced by $D_t$ in both conditions \ref{NE4} and \ref{PF7}.
\end{theorem}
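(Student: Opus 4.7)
The strategy is to establish a Fenchel--Moreau-type duality between convex expectations and their conjugate penalty functions, building on the conditional capacitability theorem of \cite{bartl_20}. The main work is verifying that the abstract axioms \ref{NE0}--\ref{NE5} on one side translate precisely to the axioms \ref{PF1}--\ref{PF8} on the other side.

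For the direction (i) $\Rightarrow$ (ii), I would define the conjugate
\[
\c(t, \omega, P) := \sup_{\varphi \in \USA_b(\Omega; \bR)} \bigl( E^P[\varphi] - \cE_t(\varphi)(\omega) \bigr),
\]
and verify each of \ref{PF1}--\ref{PF8}. Property \ref{PF4} follows from $\cE_t(c) = c$ (implied by \ref{NE2}), and \ref{PF3} convexity follows from \ref{NE3}. For \ref{PF6}, one takes $\varphi = N \1_{[\omega]_t^c}$, which by Lemma~\ref{lem: galmarino upper semianalytic} lies in $\USA_b(\Omega, \cF_t; \bR)$ so that \ref{NE2} gives $\cE_t(\varphi)(\omega) = 0$, and letting $N \to \infty$ forces $P([\omega]_t) = 1$ whenever $\c(t, \omega, P) < \infty$. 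The compactness \ref{PF5} follows from continuity from above \ref{NE5} via a Daniell--Stone/Prokhorov tightness argument on the Polish space $\Omega$. The measurability properties \ref{PF1}, \ref{PF2}, \ref{PF7} come from \ref{NE0} and \ref{NE4}, noting that \ref{NE5} allows the supremum in the conjugate to be taken over a countable family of continuous functions, so lower semianalyticity in $(t,\omega,P)$ is inherited.

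For the reverse direction (ii) $\Rightarrow$ (i), I define $\cE$ via \eqref{eq: penalty expectation} and check \ref{NE0}--\ref{NE5}. Properties \ref{NE1}, \ref{NE3} are immediate from the structure of the supremum, while \ref{NE2} follows from \ref{PF6} and Lemma~\ref{lem: galmarino upper semianalytic}: an $\cF_t$-measurable function $\varphi$ is constant on $[\omega]_t$ with value $\varphi(\omega)$, hence $E^P[\varphi] = \varphi(\omega)$ for every $P$ supported on $[\omega]_t$, and \ref{PF4} supplies the matching lower bound. Continuity from below reduces to interchanging the supremum over $P$ with the monotone limit in $E^P[\varphi_n]$, which is routine. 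Continuity from above is the delicate point: one needs to exchange $\inf_n$ with $\sup_P$, and this is precisely where \ref{PF5} enters through a minimax argument on the compact sublevel sets of $\c(t,\omega,\cdot)$.

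The main obstacle is the equivalence between the tower property \ref{NE6} and the cocycle identity \ref{PF8}. I expect this to follow by adapting Bartl's conditional Fenchel--Moreau theorem from the static setting to our time-indexed filtration on path space: the first term on the right-hand side of \eqref{eq: P8 equation} is the Fenchel biconjugate of $\c(t,\omega,\cdot)$ restricted to $\cF_s$-measurable test functions, while the second term $E^P[\c(s, X, P(\cdot \mid \cF_s))]$ captures the conditional expectation of the future penalty; together they encode the dual of $\cE_t = \cE_t \circ \cE_s$. The remaining assertions follow by direct computation: homogeneity transfers between \eqref{eq: cE homog} and \eqref{eq: RPF homog} via the substitution $P \mapsto P_t$ in the dual formula; the absence of fixed times of discontinuity transfers via the test functions $\varphi = a \1_{\{\Delta X_s \neq 0\}}$, since $\cE_t(\varphi)(\omega) = 0$ if and only if $P(\Delta X_s \neq 0) = 0$ for every $P$ with $\c(t,\omega,P) < \infty$; and the replacement of $C_b(\Omega;\bR)$ by $D_t$ in \ref{NE4} and \ref{PF7} is deferred to Appendix~\ref{app:tower property}, where the measurability of convex expectations is reduced to their finite-dimensional projections using homogeneity and right-continuity of paths.
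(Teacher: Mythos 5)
Your overall strategy coincides with the paper's: construct the penalty as a Fenchel conjugate of \(\cE_t\), verify that the axioms \ref{NE0}--\ref{NE5} translate into \ref{PF1}--\ref{PF8}, and transfer homogeneity and the absence of fixed times of discontinuity by direct computation in the dual formula (your test functions \(N\1_{[\omega]_t^c}\) and \(a\1_{\{\Delta X_s \neq 0\}}\), and the substitution \(P \mapsto P_t\), are exactly the paper's devices). However, there is one genuine gap in the direction (i) \(\Rightarrow\) (ii): you define \(\c\) as a conjugate and verify its properties, but you never establish that the representation \eqref{eq: penalty expectation} itself holds, i.e., that the biconjugate recovers \(\cE_t(\varphi)(\omega)\) for \emph{every} \(\varphi \in \USA_b(\Omega;\bR)\). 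This is the deepest step: convex duality only gives the identity on \(C_b(\Omega;\bR)\), and the extension to upper semianalytic functions requires Choquet's capacitability theorem in conditional form; the paper obtains both the representation and the properties \ref{PF2}--\ref{PF6} from \cite[Theorem~2.6]{bartl_20} rather than verifying them by hand. Relatedly, the paper takes the conjugate over \(C_b(\Omega;\bR)\) (indeed over a countable subset \(D \subset C_b(\Omega;\bR)\) extracted from Bartl's proof), which is what makes \ref{PF1} a countable supremum of lower semianalytic functions; your conjugate over all of \(\USA_b(\Omega;\bR)\) agrees with this one only \emph{after} the representation is known, so the measurability argument as you state it is circular.

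The second soft spot is the equivalence of the tower property \ref{NE6} with the cocycle identity \ref{PF8}, which you explicitly only ``expect'' to follow. Your interpretation of the two terms in \eqref{eq: P8 equation} is correct, but no argument is given; the paper resolves this by citing \cite[Theorem~2.11]{bartl_20} (and, for the \(D_t\)-refinement, by replacing that citation with Theorem~\ref{theo: theorem 2.11 bartl}, whose applicability is precisely why \ref{NE4} and \ref{PF7} are needed). If you intend to rely on Bartl's results for these two steps, you should say so explicitly and check that his hypotheses are met in the dynamic setting (this is where \ref{NE4}/\ref{PF7} enter); as written, the proposal leaves the two hardest ingredients of the theorem unproved.
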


\begin{proof} (i) \(\implies\) (ii): By \cite[Theorem~2.6]{bartl_20}, there exists a map \(\c \colon \bR \times \Omega \times \mathfrak{P} (\Omega) \to [0, \infty]\) such that \ref{PF2}--\ref{PF6} and \eqref{eq: penalty expectation} hold. 
	By \cite[Remark~2.7]{bartl_20}, we have 
	\begin{align} \label{eq: penalty representation}
	\c (t, \omega, P) = \sup_{\varphi \in C_b (\Omega; \bR)} \Big( E^{P} \big[ \varphi \big] - \cE_t (\varphi) (\omega) \Big).
	\end{align} 
Furthermore, in the proof of \cite[Theorem~2.6]{bartl_20} it was shown that there exists a countable set \(D \subset C_b (\Omega; \bR)\) such that 
\[
	\c (t, \omega, P) = \sup_{\varphi \in D} \Big( E^{P} \big[ \varphi \big] - \cE_t (\varphi) (\omega) \Big).
\]
Hence, \ref{PF1} follows from \cite[Lemma~7.30]{bershre}, as \((t, \omega, P) \mapsto E^{P} [ \varphi ] - \cE_t (\varphi) (\omega)\) is lower semianalytic, given that  \((t, \omega) \mapsto \cE_t (\varphi) (\omega) \in \USA_b(\bR \times \Omega; \bR)\) by definition. 
The property \ref{PF7} is a reformulation of \ref{NE4} and therefore satisfied.
Finally, \ref{PF8} follows from \ref{NE4}, \ref{NE6} and \cite[Theorem~2.11]{bartl_20}. Thus, we have established (ii).

\smallskip
(ii) \(\implies\) (i): The properties \ref{NE0}-\ref{NE3} and \ref{NE5} follow from \cite[Theorem~2.6]{bartl_20}. Furthermore, \ref{NE4} is ensured by \ref{PF7}, and finally, \ref{NE6} is due to \cite[Theorem~2.11]{bartl_20} and \ref{PF8}. As a consequence, \(\cE\) is a convex expectation.

\smallskip 
\textit{Equivalence of absence of fixed times of discontinuity:}
	First, assume that \(\cE\) has no fixed times of discontinuity. Fix \(s, t \in \bR\) with \(s > t\), \(\omega \in \Omega\), \(c \in \bR_+\) and \(P\in\mathfrak{P}(\Omega)\) with  \(\c (t, \omega, P) \leq c\). 
	For every \(a \geq \frac{c}{2}\), we have 
	\begin{align*}
	0 = \cE_t (a \1_{\{\Delta X_s \, \not = \, 0\}}) (\omega) &= \sup_{Q \in \{ \c (t, \omega, \, \cdot \,) \,\leq\, 2 a \}} \Big(E^Q \big[ a \1_{\{ \Delta X_s \, \not= \, 0 \}} \big] - \c (t, \omega, Q) \Big) 
	\\& \geq a \, P (\Delta X_s \not = 0) - \alpha (t, \omega, P) \geq a \, P (\Delta X_s\not  = 0) - c, \phantom \int
	\end{align*}
	and therefore
	\(
	P (\Delta X_s \not = 0) \leq \frac{c}{a} \to 0\) as \( a \to \infty\). 
	This shows that \(\c\) has no fixed times of discontinuity.

	Second, assume that \(\c\) has no fixed times of discontinuity. For every \(s, t \in \bR\) with \(s > t\), \(\omega \in \Omega\) and \(a > 0\), we have 
	\begin{align*}
		\cE_t (a \1_{\{\Delta X_s \, \not= \, 0\}}) (\omega) &= \sup_{Q \in \{ \c (t, \omega, \, \cdot \,) \, \leq\, 2 a \}} \Big(E^Q \big[ a \1_{\{ \Delta X_s \, \not= \, 0 \}} \big] - \c (t, \omega, Q) \Big) 
		\\&= - \inf_{Q \in \{ \c (t, \omega, \, \cdot \,) \, \leq\, 2 a \}} \, \c (t, \omega, Q) 
		\leq - \inf_{Q \in \mathfrak{P}(\Omega)} \c (t, \omega, Q) = 0.
	\end{align*}
	This shows that \(\cE\) has no fixed times of discontinuity. 

 \smallskip 
 \textit{Equivalence of homogenity:}
	First, we assume that \(\cE\) is homogeneous. By \eqref{eq: penalty representation}, for every \((t, \omega) \in \bR \times \Omega\) and \(\varphi \in \USA_b (\Omega; \bR)\), we have 
	\begin{align*}
		&\c (t, \omega, P) = \sup_{\varphi \in C_b (\Omega; \bR)} \Big( E^{P} \big[ \varphi \big] - \cE_t (\varphi) (\omega) \Big)\\ 
		&= \sup_{\varphi \in C_b (\Omega; \bR)} \Big( E^{P} \big[ \varphi \circ \theta_{-t} \circ \theta_t \big] - \cE_0 (\varphi \circ \theta_{-t}) (\theta_t (\omega)) \Big) 
		\\
		&= \sup_{\varphi \in C_b (\Omega; \bR)} \Big( E^{P_t} \big[ \varphi \big] - \cE_0 (\varphi) (\theta_t (\omega)) \Big) 
		= \c (0, \theta_t (\omega), P_t), \phantom \int
	\end{align*} 
	where we used that \(\omega' \mapsto \theta_{-t} (\omega')\) is continuous by \cite[Proposition~B.5]{DKK24}.
	
Second, we  assume that \(\c\) is homogeneous. For every \((t, \omega) \in \bR\times \Omega\) and \(\varphi \in \USA_b (\Omega; \bR)\), 
\begin{equation*} \begin{split}
	(\cE_0 &(\varphi \circ \theta_{-t}) \circ \theta_t) (\omega) = \sup_{P \in \mathfrak{P}(\Omega)} \Big( E^P \big[ \varphi \circ \theta_{-t} \big] - \c (0, \theta_t (\omega), P) \Big)  \\&= \sup_{P \in \mathfrak{P}(\Omega)} \Big( E^P \big[ \varphi  \big] - \c (0, \theta_t (\omega), P_t) \Big)  
		= \sup_{P \in \mathfrak{P}(\Omega)} \Big( E^P \big[ \varphi  \big] - \c (t, \omega, P) \Big)  
		= \cE_t (\varphi) (\omega), \phantom \int
	\end{split}\end{equation*} 
which shows that \(\cE\) is homogeneous. 

\smallskip
Finally, let us explain that in the homogeneous case without fixed times of discontinuity, the equivalence of (i) and (ii) remains true if we replace $C_b(\Omega;\bR)$ by $D_t$ in both conditions \ref{NE4} and \ref{PF7}. The only incidents where the precise formulation of \ref{NE4} and \ref{PF7} are used are the applications of \cite[Theorem~2.11]{bartl_20}. In the homogeneous case without fixed times of discontinuity, we can use Theorem~\ref{theo: theorem 2.11 bartl} instead of \cite[Theorem~2.11]{bartl_20}, which validates our claim. 
\end{proof} 

\begin{remark} 
The space \(\USA_b (\Omega; \bR)\) of upper semianalytic functions is very natural in the context of stochastic control. 
However, in some applications one may not need this full generality, and it is natural to ask whether a version of Theorem~\ref{theo: represntation convex} could be established when the space of upper semi{\em analytic} functions is replaced by the smaller space \(\textit{USC}_b (\Omega; \bR)\) of upper semi{\em continuous} functions, and how the Definitions~\ref{def: sublinear expectation} and \ref{def: RPF} would have to be adjusted. A full discussion of this problem is beyond the scope of this remark, but let us comment on two specific points that have to be taken into account.
On the one hand, if \(\cE_s\) is only defined on the space \(\textit{USC}_b (\Omega; \bR)\) of upper semi{\em continuous} functions, the tower rule \(\cE_s \circ \cE_t = \cE_s\) can only be well-defined if the map \(\omega \mapsto \cE_t (\varphi) (\omega)\) is upper semi{\em continuous}. This invariance property entails additional regularity properties of the representation penalty function \(\c\). This is best understood in the sublinear case (see Section~\ref{sec: sublinear} below), where \(\cE_t (\textit{USC}_b (\Omega; \bR)) (\omega) \subset \textit{USC}_b (\Omega; \bR)\) is essentially equivalent to the property that the set-valued map \(\omega \mapsto \{ P \colon \c (t, \omega, P) = 0\} \) is upper hemicontinuous; see \cite[Theorem~II.20]{castaing_valadier_77}. In the upper semi{\em analytic} case, this set-valued map only needs to be measurable; see Lemma~\ref{lem: scalar measurability}.
On the other hand, working with upper semi{\em continuous} functions allows us to drop the continuity from below property from condition \ref{NE5}; see \cite[Theorem~2.2]{BCK_19}. 
\end{remark}

\begin{remark} \label{rem: regular cond property convex}
    Classical regular conditional expectations \(E [ \, \cdot \mid \mathcal{G}]\) have the property that, for almost all \(\omega\), 
    \[
    Y \text{ is \(\mathcal{G}\)-measurable} \implies E \big[ g (Y, Z) \mid \mathcal{G}\big] (\omega) = E \big[ g (Y (\omega), Z) \mid \mathcal{G} \big] (\omega).
    \]
    Theorem~\ref{theo: represntation convex} guarantees that convex expectations have a similar property. Namely, the representation \eqref{eq: penalty expectation} implies that 
    \[
    \cE_t ( g (Y, Z) ) (\omega) = \cE_t ( g ( Y(\omega), Z) ) (\omega)
    \]
    whenever \(Y (\omega)\) depends on \(\omega\) only through \((\omega (s))_{s \leq t}\).    
\end{remark}

It is well-known that linear Feller processes are quasi-left continuous and, in particular, have no fixed times of discontinuity in the classical sense; see \cite[Proposition~17.29]{kallenberg}. 
We interpret the following result in a similar way as it establishes a condition for the absence of fixed times of discontinuity in terms of a continuity property of the convex expectation, which is in the spirit of the time continuity of a Feller semigroup.
\begin{proposition} \label{prop: ftd left continuity}
	Let \(\cE\) be a convex expectation and let \(d \colon E \times E \to \bR_+\) be a bounded continuous function such that \(d (x, y) = 0\) if and only if  \(x = y\). 
	The following are equivalent:
	\begin{enumerate}
		\item[\textup{(i)}] 
		\(\cE\) has no fixed times of discontinuity.
		\item[\textup{(ii)}] 
		For every \(t, s \in \bR\) with \(s > t\), \(\omega \in \Omega\) and \(a > 0\), the map
		\[
		r \mapsto \cE_t ( a \, d ( X_s, X_r) ) (\omega) 
		\] 
		is left-continuous at \(s\).
	\end{enumerate}
\end{proposition}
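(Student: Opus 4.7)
The plan is to use the penalty function representation from Theorem~\ref{theo: represntation convex} to translate the probabilistic statement ``no fixed times of discontinuity'' into the measure-theoretic statement that every $P$ in any sublevel set $\{\c(t, \omega, \cdot) \leq c\}$ satisfies $P(\Delta X_s = 0) = 1$, and then to run asymmetric arguments for the two implications: a Fatou-plus-continuity-from-below argument for (ii)~$\Rightarrow$~(i), and a Skorokhod-representation plus compactness argument for (i)~$\Rightarrow$~(ii). As a preparation common to both, I would first record the Fatou-type inequality
\[
\cE_t(\liminf_n \psi_n) \leq \liminf_n \cE_t(\psi_n)
\]
for uniformly bounded sequences $(\psi_n) \subset \USA_b(\Omega; \bR)$, which follows from monotonicity~\ref{NE1} and continuity from below in~\ref{NE5}, using that $\inf_{k \geq n} \psi_k \in \USA_b(\Omega; \bR)$ because countable intersections of analytic sets are analytic.

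For (ii)~$\Rightarrow$~(i), fix $t < s$, $\omega \in \Omega$ and $a > 0$, and define
\[
\varphi_n := \min\bigl( a n\, d(X_s, X_{s-}),\, a \bigr) \in \USA_b(\Omega; \bR), \quad n \in \mathbb{N}.
\]
Since $d(x, y) = 0$ iff $x = y$, the sequence $\varphi_n$ increases pointwise to $a \mathbf{1}_{\{\Delta X_s \neq 0\}}$, so continuity from below gives $\cE_t(\varphi_n)(\omega) \nearrow \cE_t(a \mathbf{1}_{\{\Delta X_s \neq 0\}})(\omega)$, and it therefore suffices to show $\cE_t(\varphi_n)(\omega) = 0$ for each $n$. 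For any sequence $r_m \uparrow s$, continuity of $d$ together with the c\`adl\`ag property of $X$ yields $\min(a n\, d(X_s, X_{r_m}), a) \to \varphi_n$ pointwise; combining the Fatou inequality above with the monotonicity bound $\min(a n\, d(X_s, X_{r_m}), a) \leq a n\, d(X_s, X_{r_m})$ then gives
\[
\cE_t(\varphi_n)(\omega) \leq \liminf_m \cE_t\bigl(a n\, d(X_s, X_{r_m})\bigr)(\omega) = 0
\]
by assumption (ii) applied with $a$ replaced by $a n$.

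For (i)~$\Rightarrow$~(ii), I would invoke the penalty representation together with the compactness property~\ref{PF5}. Since $0 \leq a\, d(X_s, X_r) \leq a \|d\|_\infty$, the relevant supremum in the penalty representation of $\cE_t(a d(X_s, X_r))(\omega)$ can be restricted to $K := \{P \in \mathfrak{P}(\Omega) \colon \c(t, \omega, P) \leq a \|d\|_\infty\}$, which is compact by~\ref{PF5} and, by Theorem~\ref{theo: represntation convex} together with (i), consists of measures satisfying $P(\Delta X_s = 0) = 1$. Since $a d \geq 0$ and $\cE_t(0) = 0$, it suffices to show that $\sup_{P \in K} E^P[d(X_s, X_r)] \to 0$ as $r \uparrow s$. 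Assume for contradiction that this fails: then there exist $\epsilon > 0$, $r_n \uparrow s$ and $P_n \in K$ with $E^{P_n}[d(X_s, X_{r_n})] \geq \epsilon$, and by compactness we may pass to a weakly convergent subsequence $P_n \to P \in K$. Applying Skorokhod's representation theorem on the Polish path space $\Omega$ yields random elements $\widetilde X^n, \widetilde X$ on a common probability space $(\widetilde\Omega,\widetilde{\mathcal F},\widetilde Q)$ with laws $P_n, P$ such that $\widetilde X^n \to \widetilde X$ $\widetilde Q$-a.s.\ in the two-sided $J_1$-topology. Since $P \in K$, the limit $\widetilde X$ is $\widetilde Q$-a.s.\ continuous at $s$, and the standard fact that $J_1$-convergence together with continuity of the limit at $s$ implies $\widetilde X^n(u_n) \to \widetilde X(s)$ for every $u_n \to s$ gives $\widetilde X^n(s), \widetilde X^n(r_n) \to \widetilde X(s)$ $\widetilde Q$-a.s. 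Continuity of $d$ and bounded convergence then yield $E^{P_n}[d(X_s, X_{r_n})] \to 0$, contradicting the lower bound.

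The main obstacle is the uniformization step in the (i)~$\Rightarrow$~(ii) direction: the pointwise convergence $E^P[d(X_s, X_r)] \to 0$ for each individual $P \in K$ is elementary by dominated convergence, but one needs uniformity in $P$ over $K$ jointly with $r \uparrow s$. The simultaneous use of the compactness from~\ref{PF5}, Skorokhod's a.s.-representation on the Polish space $\Omega$, and the joint continuity of evaluation at continuity points of the limit in the two-sided $J_1$-topology (as recorded, e.g., in~\cite[Appendix~B]{DKK24}) is precisely what makes this uniformization tractable.
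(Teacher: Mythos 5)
Your proposal is correct, and both implications go through, but the route differs from the paper's in instructive ways. For (ii)~$\Rightarrow$~(i), the paper stays entirely on the dual side: it tests an arbitrary $P$ in a sublevel set of the penalty against the hypothesis, obtains $E^P[d(X_s,X_{s-})]\le c/a$ by dominated convergence for each fixed $P$, lets $a\to\infty$ to conclude that $\c$ has no fixed times of discontinuity, and then invokes the equivalence from Theorem~\ref{theo: represntation convex}. You instead work primally: you approximate $a\1_{\{\Delta X_s\neq 0\}}$ from below by the truncations $\min(an\,d(X_s,X_{s-}),a)$ and combine continuity from below from \ref{NE5} with a Fatou inequality (valid since countable infima of upper semianalytic functions are upper semianalytic, cf.~\cite[Lemma~7.30]{bershre}). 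This is a genuinely different and arguably more elementary argument that never touches the penalty function in this direction. For (i)~$\Rightarrow$~(ii), both proofs reduce to showing that $\sup_{P}E^P[d(X_s,X_r)]\to 0$ uniformly over the compact sublevel set provided by \ref{PF5}; the paper packages the uniformization via the continuous mapping theorem and Berge's maximum theorem, whereas you extract a weakly convergent subsequence and apply Skorokhod's representation theorem together with continuity of evaluation at continuity points of the limit. These are equivalent in substance; the paper's version is shorter, yours is more self-contained. The only point worth stating explicitly is the restriction of the supremum to $\{\c(t,\omega,\cdot)\le a\|d\|_\infty\}$: this is legitimate because $\cE_t(a\,d(X_s,X_r))(\omega)\ge \cE_t(0)(\omega)=0$ while measures outside that set contribute strictly negative values to the supremum, but as written you assert it without justification.
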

\begin{proof}
	(i) \(\implies\) (ii): Fix \(s > t\) and \(\omega \in \Omega\). 
	By Theorem~\ref{theo: represntation convex}, there exists  \(\c \in \RPF\) without fixed times of discontinuity such that 
	\begin{align} \label{eq: pf prop rep}
		\cE_t (\varphi) (\omega) = \sup_{P \in \mathfrak{P}(\Omega)} \Big( E^P \big[ \varphi \big] - \c (t, \omega, P) \Big).
	\end{align}
	The function
	\(
	(t', \omega') \mapsto \omega' (t')
	\)
	is continuous at every \((t', \omega')\) with \(\Delta \omega' (t') = 0\); see  \cite[Remark, p.~452]{HWY}. Hence, since \(\c\) has no fixed times of discontinuity, the continuous mapping theorem (cf.~\cite[Theorem~5.27]{kallenberg} for a suitable formulation) implies that 
	\[
	(r, P) \mapsto E^P \big[ a\, d (X_s, X_{r}) \big]
	\] 
	is continuous on \((t, \infty) \times \{ \c (t, \omega, \, \cdot \, ) \leq 2a \|d\|_\infty \}\). 
	As \(P \mapsto \c (t, \omega, P)\) is lower semicontinuous and \(\{\c (t, \omega, \, \cdot \,) \leq 2a\|d\|_\infty\}\) is compact, 
	a version of Berge's maximum theorem (cf.~\cite[Proposition~1.3.3]{hu_papa}) implies that
	\begin{align*}
		\limsup_{r \nearrow s} \cE_t (a \, d (X_s, X_r)) (\omega) &= \limsup_{r \nearrow s} \sup_{ P \in \{ \c (t, \omega, \, \cdot \,) \, \leq \, 2a \|d\|_\infty\}} \Big( E^P \big[ a \, d (X_s, X_{r}) \big] - \c (t, \omega, P) \Big)
		\\&\leq \sup_{ P \in \{ \c (t, \omega, \, \cdot \,) \, \leq \, 2a \|d\|_\infty\}} \Big( E^P \big[ a \, d (X_s, X_{s}) \big]- \c (t, \omega, P) \Big) = 0.
	\end{align*}
	This shows $\lim_{r \nearrow s} \cE_t (a \, d (X_s, X_r)) (\omega) = 0$, proving in particular (ii).
	
	\smallskip 
	(ii) \(\Longrightarrow\) (i): We still assume that \(\c \in \RPF\) is such that \eqref{eq: pf prop rep} holds. In the following, we prove that \(\c\) has no fixed times of discontinuity.
Fix \(s > t, \omega \in \Omega\), \(c \in \bR_+\) and \(P \in \{ \c (t, \omega,\, \cdot \,) \leq c \}\). Then, for every \(a > 0\), 
\[
E^P \big[ a \, d (X_s, X_{r}) \big] - \c (t, \omega, P) \leq \cE_t (a \, d (X_s, X_{r})) \to 0 \quad\text{as } r \nearrow s.
\]
Consequently, we obtain that 
\begin{align*}
	E^P \big[ d (X_s, X_{s-}) \big] = \lim_{r \nearrow s} E^P \big[ d (X_s, X_r) \big] \leq \frac{\c (t, \omega, P)}{a} \leq \frac{c}{a} \to 0 \quad\text{as } a \to \infty.
\end{align*}
Hence, \(P\)-a.s. \(\Delta X_s = 0\), which shows that \(\c\) has no fixed times of discontinuities. By Theorem~\ref{theo: represntation convex}, we conclude that (i) holds. The proof is complete.
\end{proof}

\subsection{Uniqueness via finite-dimensional marginal distributions}
In this subsection, we use duality arguments to show that representation penalty functions, and thus convex expectations, are uniquely determined by their finite-dimensional marginal distributions. 
To this end, for \(t \in \bR\) and \(t < s \leq \infty\), let
\begin{align*}
D_{t, s} := \Big\{ \, g (X_{t_1}, \dots, X_{t_n}) \colon g \in C_b (E^n; \bR),\, t < t_1 < \dots < t_n \leq s, \, n \in \mathbb{N}\, \Big\}
\end{align*}
denote the set of finite-dimensional test functions on the interval $(t,s]$. The subsequent argumentation strongly relies on the following result.

\begin{lemma} \label{lem: rem idenity}
Let \(\c \colon \bR \times \Omega \times \mathfrak{P}(\Omega) \to [0, \infty]\) be a function without fixed times of discontinuity, which satisfies \ref{PF3}-\ref{PF6}. 
Then, for every \(-\infty< t < s \leq \infty\), \(\omega \in \Omega\) and \(P \in \mathfrak{P}(\Omega)\) with \(P ([\omega]_t) = 1\), we have
	\begin{align*}
		\sup_{ \varphi \in D_{t, s}} \inf_{Q \in \mathfrak{P} (\Omega)} &\Big( E^P \big[ \varphi \big] - E^Q \big[ \varphi \big] + \c (t, \omega, Q) \Big)
  \\&= \sup_{ \varphi \in \USA_b (\Omega, \cF_s; \bR)} \inf_{Q \in \mathfrak{P} (\Omega)} \Big( E^P \big[ \varphi \big] - E^Q \big[ \varphi \big] + \c (t, \omega, Q) \Big)  
  \\&= \inf \Big\{ \c (t, \omega, Q) \colon Q = P \text{ on } \cFs\Big\}. \phantom {\int^A}
	\end{align*}
\end{lemma}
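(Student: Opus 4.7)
My plan is to establish the three-way equality by proving the easy inclusion-based upper bounds $L \le M \le R$, where $L$, $M$, $R$ denote the three quantities in the order stated, and then closing the argument with the non-trivial reverse $L \ge R$ via Sion's minimax. The inclusion $D_{t,s} \subset \USA_b(\Omega, \cF_s; \bR)$ gives $L \le M$ at once. For $M \le R$, I plug any admissible $Q$ (satisfying $Q = P$ on $\cF_s$) into the inner infimum; since Lemma~\ref{lem: galmarino upper semianalytic} ensures every $\varphi \in \USA_b(\Omega, \cF_s; \bR)$ is $\cF_s$-measurable and hence $E^P[\varphi] = E^Q[\varphi]$, the bracket collapses to $\c(t,\omega, Q)$ and the bound follows upon taking the infimum over such $Q$.

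The heart of the proof is $L \ge R$. I set $F(\varphi, Q) := E^P[\varphi] - E^Q[\varphi] + \c(t,\omega, Q)$, which is affine in $\varphi$ and, by \ref{PF3}, convex in $Q$. The map $Q \mapsto \c(t,\omega, Q)$ is lsc on the Polish space $\mathfrak{P}(\Omega)$ because \ref{PF5} makes its sublevel sets compact, hence closed. Assuming $R < \infty$ (the case $R = \infty$ is handled by a Hahn--Banach separation producing $\varphi \in D_{t,s}$ with arbitrarily large $\inf_Q F$), for $K > 0$ and $\lambda := R + 2K + 1$ I apply Sion's minimax on the convex set $B_K := \{\varphi \in D_{t,s} : \|\varphi\|_\infty \le K\}$ against the convex compact sublevel $\cK_\lambda := \{\c(t,\omega, \cdot) \le \lambda\}$, obtaining
\[
\sup_{\varphi \in B_K} \inf_{Q \in \cK_\lambda} F(\varphi, Q) = \inf_{Q \in \cK_\lambda} \big[\c(t,\omega, Q) + K\, d(P, Q)\big],
\]
with $d(P, Q) := \sup\{E^P[\varphi] - E^Q[\varphi] : \varphi \in D_{t,s},\, \|\varphi\|_\infty \le 1\}$. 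Since $D_{t,s}$ is a real vector space, $d$ is a seminorm that vanishes precisely when $P$ and $Q$ share every finite-dimensional marginal at times in $(t, s]$. The NFTD assumption together with \ref{PF6} now enters: for $Q \in \cK_\lambda$, \ref{PF6} yields $Q([\omega]_t) = 1$, so $P$ and $Q$ agree on $\cF_t$; if in addition $d(P, Q) = 0$, matching joint marginals at arbitrary times in $(t, s]$ transfers via $\pi$-$\lambda$ to agreement on $\sigma(X_r : t < r \le s)$, and combined with agreement on $\cF_t$ this forces $Q = P$ on $\cF_s$. NFTD is what rules out atomic jump contributions that could distort this marginal-based identification at the boundary time $s$.

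A two-step limiting argument concludes. By compactness of $\cK_\lambda$, weak continuity of $d$, and lsc of $\c$, any minimizing sequence $Q_K$ for $\inf_{\cK_\lambda}(\c + K d)$ admits a subsequential limit $Q^\infty \in \cK_\lambda$ with $d(P, Q^\infty) = 0$ (because $K d(Q_K) \le R$ forces $d(Q_K) \to 0$) and $\c(t,\omega, Q^\infty) \le \liminf_K \c(t,\omega, Q_K)$, so the penalty infimum converges to $R$ as $K \to \infty$. Meanwhile, for $\varphi \in B_K$ the estimate $\c(t,\omega, Q) \le F(\varphi, Q) + 2\|\varphi\|_\infty$ confines near-minimizers of $F(\varphi, \cdot)$ inside $\cK_\lambda$, hence $\sup_{B_K} \inf_{Q \in \mathfrak{P}(\Omega)} F = \sup_{B_K} \inf_{\cK_\lambda} F$. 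Passing to $K \to \infty$ delivers $L \ge R$ and thereby completes the chain. I expect the main technical obstacle to be this penalty-form minimax combined with the coercivity reduction: each individual ingredient is standard in convex analysis, but the interplay of simultaneously restricting $\varphi$ and $Q$, and then eliminating the restrictions through the $K \to \infty$ limit, has to be coordinated carefully with the NFTD-based identification of marginals.
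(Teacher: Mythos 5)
Your overall strategy coincides with the paper's: the two easy inequalities are obtained exactly as in the paper, and the hard inequality $L\ge R$ is proved by a minimax theorem on a truncated test-function class against a compact sublevel set of $\c(t,\omega,\cdot)$, followed by a compactness/limiting argument and the observation that $D_{t,s}$ is measure determining on $\cF_s$ for measures charging $[\omega]_t$. The genuine differences are cosmetic but not without value: you invoke Sion's theorem where the paper uses Fan's; you package the dual quantity as the seminorm $d(P,Q)=\sup\{E^P[\varphi]-E^Q[\varphi]:\varphi\in D_{t,s},\ \|\varphi\|_\infty\le 1\}$, which makes the identification $d(P,Q^\infty)=0\Rightarrow Q^\infty=P$ on $\cF_s$ transparent; and you justify the restriction of the inner infimum to the sublevel set $\cK_\lambda$ by a direct coercivity estimate rather than by citing \cite[Equation~(7)]{bartl_20}, which makes that step self-contained.

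Two points need repair. First, you misattribute the role of the no-fixed-times-of-discontinuity hypothesis: it is not about ``atomic jump contributions at the boundary time $s$'' (on $[\omega]_t$-concentrated measures the full family $D_{t,s}$ is measure determining on $\cF_s$ by a $\pi$--$\lambda$ argument regardless). Its actual purpose --- and the place where your sketch is silent --- is to make each $\varphi=g(X_{t_1},\dots,X_{t_n})$ a $Q$-a.s.\ continuous function for every $Q$ in a sublevel set of $\c(t,\omega,\cdot)$, so that $Q\mapsto E^Q[\varphi]$ is weakly continuous there by the continuous mapping theorem. On $\Omega=D(\bR;E)$ the coordinate evaluations are not continuous, so without this step the lower semicontinuity of $Q\mapsto F(\varphi,Q)$ required by Sion's theorem and the lower semicontinuity of $d(P,\cdot)$ used to conclude $d(P,Q^\infty)=0$ are both unjustified; this is exactly where the paper invokes the hypothesis. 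Second, your case split is oriented the wrong way: you assume $R<\infty$ and defer $R=\infty$ to an unsubstantiated Hahn--Banach separation, whereas the clean route (taken by the paper) is to assume $L<\infty$ --- otherwise $L\ge R$ is trivial --- and run the argument to conclude $L\ge R$; with that change your choice $\lambda=L+2K+1$ works verbatim and no separate case remains. Both issues are local and fixable with the ingredients you already have.
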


\begin{proof}
First of all, the inequality
 \begin{align*}
\sup_{ \varphi \in D_{t, s}} &\inf_{Q \in \mathfrak{P} (\Omega)} \Big( E^P \big[ \varphi \big] - E^Q \big[ \varphi \big] + \c (t, \omega, Q) \Big)
  \\&\leq \sup_{ \varphi \in \USA_b (\Omega, \cF_s; \bR)} \inf_{Q \in \mathfrak{P} (\Omega)} \Big( E^P \big[ \varphi \big] - E^Q \big[ \varphi \big] + \c (t, \omega, Q) \Big)
 \end{align*} 
 is trivial. Next, notice that 
 \begin{align*}
     \sup_{ \varphi \in \USA_b (\Omega, \cF_s; \bR)} & \inf_{Q \in \mathfrak{P} (\Omega)} \Big( E^P \big[ \varphi \big] - E^Q \big[ \varphi \big] + \c (t, \omega, Q) \Big)
     \\&\leq \sup_{ \varphi \in \USA_b (\Omega, \cF_s; \bR)} \inf \Big\{ E^P \big[ \varphi \big] - E^Q \big[ \varphi \big] + \c (t, \omega, Q) \colon Q = P \text{ on } \cF_s \Big\}
     \\&= \inf \Big\{ \c (t, \omega, Q) \colon Q = P \text{ on } \cFs\Big\}. \phantom \int 
 \end{align*}
 We now establish the final remaining inequality:
\begin{align*}
     \inf \Big\{ \c (t, \omega, Q) \colon Q = P \text{ on } \cFs\Big\} \leq \sup_{ \varphi \in D_{t, s}} &\inf_{Q \in \mathfrak{P} (\Omega)} \Big( E^P \big[ \varphi \big] - E^Q \big[ \varphi \big] + \c (t, \omega, Q) \Big).
\end{align*}
 Of course, we may assume that 
 \[
\sup_{\varphi \in D_{t, s}} \inf_{Q \in \mathfrak{P} (\Omega)} \Big( E^P \big[ \varphi \big] - E^Q \big[ \varphi \big] + \c (t, \omega, Q) \Big) =: c < \infty.
 \]
For \(N \in \mathbb{N}\), set \(D^N_{t, s} := \{ \varphi \in D_{t, s} \colon \|\varphi\|_\infty \leq N \}\) and \(\mathcal{P}_N := \{ Q \in \mathfrak{P} (\Omega) \colon \c (t, \omega, Q) \leq N \}\). 
By virtue of \cite[Equation (7)]{bartl_20}, we obtain that 
\begin{align*} 
c &= \liminf_{N \to \infty} \sup_{\varphi \in D^N_{t, s}} \inf_{Q \in \mathcal{P}_{2N}} \Big( E^P \big[ \varphi \big] - E^Q \big[ \varphi \big] + \c (t, \omega, Q) \Big).
\end{align*}
Thanks to \ref{PF4} and \ref{PF5}, the set \(\cP_{2N}\) is nonempty and compact and, for every \(\varphi \in D^N_{t, s}\), the map 
\begin{align*} 
Q \mapsto E^P \big[ \varphi \big] - E^Q \big[ \varphi \big] + \c (t, \omega, Q)
\end{align*} 
is lower semicontinuous on \(\cP_{2N}\) (the map \(Q \mapsto E^Q [ \varphi ]\) is continuous on \(\cP_{2N}\) by the continuous mapping theorem, because \(\varphi\) is \(Q\)-a.s. continuous due to the absence of fixed times of discontinuity). As \(D^N_{t, s}\) and \(\cP_{2N}\) are convex (for \(\cP_{2N}\) this follows from \ref{PF3}), using the terminology from \cite{fan}, the map
\[
(\varphi, Q) \mapsto E^P \big[ \varphi \big] - E^Q \big[ \varphi \big] + \c (t, \omega, Q)
\]
is convex on \(\cP_{2N}\) and concave on \(D^N_{t, s}\). As a consequence, we can apply Fan's minimax theorem \cite[Theorem~2]{fan} to obtain that
\begin{align*}
c = \liminf_{N \to \infty} \inf_{Q \in \mathcal{P}_{2N}} \sup_{\varphi \in D^N_{t, s}} \Big( E^P \big[ \varphi \big] - E^Q \big[ \varphi \big] + \c (t, \omega, Q) \Big).
\end{align*}
Now, for every \(N \in \mathbb{N}\), there exists a measure \(Q^N \in \cP_{2N}\) such that 
\begin{align*}
\inf_{Q \in \mathcal{P}_{2N}} &\sup_{\varphi \in D^N_{t, s}} \Big( E^P \big[ \varphi \big] - E^Q \big[ \varphi \big] + \c (t, \omega, Q) \Big) 
\\&\geq \sup_{\varphi \in D^N_{t, s}} \Big( E^P \big[ \varphi \big] - E^{Q^N} \big[ \varphi \big] + \c (t, \omega, Q^N) \Big) - \frac{1}{N}.
\end{align*}
Hence, by passing to a suitable subsequence \((N_n)_{n = 1}^\infty\), we get that
\begin{equation} \label{eq: first ineq}
\begin{split}
c &\geq \liminf_{N \to \infty} \sup_{\varphi \in D^N_{t, s}} \Big( E^P \big[ \varphi \big] - E^{Q^N} \big[ \varphi \big] + \c (t, \omega, Q^N) \Big)
\\&= \lim_{n \to \infty} \sup_{\varphi \in D^{N_n}_{t, s}} \Big( E^P \big[ \varphi \big] - E^{Q^{N_n}} \big[ \varphi \big] + \c (t, \omega, Q^{N_n}) \Big).
\end{split}
\end{equation} 
Take \(M \in \mathbb{N}\) large enough such that 
\[
\sup_{n \geq M} \, \Big( \sup_{\varphi \in D^{N_n}_{t, s}} \Big( E^P \big[ \varphi \big] - E^{Q^{N_n}} \big[ \varphi \big] \Big) + \c (t, \omega, Q^{N_n}) \Big) \leq c + 1.
\]
As \(\varphi \equiv 0 \in D^{N_n}_{t, s}\), we have 
\[
\sup_{\varphi \in D^{N_n}_{t, s}} \Big( E^P \big[ \varphi \big] - E^{Q^{N_n}} \big[ \varphi \big] \Big) \geq 0 \quad\text{for all } n \in \mathbb{N}, 
\]
and consequently,
\[
\sup_{n \geq M}\, \c (t, \omega, Q^{N_n})\leq c + 1.
\]
This shows that \((Q^{N_n})_{n = M}^\infty \subset \cP_{c + 1}\). By the compactness of \(\cP_{c + 1}\), which is due to~\ref{PF5}, the sequence \((Q^{N_n})_{n = 1}^\infty\) has a weakly convergent subsequence with a limit point in~\(\cP_{c + 1}\). To ease our notation, we simply assume that \((Q^{N_n})_{n = 1}^\infty\) converges weakly to a probability measure \(Q^* \in \cP_{c + 1}\). 
By the lower semicontinuity of \(Q \mapsto \c (t, \omega, Q)\), we get that 
\begin{align} 
\lim_{n \to \infty} &\sup_{\varphi \in D^{N_n}_{t, s}} \Big( E^P \big[ \varphi \big] - E^{Q^{N_n}} \big[ \varphi \big] + \c (t, \omega, Q^{N_n}) \Big) \nonumber 
\\&\geq \liminf_{n \to \infty} \, N_n \sup_{\varphi \in D^{1}_{t, s}} \Big( E^P \big[ \varphi \big] - E^{Q^{N_n}} \big[ \varphi \big] \Big) + \c (t, \omega, Q^*) \label{eq: middel ineq}
\\&\geq \liminf_{n \to \infty} \, N_n \sup_{\varphi \in D^{1}_{t, s}} \Big( E^P \big[ \varphi \big] - E^{Q^{N_n}} \big[ \varphi \big] \Big). \nonumber
\end{align} 
Recalling \eqref{eq: first ineq}, and passing again to a subsequence that we ignore in our notation for simplicity, we observe that 
\[
\lim_{n \to \infty} \sup_{\varphi \in D^{1}_{t, s}} \Big( E^P \big[ \varphi \big] - E^{Q^{N_n}} \big[ \varphi \big] \Big) = 0.
\]
For all \(\varphi \in D^1_{t, s}\), this implies that 
\[
E^P \big[ \varphi \big] = \lim_{n \to \infty} E^{Q^{N_n}} \big[ \varphi \big] = E^{Q^{*}} \big[ \varphi \big], 
\]
where the last equality follows from the continuous mapping theorem, \(Q^* \in \cP_{c + 1}\) and the assumption that \(\c\) has no fixed times of discontinuity (which then implies that all functions in \(D^1_{t, s}\) are \(Q^*\)-a.s. continuous). 
Taking \ref{PF6} into account, and recalling that \(P ([\omega]_t) = 1\), we conclude that \(P = Q^*\) on \(\cF_s\), because \(D^1_{t, s}\) is measure determining for all probability measures on \((\Omega, \cF_s)\) that give probability one to \([\omega]_t\).
Finally, from \eqref{eq: first ineq} and \eqref{eq: middel ineq}, we get that 
\begin{align*}
    c &\geq \liminf_{n \to \infty} \, N_n \sup_{\varphi \in D^{1}_{t, s}} \Big( E^P \big[ \varphi \big] - E^{Q^{N_n}} \big[ \varphi \big] \Big) + \c (t, \omega, Q^*)
    \\&\geq \inf \Big\{ \c (t, \omega, Q) \colon Q = P \text{ on } \cF_s \Big\}.\qedhere
\end{align*}
\end{proof}

\begin{remark} \label{rem: D replace}
It is evident from the proof that Lemma~\ref{lem: rem idenity} remains valid whenever \(D_{t, s} \subset \USA_b (\Omega; \bR)\) is a convex set that has the following two properties: 
\begin{enumerate} 
\item[(i)] \(D_{t, s}\) is measure determining for all probability measures \(P\) on \((\Omega, \cF_s)\) such that \(P ([\omega]_t) = 1\).
\item[(ii)] If \(P (\Delta X_r = 0 \text{ for all } r > t) = 1\), then all functions from \(D_{t, s}\) are \(P\)-a.s. continuous.
\end{enumerate} 

Thanks to this observation, it is clear that the statement of Lemma~\ref{lem: rem idenity} remains valid when \(D_{t, s}\) is defined as
\begin{align} \label{eq: uni cont D}
D_{t, s} := \Big\{ \, g (X_{t_1}, \dots, X_{t_n}) \colon g \in UC_b (E^n; \bR), \, t_1, \dots, t_n \in (\mathbb{Q} \cap (t, s]) \cup \{s\}, \, n \in \mathbb{N}\, \Big\}, 
\end{align}
where \(UC_b (E^n; \bR)\) denotes the space of uniformly continuous functions w.r.t. a metric \(d^n\) on \(E^n\) that induces the product topology. It is also possible to take \(d^n\) totally bounded so that \(UC_b (E^n; \bR)\) is separable in the uniform topology (cf.~\cite[Lemma~6.3, p.~43]{Part_67}).
\end{remark}

As a consequence of Lemma~\ref{lem: rem idenity}, we obtain that convex expectations without fixed times of discontinuity are fully determined by their finite-dimensional marginal distributions.
\begin{theorem} \label{theo: FDD}
    Let \(\cE\) and \(\hcE\) be two convex expectations without fixed times of discontinuity. 
    Then, for every $t\in\bR$, the following are equivalent:
    \begin{enumerate}
        \item[\textup{(i)}] 
        \(\cE_t \leq \hcE_t\) on \(D_{t,\infty}\). 
        \item[\textup{(ii)}] 
    \(\cE_t \leq \hcE_t\) on \(\USA_b (\Omega; \bR)\). 
    \end{enumerate}
\end{theorem}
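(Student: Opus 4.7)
The direction (ii) $\Rightarrow$ (i) is trivial since $D_{t,\infty} \subset \USA_b(\Omega;\bR)$. For the reverse direction, my plan is to translate the hypothesis on $D_{t,\infty}$ into an inequality between the representation penalty functions provided by Theorem~\ref{theo: represntation convex}, and then feed this back through the dual representation \eqref{eq: penalty expectation}.

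Concretely, I would apply Theorem~\ref{theo: represntation convex} to obtain $\c, \hc \in \RPF$, which inherit the absence of fixed times of discontinuity from $\cE$ and $\hcE$, and such that $\cE_t(\varphi)(\omega) = \sup_P (E^P[\varphi] - \c(t,\omega,P))$ and analogously for $\hcE$ and $\hc$. The crux is then Lemma~\ref{lem: rem idenity} applied with $s = \infty$. Since $\cF_\infty = \cF$, the infimum $\inf\{\c(t,\omega,Q) : Q = P \text{ on } \cF_\infty\}$ collapses to $\c(t,\omega,P)$, and rewriting $\inf_Q(-E^Q[\varphi] + \c(t,\omega,Q)) = -\cE_t(\varphi)(\omega)$ yields, for every $\omega \in \Omega$ and every $P \in \mathfrak{P}(\Omega)$ with $P([\omega]_t) = 1$,
\[
\c(t,\omega,P) = \sup_{\varphi \in D_{t,\infty}} \bigl( E^P[\varphi] - \cE_t(\varphi)(\omega) \bigr),
\]
and the analogous identity for $\hc$ and $\hcE$.

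Assumption (i) then immediately gives $\c(t,\omega,P) \geq \hc(t,\omega,P)$ for all such $P$, since $E^P[\varphi] - \cE_t(\varphi)(\omega) \geq E^P[\varphi] - \hcE_t(\varphi)(\omega)$ termwise in the supremum. For $P$ with $P([\omega]_t) < 1$, property \ref{PF6} forces $\c(t,\omega,P) = \hc(t,\omega,P) = +\infty$, so the inequality $\c \geq \hc$ holds unconditionally. Plugging this back into the dual representations yields $\cE_t(\varphi)(\omega) \leq \hcE_t(\varphi)(\omega)$ for all $\varphi \in \USA_b(\Omega;\bR)$, which is (ii).

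I expect the only real obstacle to be bookkeeping: one must check that Lemma~\ref{lem: rem idenity} is indeed applicable at $s = \infty$ (which it is, since the statement is valid for $t < s \leq \infty$), and that both penalty functions have no fixed times of discontinuity, which is part of the equivalence in Theorem~\ref{theo: represntation convex}. Everything else is a direct manipulation of the dual representation.
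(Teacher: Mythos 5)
Your proposal is correct and follows essentially the same route as the paper: apply Theorem~\ref{theo: represntation convex} to obtain the penalty functions, use Lemma~\ref{lem: rem idenity} with \(s=\infty\) to identify \(\c(t,\omega,P)\) with \(\sup_{\varphi\in D_{t,\infty}}(E^P[\varphi]-\cE_t(\varphi)(\omega))\) for \(P([\omega]_t)=1\), deduce \(\hc\le\c\) from (i), and feed this back into the dual representation. Your explicit remark that \ref{PF6} handles the measures with \(P([\omega]_t)<1\) is a detail the paper leaves implicit, but it changes nothing of substance.
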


\begin{proof}
The implication (ii) \(\implies\) (i) is trivial. 
As for the converse direction, let \(\c\) and \(\hc\) be the representation penalty functions from Theorem~\ref{theo: represntation convex} such that 
\begin{align*}
\cE_t (\varphi) (\omega) = \sup_{P \in \mathfrak{P}(\Omega)} \Big( E^P \big[ \varphi \big] - \c (t, \omega, P) \Big), \\ 
\hcE_t (\varphi) (\omega) = \sup_{P \in \mathfrak{P}(\Omega)} \Big( E^P \big[ \varphi \big] - \hc (t, \omega, P) \Big).
\end{align*}
Using the previous equations, it is enough to show that 
$\hc (t, \omega, P)\le \c (t, \omega, P)$ for all $P \in \mathfrak{P}(\Omega)$ with \(P ([\omega]_t) = 1\). To that end, we apply Lemma~\ref{lem: rem idenity} with $s = \infty$ and $\cF_\infty = \cF$, and obtain
\begin{align*}
\sup_{\varphi \in D_{t, \infty}} \Big( E^P \left[ \varphi \right] - \cE_t (\varphi)(\omega) \Big) &=
\sup_{\varphi \in D_{t, \infty}} \inf_{Q \in \mathfrak{P}(\Omega)} \Big( E^P \left[ \varphi \right] - E^Q \left[ \varphi \right] + \c(t, \omega, Q) \Big) 
\\&= \c(t, \omega, P), \phantom \int
\end{align*}
and similarly,
\[
\sup_{\varphi \in D_{t, \infty}} \left( E^P \left[ \varphi \right] - \hcE_t (\varphi)(\omega) \right) = \hc(t, \omega, P).
\]
Hence, since $\cE_t (\varphi)(\omega) \leq \hcE_t (\varphi)(\omega)$ for all $\varphi \in D_{t, \infty}$ by hypothesis~(i),
we conclude that \(\widehat{\alpha} (t, \omega, P) \leq \alpha (t, \omega, P)\) for all \(P \in \mathfrak{P}(\Omega)\).
\end{proof} 
For future reference, we need the following version of the previous result.
\begin{remark}\label{rem:uniqueness}
Let $\Phi\colon \USA_b (\Omega; \bR)\to \USA_b (\Omega,\cF_0; \bR)$ be a map which satisfies the following properties:
\begin{enumerate}
    \item[(i)] \(\Phi (\varphi)  = \varphi \) for all \(\varphi \in \USA_b (\Omega,\cF_0; \bR)\).
		\item[(ii)]
		\( \Phi (\varphi)  \leq \Phi (\psi) \) for all \(\varphi, \psi \in \USA_b (\Omega; \bR)\) with \(\varphi \leq \psi\).
		\item[(iii)] 
		\( \Phi (\lambda \varphi + (1 - \lambda) \psi) \leq \lambda \Phi (\varphi) + (1- \lambda) \Phi (\psi)\) for all \(\varphi, \psi \in \USA_b (\Omega; \bR)\) and \(\lambda \in (0, 1)\).
  \item[(iv)] $\Phi$ is continuous from above on $C_b(\Omega;\bR)$ and continuous from below.
  \item[(v)] $\Phi( a \1_{\{ \Delta X_t\, \not =\, 0 \}} ) = 0$ for all $t > 0$ and  $a > 0$.
\end{enumerate}
Then, $\Phi$ is uniquely determined on $D_{0,\infty}$. Indeed,  by \cite[Theorem~2.6]{bartl_20}, there exists a function \(\c \colon \Omega \times \mathfrak{P} (\Omega) \to [0, \infty]\) such that 
\[
\Phi (\varphi) (\omega) = \sup_{P \in \mathfrak{P}(\Omega)} \Big( E^P \big[ \varphi \big] - \c ( \omega, P) \Big)
\]
for all $\varphi\in\USA_b(\Omega;\bR)$. It follows from the proof of Theorem~\ref{theo: represntation convex} that $\alpha$ has no fixed times of discontinuity. Hence, the claim follows by the same argumentation as in Theorem~\ref{theo: FDD}.
\end{remark}

\begin{discussion} \label{diss: NVH example}
Convex expectations from Definition~\ref{def: sublinear expectation}, or functions as discussed in Remark~\ref{rem:uniqueness}, have the appealing property of being determined by their finite-dimensional marginal distributions. However, not all nonlinear expectations have this property, as illustrated by the following example. Fix \(\Theta \subset \mathbb{S}^d_+\), where \(\mathbb{S}^d_+\) denotes the set of all symmetric positive semidefinite \(d \times d\) matrices with real entries, and set 
\begin{align*}
\mathcal{M} (\Theta) := \Big\{ P \in \mathfrak{P} (C (\bR_+; \bR^d)) \colon &\text{ the coordinate process \((X_t)_{t \geq 0}\) on \(C (\bR_+; \bR^d)\)} \\&\text{ is a \(P\)-martingale such that \(P\)-a.s. \(X_0 = 0\) } \phantom \int
\\ &\text{ and \((P\otimes \llambda)\)-a.e. } \frac{d \langle X, X\rangle_t}{dt} \in \Theta \, \Big\}. 
\end{align*}
Upper expectations of the form
\begin{align} \label{eq: G expectation}
\cE^\Theta_t (\varphi) (\omega) := \sup_{P \in \mathcal{M} (\Theta)} E^P \big[ \varphi (\omega \, \otimes_t\, X) \big], \quad \omega \in C (\bR; \bR^d), \ \varphi \in \USA_b (\Omega; \bR),
\end{align}
are called \emph{\(G\)-expectations} with domain of uncertainty \(\Theta\). In case that \(\Theta\) is a Borel set, such \(G\)-expectations satisfy \ref{NE0}-\ref{NE3} and \ref{NE6}. In particular, except for the continuity properties, \(G\)-expectations satisfy the conditions in Remark~\ref{rem:uniqueness}. 
As pointed out in \cite[Example~5.1]{NVH}, \(G\)-expectations are not determined by the class of continuous functions, and they are also not determined by their finite-dimensional marginal distributions, which is in contrast to convex expectations from Definition~\ref{def: sublinear expectation}. To repeat the Examples~5.1 and 5.2 from \cite{NVH}, let \(\Theta' = \{1, 2\}, \Theta^* = [1, 2)\) and \(\Theta = [1, 2]\). Then, noting that \(\mathcal{M}(\Theta)\) is the closed convex hull of both \(\mathcal{M}(\Theta')\) and \(\mathcal{M}(\Theta^*)\), we have \(\cE^\Theta = \cE^{\Theta'} = \cE^{\Theta^*}\) on \(C_b (\Omega; \bR)\). However, \(0 = \cE^{\Theta'}_0 (\1_A) < \cE^{\Theta}_0 (\1_A) = 1\) for \(A = \{ \int_0^\infty | d \langle X, X\rangle_t / dt - 3/2 |\, dt = 0 \}\). Similarly, with \(A = \{ \langle X, X\rangle_1 \geq 2\}\), we have \(0 = \cE^{\Theta^*}_0 (\1_A) < \cE^{\Theta}_0 (\1_A) =1\).
This example explains that the continuity properties of \(\Phi\) are cruical for Remark~\ref{rem:uniqueness}. 

To understand why \(\cE^{\Theta'}\) and \(\cE^{\Theta^*}\) are not convex expectations, notice they can be represented as in \eqref{eq: penalty representation} with 
\begin{align*} 
\alpha (t, \omega, P) = \begin{cases} 0, & P \circ (\omega \, \otimes_t \, X)^{-1} \in \mathcal{M} (\tilde{\Theta}), \\ \infty, &\text{otherwise}, \end{cases} \quad \mbox{for }\tilde{\Theta} \in\{ \Theta, \Theta^*\}.
\end{align*} 

However, these are not representing penalty functions in the sense of Definition~\ref{def: RPF}, as \(\mathcal{M} (\Theta')\) is not convex and \(\mathcal{M} (\Theta^*)\) is not closed. 
\end{discussion}

\subsection{Relation between convex expectations and convex semigroups} In this subsection, we introduce convex semigroups and show their relation to homogeneous convex expectations. By restricting them to  \(C_b (\Omega,\cF_0; \bR)\), we obtain a convex version of evolutionary semigroups as introduced in \cite{DKK24}.

\begin{definition} \label{def: SG}
A {\em convex semigroup} on \(\USA_b (\Omega; \bR)\) is
a family \((S_t)_{t \geq 0}\) of maps from \(\USA_b (\Omega; \bR)\) into itself, which satisfies the following properties:
	\begin{enumerate} [label=\textup{(S\arabic*)},ref=(S\arabic*)]
\item \label{SG0} \(S_t (\varphi)\in  \USA_b (\Omega, \cF_0; \bR)\) for all \(\varphi \in \USA_b (\Omega; \bR)\).		
  \item \label{SG2} \(S_0 (\varphi) = \varphi\) for all  \(\varphi \in \USA_b(\Omega,\cF_0; \bR)\).
		\item \label{SG3} \(S_t (\varphi) \leq S_t (\psi)\) for all \(t \in \bR_+\) and \(\varphi, \psi \in \USA_b (\Omega; \bR)\) with \(\varphi \leq \psi\).
	\item \label{SG4}\(S_t (\lambda \varphi + (1 - \lambda) \psi) \leq \lambda S_t (\varphi) + (1 - \lambda) S_t (\psi)\) 
 for all \(t \in \bR_+\),  \(\varphi, \psi \in \USA_b (\Omega; \bR)\) and \(\lambda \in (0, 1)\).
 \item \label{SG1} \(S_0 (\varphi)\) is \(\cF_{0}\)-measurable for all \(\varphi \in C_b (\Omega; \bR)\).
\item \label{SG6} \(S_{t + s} = S_t S_s\) for all \(t, s \in \bR_+\).		
  \item \label{SG5} For every \(t \in\bR_+\), the map \(S_t\) is continuous from above on $C_b (\Omega; \bR)$ and continuous from below.   		
	\end{enumerate}

	A convex semigroup \((S_t)_{t \geq 0}\) is called {\em homogeneous} if 
		\begin{align} \label{eq: SG homog}
		S_t (\varphi) = S_0 (\varphi \circ \theta_t)
		\end{align}
		for all \(t \in \bR_+\) and \(\varphi \in \USA_b (\Omega; \bR)\). 
		
		A homogeneous convex semigroup \((S_t)_{t \geq 0}\) is said to have {\em no fixed times of discontinuity} if 
		\[
		S_0 (a \1_{\{ \Delta X_s \, \not= \, 0\}}) = 0 \text{ for all } a, s > 0.
		\] 
\end{definition}

The following result clarifies the relation between homogeneous convex semigroups, homogeneous convex expectations and homogeneous representation penalty functions. 
\begin{theorem} \label{theo: semigroup}
	\begin{enumerate}
	\item[\textup{(i)}] 
	Let \(\cE\) be a homogeneous convex expectation. Then, 
	\[
	S_t (\varphi) := \cE_0 (\varphi \circ \theta_t), \quad t \in \bR_+, \, \varphi \in \USA_b (\Omega; \bR), 
	\] 
	defines a homogeneous convex semigroup. Further, \((S_t)_{t \geq 0}\) has no fixed times of discontinuity, provided that \(\cE\) has this property.
	\item[\textup{(ii)}]
	Let \((S_t)_{t \geq 0}\) be a homogeneous convex semigroup. Then, 
	\[
	\cE_t (\varphi) := S_0 (\varphi \circ \theta_{-t}) \circ \theta_{t}, \quad t \in \bR, \, \varphi \in \USA_b (\Omega; \bR), 
	\]
	defines a homogeneous convex expectation that has no fixed times of discontinuity once \((S_t)_{t \geq 0}\) has this property. Furthermore, there exists a homogeneous representation penalty function \(\c \in \RPF\) such that 
	\begin{align*}
	S_t (\varphi) (\omega) = \sup_{P \in \mathfrak{P}(\Omega)} \Big( E^P \big[ \varphi \circ \theta_t \big] - \c (0, \omega, P) \Big)
	\end{align*} 
	for all \((t, \omega) \in \bR_+ \times \Omega\) and \(\varphi \in \USA_b (\Omega; \bR)\).
\end{enumerate}
\end{theorem}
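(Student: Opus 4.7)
My plan is to verify parts (i) and (ii) by direct computation, leveraging the two-sided shift structure, the respective homogeneity assumptions, and Theorem~\ref{theo: represntation convex} for the penalty representation in (ii).

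\textbf{Plan for (i).} The properties \ref{SG0} and \ref{SG3}--\ref{SG5} of the proposed semigroup $S_t(\varphi):=\cE_0(\varphi\circ\theta_t)$ transfer directly from the corresponding axioms of the convex expectation $\cE_0$, since $\theta_t$ is continuous on $\Omega$ so that $\varphi\circ\theta_t\in\USA_b(\Omega;\bR)$ whenever $\varphi\in\USA_b(\Omega;\bR)$. Property \ref{SG2} is immediate from \ref{NE2}, and homogeneity of $(S_t)_{t\ge0}$ holds by construction. The crux is the semigroup property \ref{SG6}. Applying the homogeneity of $\cE$ in the equivalent form $\cE_0(\psi)\circ\theta_t=\cE_t(\psi\circ\theta_t)$ to $\psi=\varphi\circ\theta_s$, and combining with $\theta_s\circ\theta_t=\theta_{s+t}$ and the tower property \ref{NE6}, I obtain
\begin{align*}
S_t(S_s(\varphi))=\cE_0\bigl(\cE_0(\varphi\circ\theta_s)\circ\theta_t\bigr)=\cE_0\bigl(\cE_t(\varphi\circ\theta_{s+t})\bigr)=\cE_0(\varphi\circ\theta_{s+t})=S_{s+t}(\varphi).
\end{align*}
Absence of fixed times of discontinuity follows immediately from $S_0=\cE_0$ together with the analogous property for $\cE$.

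\textbf{Plan for (ii).} Symmetrically, I verify that $\cE_t(\varphi):=S_0(\varphi\circ\theta_{-t})\circ\theta_t$ satisfies the convex expectation axioms. Properties \ref{NE1}, \ref{NE3}, \ref{NE5} transfer from \ref{SG3}, \ref{SG4}, \ref{SG5} by composition with the continuous maps $\theta_{\pm t}$. The $\cF_t$-measurability in \ref{NE0} and \ref{NE4} follows from the identity $\theta_t^{-1}(\cF_0)=\cF_t$ combined with \ref{SG0} and \ref{SG1}. For \ref{NE2} I invoke Galmarino's test (Lemma~\ref{lem: galmarino upper semianalytic}): if $\varphi\in\USA_b(\Omega,\cF_t;\bR)$, then $\varphi\circ\theta_{-t}$ is $\cF_0$-measurable, so $S_0$ fixes it by \ref{SG2}, and composing back with $\theta_t$ recovers $\varphi$ since $\theta_{-t}\circ\theta_t=\mathrm{id}$. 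The tower property \ref{NE6}, for $s>t$, uses \ref{SG6} and the homogeneity of $S$ in the form $S_0(\psi\circ\theta_r)=S_r(\psi)$ for $r\ge 0$:
\begin{align*}
\cE_t(\cE_s(\varphi))&=S_0\bigl(S_0(\varphi\circ\theta_{-s})\circ\theta_{s-t}\bigr)\circ\theta_t=(S_{s-t}\circ S_0)(\varphi\circ\theta_{-s})\circ\theta_t\\
&=S_{s-t}(\varphi\circ\theta_{-s})\circ\theta_t=S_0(\varphi\circ\theta_{-s}\circ\theta_{s-t})\circ\theta_t=S_0(\varphi\circ\theta_{-t})\circ\theta_t=\cE_t(\varphi),
\end{align*}
where the third equality uses $S_{s-t}S_0=S_{s-t}$ by \ref{SG6}. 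The absence of fixed times of discontinuity transfers via $\1_{\{\Delta X_s\neq 0\}}\circ\theta_{-t}=\1_{\{\Delta X_{s-t}\neq 0\}}$, and homogeneity of $\cE$ is immediate from $\cE_0=S_0$ together with $\theta_{-t}\circ\theta_t=\mathrm{id}$.

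\textbf{Penalty representation.} Once $\cE$ has been established as a homogeneous convex expectation, Theorem~\ref{theo: represntation convex} directly yields a homogeneous $\c\in\RPF$ with $\cE_t(\varphi)(\omega)=\sup_P(E^P[\varphi]-\c(t,\omega,P))$. Specializing to $t=0$ and replacing $\varphi$ by $\varphi\circ\theta_t$ (using $S_t(\varphi)=S_0(\varphi\circ\theta_t)=\cE_0(\varphi\circ\theta_t)$) produces the stated formula for $S_t$.

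\textbf{Main obstacle.} The principal bookkeeping burden lies in carefully tracking the compositions with $\theta_t$ and $\theta_{-t}$ in the tower computation for \ref{NE6} and ensuring that the two applications of $S$-homogeneity are valid, i.e., that both shift arguments $s-t$ and $0$ are nonnegative (which holds since $s>t$). The most delicate measurability issue is the joint $\USA_b(\bR\times\Omega;\bR)$-property required by \ref{NE0}. I expect to handle this by first constructing the candidate penalty $\c(t,\omega,P):=\beta(\theta_t\omega,P_t)$ from Bartl's representation of $S_0$, observing that $(t,\omega,P)\mapsto(\theta_t\omega,P_t)$ is continuous, and then deducing joint upper semianalyticity of $\cE_t(\varphi)(\omega)$ as a supremum of upper semianalytic functions with compact sublevel sets in $P$ (via \ref{PF5}), along the lines of standard stochastic-control arguments.
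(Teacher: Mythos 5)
Your proposal is correct and follows essentially the same route as the paper: both identify \ref{SG6} in (i) and \ref{NE6} in (ii) as the only non-immediate properties and verify them by the identical shift computations combining homogeneity with the tower/semigroup property, and both obtain the penalty formula by invoking Theorem~\ref{theo: represntation convex} for the homogeneous \(\c\in\RPF\) and substituting \(\varphi\circ\theta_t\) at time \(0\). Your additional remarks on the routine properties (e.g.\ \(\theta_t^{-1}(\cF_0)=\cF_t\), Galmarino for \ref{NE2}) only spell out what the paper dismisses as immediate.
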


\begin{proof} (i). The only non-immediate property is \ref{SG6}.
By \ref{NE6} and \eqref{eq: cE homog}, we have
	\begin{align*}
		&S_t (S_s (\varphi)) = \cE_0 ( \cE_0 (\varphi \circ \theta_s) \circ \theta_t )
		= \cE_0 (\mathcal{E}_t (\varphi \circ \theta_{s + t})) 
		= \cE_0 (\varphi \circ \theta_{s + t}) 
		= S_{s + t} (\varphi).
	\end{align*}
	
	(ii). 
	The only non-immediate property is \ref{NE6}. Using \ref{SG6} and \eqref{eq: SG homog}, we get that
	\begin{align*}
		\cE_t (\cE_s (\varphi)) &= S_0 ( S_0 (\varphi \circ \theta_{-s}) \circ \theta_{s - t} ) \circ \theta_t 
		= S_{s - t} (S_{0} (\varphi \circ \theta_{-s})) \circ \theta_t 
		\\&= S_{s - t} (\varphi \circ \theta_{-s}) \circ \theta_t
		= S_0 (\varphi \circ \theta_{-s + s - t}) \circ \theta_t
		= \cE_t (\varphi).
	\end{align*}
	For the second claim, recall from Theorem~\ref{theo: represntation convex} that there exists a homogeneous \(\c \in \RPF\) such that 
	\begin{align*}
	 S_0 (\varphi \circ \theta_{-t} ) (\theta_t (\omega)) = \sup_{P \in \mathfrak{P}(\Omega)} \Big( E^P \big[ \varphi \big] - \c (t, \omega, P) \Big)
	\end{align*} 
for all \((t, \omega) \in \bR \times \Omega\) and \(\varphi \in \USA_b (\Omega; \bR)\). Using \eqref{eq: RPF homog} and \eqref{eq: SG homog}, we get that 
\begin{align*}
	S_t (\varphi)( \omega) &= \sup_{P \in \mathfrak{P}(\Omega)} \Big( E^P \big[ \varphi \circ \theta_{2t}\big] - \c (t, \theta_{-t} (\omega), P) \Big)
	\\
	&= \sup_{P \in \mathfrak{P}(\Omega)} \Big( E^{P_t} \big[ \varphi \circ \theta_{t}\big] - \c (0, \omega, P_t) \Big)
	= \sup_{P \in \mathfrak{P}(\Omega)} \Big( E^P \big[ \varphi \circ \theta_t \big] - \c (0, \omega, P) \Big).
\end{align*} 
This is the claimed formula.	
\end{proof}

By restricting a convex semigroup on $\USA_b (\Omega; \bR)$ to $\USA_b (\Omega,\cF_0; \bR)$, we obtain an evolutionary semigroup.
For linear semigroups, this concept was introduced in \cite{DKK24}.
\begin{definition} \label{def: evolutionary SG}
An {\em evolutionary semigroup} on \(C_b (\Omega,\cF_0; \bR)\) is a family \((T_t)_{t \geq 0}\) of functions $T_t\colon C_b (\Omega,\cF_0; \bR)\to C_b (\Omega,\cF_0; \bR)$
of the form
\[
T_t(\varphi)=\cE_0(\varphi\circ\theta_t),
\]
for some expectation operator \(\cE_0\colon \USA_b (\Omega; \bR) \to \USA_b (\Omega, \cF_0; \bR)\), which satisfy the following properties:
\begin{enumerate} [label=\textup{(T\arabic*)},ref=(T\arabic*)]
\item \label{T2} \(T_{t + s} = T_t T_s\) for all \(t, s \in \bR_+\).
\item \label{T3}
		\(\cE_0 (\varphi)  = \varphi \) for all \(\varphi \in \USA_b (\Omega,\cF_0; \bR)\).
 \item \label{T4}
		\(\cE_0 (\varphi)  \leq \cE_0 (\psi) \) for all \(\varphi, \psi \in \USA_b (\Omega; \bR)\) such that \(\varphi \leq \psi\).
\item \label{T5}
		\(\cE_0 (\lambda \varphi + (1 - \lambda) \psi) \leq \lambda \cE_0 (\varphi) + (1- \lambda) \cE_0 (\psi)\) for all \(\varphi, \psi \in \USA_b (\Omega; \bR)\) and \(\lambda \in (0, 1)\).
\item \label{T6}
	The map \(\omega \mapsto \cE_0 (\varphi) (\omega)\) is \(\cF_0\)-measurable for all \(\varphi \in C_b (\Omega; \bR)\).
\item \label{T7}
   The operator \(\cE_0\) is continuous from above on $C_b (\Omega; \bR)$ and continuous from below. 
   \end{enumerate}

An evolutionary semigroup \((T_t)_{t \geq 0}\) has {\em no fixed times of discontinuity} if 
		$\cE_0 (a \1_{\{ \Delta X_s \, \not= \, 0\}})$ $= 0$ for all  $a, s > 0$.
\end{definition}
\begin{discussion}
Let $(T_t)_{t\ge 0}$ be an evolutionary semigroup with expectation operator $\cE_0$ having no fixed times of discontinuity. 
\begin{enumerate}
\item[(a)] Property \ref{T3} implies that  $T_0(\varphi)=\varphi$ for all \(\varphi \in C_b (\Omega,\cF_0; \bR)\).
\smallskip
\item[(b)] $(S_t)_{t\ge 0}$ defined by $S_t(\varphi):=\cE_0(\varphi\circ\theta_t)$ for all \(\varphi\in\USA_b (\Omega; \bR)\) is a homogeneous convex semigroup without fixed times of discontinuities. Indeed, the only non-immediate property is \ref{SG6}. For $\varphi\in C_b(\Omega,\cF_0;\bR)$ and $r\in \bR_+$, we obtain that
\begin{align*}
S_s\big(S_t(\varphi\circ\theta_r)\big) =S_s\big(S_{t+r}(\varphi)\big)=T_s\big(T_{t+r}(\varphi)\big)=T_{s+t+r}(\varphi)=S_{s+t}(\varphi\circ\theta_r).    
\end{align*}
Since both \( S_{s+t} \) and \( S_s \circ S_t \) satisfy the properties of Remark~\ref{rem:uniqueness}, it follows from the same remark that they coincide on \( \USA_b(\Omega; \bR) \). This establishes \ref{SG6}.
\end{enumerate}
\end{discussion}

\subsection{Convex expectations arising from relaxed control rules} \label{sec: example relaxed control}

The purpose of this section is to explain how convex expectations arise naturally within a classical relaxed control framework for controlled SDEs as, for instance, considered in \cite{C_24_arxiv_dirichlet, nicole1987compactification, EKNJ88, ElKa15, HausLep90}. As explained in \cite{nicole1987compactification, ElKa15}, the relaxed framework typically leads to the same value functions as other classical weak and strong control frameworks. 
For our purpose, 
the relaxed control framework has the fundamental advantage that it is tailor made to use compactness and convexity techniques. Indeed, broadly speaking, the relaxed framework can be viewed as a compactification and convexification of other classical control frameworks. 
Before we begin our program, we note that this section does not aim for the most general treatment. Instead, we present a broad framework that illustrates our results. Extensions 
will be investigated in future work.

\smallskip 

Throughout this subsection, let \(\Omega = C (\bR; \bR^d)\) for a fixed dimension \(d \in \mathbb{N}\).
Let \(\Lambda\) be a compact metrizable space and consider three coefficients
\begin{align*}
 &\mu \colon \bR \times \Omega \times \Lambda \to \bR^d, \\ 
 &\sigma \colon \bR \times \Omega \times \Lambda \to \bR^{d \times r}, \\
 &h \colon \Lambda \to \bR_+, \phantom {\bR^d}
\end{align*}
where \(r \in \mathbb{N}\) represents the dimension of the noise. We impose the following conditions. 
\begin{SA} \label{SA: control}
    \(\mu, \sigma\) and \(h\) are bounded and continuous and \[\{\lambda \in \Lambda \colon h (\lambda) = 0\} \not = \emptyset.\]
    Furthermore, 
    for every \(\lambda \in \Lambda\), the maps \((t, \omega) \mapsto \mu (t, \omega, \lambda)\) and \((t, \omega) \mapsto \sigma (t, \omega, \lambda)\) are \((\cF_t)\)-predictable,
    i.e.,  \(\mu (t, \omega, \lambda)\) and \(\sigma (t, \omega, \lambda)\) depend on \(\omega\) only through \((\omega (s))_{s < t}\). 
\end{SA}

In the relaxed control setting, controls are modeled as measures on the space \(\m\) of all Radon measures on \(\bR \times \Lambda\), whose projections to \(\bR\) coincide with the Lebesgue measure. 
We endow \(\m\) with the vague topology, which turns it into a compact metrizable space; see \cite[Theorem I.2.2]{EKNJ88}. 
The coordinate map on \(\m\) is denoted by \(M\).
Define the \(\sigma\)-field \[\M := \sigma (M_{t, s} (\phi)\colon \, t < s,\, \phi \in C_{c} (\bR \times \Lambda; \bR)),\] where
\[
M_{t, s} (\phi) := \int_t^s \int_\Lambda \phi (r, \lambda) \, M(dr, d\lambda).
\]
The product space \((\Omega \times \m, \cF \otimes \M)\) captures the controlled process together with its control. Adapting our above notation, we denote the coordinate process on this space by \((X, M)\). 
For \(\varphi \in C^2_b (\bR^d; \bR)\), we define
\begin{align} \label{eq: C}
N_{t, s} (\varphi) := \varphi (X_s) - \int_{t}^{s} \int_\Lambda  L (X_r, r, X, \lambda, \varphi ) \, M (dr , d \lambda), 
\end{align}
where
\begin{align} \label{eq: L}
L (x, s, \omega, \lambda, \varphi) := \langle \mu (s, \omega, \lambda), \nabla \varphi (x) \rangle + \tfrac{1}{2} \on{tr} \big[ \sigma (s, \omega, \lambda) \sigma^* (s, \omega, \lambda) \, \nabla^2 \varphi (x) \big].
\end{align}
A relaxed control rule with initial value \((t, \omega) \in \bR \times \Omega\) is a probability measure \(P\) on the product space \((\Omega \times \m, \mathcal{F} \otimes \M)\) with
\[P (X = \omega \text{ on } (- \infty, t]) = 1,\] 
such that for every  \(\varphi \in C^2_b (\bR^d; \bR)\), the process \((N_{t, s} (\varphi))_{s \geq t}\) is a \(P\)-martingale for the filtration
\[
\cG_r := \sigma \big(X_s, M_{t, s} (\phi)\colon\, t \leq s \leq r,\, \phi \in C_c (\bR \times \Lambda; \bR)\big), \quad r \geq t.
\]
For \((t, \omega) \in \bR \times \Omega\), we define 
\begin{align} \label{eq: KR}
	\cK (t, \omega)&:= \Big\{ \text{all relaxed control rules with initial value } (t, \omega) \Big \}.
\end{align}

Next, we collect some properties of the set \(\cK\) of relaxed control rules. For the definition of upper hemicontinuity of a correspondence, we refer to \cite[Definition~17.2]{charalambos2013infinite}; see also~\cite[Section~17.3]{charalambos2013infinite} for a useful characterization in terms of sequences. The proof follows the well-trotted path for the one-sided time interval; see \cite{CN_23_arxiv_jumps, nicole1987compactification}.
\begin{lemma} \label{lem: properties relaxed controls}
The correspondence \(\cK\) is upper hemicontinuous with nonempty compact convex values. In particular, it has a Borel measurable graph.
\end{lemma}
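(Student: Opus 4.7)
The plan is to follow the well-known compactification argument for relaxed controls, adapted to our two-sided path space. All four properties---nonemptiness, convexity, compactness, upper hemicontinuity---are established by the same basic ingredients: tightness from boundedness of the coefficients together with compactness of $\m$, and stability of the martingale characterization \eqref{eq: C} under weak limits.

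First, I would dispose of \emph{convexity}, which is immediate: both the constraint $P(X = \omega \text{ on } (-\infty, t]) = 1$ and the requirement that $(N_{t,s}(\varphi))_{s \geq t}$ is a $P$-martingale in $(\cG_r)$ are linear in $P$, so any convex combination of elements of $\cK(t, \omega)$ again lies in $\cK(t, \omega)$. \emph{Nonemptiness} comes from picking any $\lambda_0 \in \Lambda$ and considering the associated Dirac-in-control initial problem: the martingale problem with path-dependent bounded continuous coefficients $\mu(\cdot, \cdot, \lambda_0)$ and $\sigma(\cdot, \cdot, \lambda_0)$ admits a solution $\widetilde P$ on $(\Omega, \cF)$ starting from $\omega$ at time $t$ (standard existence via Skorokhod embedding and weak-limit arguments for continuous coefficients), and then $P := \widetilde P \otimes \delta_{\lambda_0 \otimes \llambda}$ lies in $\cK(t, \omega)$, where the second factor is the product of the Dirac at $\lambda_0$ with Lebesgue measure on $\bR$.

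Next comes \emph{compactness} (plus closedness) of $\cK(t, \omega)$. Since $\m$ is compact metrizable, tightness of any family $\{P_n\} \subset \cK(t, \omega)$ on $\Omega \times \m$ reduces to tightness of the $X$-marginals. By the martingale property applied to coordinate functions and their squares, combined with the uniform boundedness of $\mu$ and $\sigma\sigma^*$, one gets uniform moment and modulus-of-continuity estimates on $X$ (Aldous-type criterion; see, e.g., \cite{EKNJ88,nicole1987compactification}), yielding tightness on $C(\bR; \bR^d)$ with the local uniform topology. For closedness, suppose $P_n \in \cK(t, \omega)$ converges weakly to $P$. The constraint $\{X = \omega \text{ on } (-\infty, t]\}$ is closed, so it is preserved in the limit. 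For the martingale property, fix $\varphi \in C^2_b(\bR^d; \bR)$ and $s > r > t$; for any bounded continuous $\cG_r$-measurable test functional $H$, the map
\[
(\omega', M') \mapsto \Big(N_{t, s}(\varphi) - N_{t, r}(\varphi)\Big)(\omega', M') \cdot H(\omega', M')
\]
is bounded and continuous on $\Omega \times \m$: continuity in $\omega'$ of the integrand $L(X_r, r, X, \lambda, \varphi)$ in \eqref{eq: L} follows from Assumption~\ref{SA: control} and the fact that $\mu, \sigma$ depend on $\omega'$ predictably and continuously, while continuity in $M'$ is built into the definition of the vague topology on $\m$ (the integrand $(r, \lambda) \mapsto L(X_r, r, X, \lambda, \varphi)$ is continuous and bounded on $[t, s] \times \Lambda$ for fixed $(X, \omega')$). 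Passing to the limit in $E^{P_n}[\cdots] = 0$ gives the martingale property under $P$, so $P \in \cK(t, \omega)$.

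Finally, for \emph{upper hemicontinuity}, take $(t_n, \omega_n) \to (t, \omega)$ and $P_n \in \cK(t_n, \omega_n)$; the exact same tightness argument works uniformly in $n$ (the bounds depend only on $\|\mu\|_\infty, \|\sigma\|_\infty$ and on $\omega_n$ locally, which is uniformly controlled along the convergent sequence), and the weak-limit argument above adapts verbatim once one notices that the initial-value constraint also passes to the limit because $(t_n, \omega_n) \to (t, \omega)$ in $\bR \times \Omega$ and $X = \omega_n$ on $(-\infty, t_n]$ converges to $X = \omega$ on $(-\infty, t]$ on the continuous path space. This gives upper hemicontinuity via the sequential characterization \cite[Section~17.3]{charalambos2013infinite}. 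The measurable graph then follows: an upper hemicontinuous correspondence with compact (hence closed) values from a metric space into a metrizable space has a closed graph \cite[Theorem~17.11]{charalambos2013infinite}, and closed subsets of the Polish space $\bR \times \Omega \times \mathfrak{P}(\Omega \times \m)$ are Borel.

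The main obstacle is the weak-limit step for the martingale property in the setting of path-dependent coefficients on a two-sided time index: one has to verify that the integrand $L(X_r, r, X, \lambda, \varphi)$ really is jointly continuous on $\Omega \times [t, s] \times \Lambda$, which hinges crucially on the continuity of $\mu, \sigma$ in $(t, \omega, \lambda)$ from Assumption~\ref{SA: control} together with the local uniform topology on $\Omega$; after that, every other step is standard.
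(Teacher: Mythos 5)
Your proposal is correct and follows essentially the same route as the paper: convexity from linearity of the martingale constraints, nonemptiness via Skorokhod's existence theorem, and upper hemicontinuity with compact values via the sequential criterion (tightness of the $X$-marginals plus compactness of $\m$, then stability of the martingale problem under weak limits). The only cosmetic differences are that the paper invokes Kolmogorov's tightness criterion with a Burkholder--Davis--Gundy moment bound where you use an Aldous-type criterion, and it deduces the Borel graph from weak measurability of upper hemicontinuous correspondences rather than from the closed graph theorem; both variants are valid.
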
 
\begin{proof} 
	We only sketch the main steps of the proof; see \cite{CN_23_arxiv_jumps} for more details. Take \((t, \omega) \in \bR \times \Omega\). \(\cK (t, \omega) \not = \emptyset\) is a direct consequence of Skorokhod's existence theorem (cf. \cite[Theorem~4, p.~265]{Skorokhod}) and its convexity is evident by fact that the set of solutions to martingale problems is convex.
	
	By \cite[Theorem~17.20]{charalambos2013infinite}, \((t, \omega) \mapsto \cK (t, \omega)\) is upper hemicontinuous with compact values if for every sequence \((t^n, \omega^n)_{n \in\N_0} \subset \bR \times \Omega\) such that \((t^n, \omega^n) \to (t^0, \omega^0)\), every sequence \((P^n)_{n \in\N }\) with \(P^n \in \cK (t^n, \omega^n)\) has an accumulation point in \(\cK (t^0, \omega^0)\). To prove this, one first shows that the sequence \((P^n)_{n \in \mathbb{N}}\) is tight. This follows from Kolmogorov's tightness criterion (cf. \cite[Theorem~23.7]{kallenberg}) and a moment estimate based on the Burkholder--Davis--Gundy inequality in a straightforward manner. Second, assuming that \(P^n \to P^0\), one needs to show that \(P^0 \in \cK (t^0, \omega^0)\). This can be proved by standard martingale problem arguments (cf., e.g., \cite[Proposition~IX.5.22]{JS} for details). 

Finally, the measurable graph follows from \cite[Theorem~18.6]{charalambos2013infinite} and the fact that upper hemicontinuous correspondences are weakly measurable.
\end{proof}

Next, we introduce a penalty function on the product space \(\Omega \times \m\). To do so,
for \((t, \omega, P) \in \bR \times \Omega \times \mathfrak{P} (\Omega \times \m)\), we define
\[
\hat{\alpha} (t, \omega, P) := \begin{cases} E^P \big[ \int_t^\infty \hspace{-0.1cm}\int_\Lambda h (\lambda)\, M(ds, d \lambda) \big], & P \in \cK (t, \omega), \\ + \, \infty, & P \not \in \cK (t, \omega). \end{cases} 
\]
Further, for \((t, \omega, \varphi) \in \bR \times \Omega \times \USA_b (\Omega; \bR)\), we set
\begin{align*}
    \cE_t (\varphi) (\omega) := \sup_{P \in \mathfrak{P} (\Omega \times \m)} \Big( E^P [ \varphi (X)] - \hat{\alpha} (t, \omega, P) \Big).
\end{align*}
Equivalently, \(\cE\) has the representation
\begin{align} \label{eq: control rep}
    \cE_t (\varphi) (\omega) = \sup_{P \in \cK (t, \omega)} E^P \Big[ \, \varphi (X) - \int_t^\infty \hspace{-0.1cm}\int_\Lambda h (\lambda) \, M(ds, d \lambda)\, \Big], 
\end{align}
which is the value function corresponding to a relaxed control problem. 
The following theorem clarifies that \(\cE\) is a convex expectation on the path space \(\Omega\). 
\begin{theorem} \label{theo: CE control setting}
    \(\cE\) is a convex expectation in the sense of Definition~\ref{def: sublinear expectation}. Furthermore, the corresponding representation penalty function \(\alpha\in \RPF\) is given by the duality formula
    \[
    \alpha (t, \omega, P) = \sup_{\varphi \in C_b (\Omega; \bR)} \Big( E^P \big[ \varphi \big] - \cE_t (\varphi) (\omega) \Big) 
    \]
    for \(P \in \mathfrak{P}(\Omega)\) and \((t, \omega) \in \bR \times \Omega\). In case \(h \equiv 0\), the penalty \(\alpha\) is given by
    \[
    \alpha (t, \omega, P) = \begin{cases} 0, & P \in \{ Q \circ X^{-1} \colon Q \in \cK (t, \omega)\}, \\ + \infty, &P \not \in \{ Q \circ X^{-1} \colon Q \in \cK (t, \omega) \}. \end{cases} 
    \]
\end{theorem}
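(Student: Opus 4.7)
The plan is to verify axioms \ref{NE0}--\ref{NE5} of Definition~\ref{def: sublinear expectation} for the operator $\cE$ defined by \eqref{eq: control rep}, and then to invoke Theorem~\ref{theo: represntation convex} to extract the representation penalty function $\alpha$ in the claimed dual form. The key structural inputs are Lemma~\ref{lem: properties relaxed controls} (compactness, convexity, upper hemicontinuity, and Borel graph of $\cK$), the assumption in \ref{SA: control} that $\{h = 0\} \neq \varnothing$ (which produces zero-cost relaxed rules via Skorokhod's existence theorem), and the stability of the relaxed martingale problem defining $\cK$ under both conditioning along $(\cG_r)$ and measurable pasting at a fixed time.

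The routine properties are addressed as follows. Upper semianalyticity \ref{NE0} follows from the Borel graph of $\cK$ combined with the projection theorem for analytic sets and the joint Borel measurability, for continuous $\varphi$, of $(t,\omega,P) \mapsto E^P[\varphi(X) - \int_t^\infty \int_\Lambda h(\lambda)\,M(ds,d\lambda)]$. For \ref{NE2}: every $P \in \cK(t,\omega)$ places full mass on $\{X = \omega \text{ on } (-\infty,t]\}$, so by Lemma~\ref{lem: galmarino upper semianalytic} we get $E^P[\varphi(X)] = \varphi(\omega)$ for $\cF_t$-measurable $\varphi$, and any rule with control concentrated on $\lambda_0 \in \{h=0\}$ achieves the supremum. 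Monotonicity \ref{NE1} and convexity \ref{NE3} are immediate from the sup/linear-integrand structure. Property \ref{NE4} and continuity from above in \ref{NE5} follow from a Berge-type maximum theorem using the compact-valued upper hemicontinuity of $\cK$; continuity from below in \ref{NE5} is a standard monotone convergence argument.

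The heart of the proof is the tower property \ref{NE6}. For $\cE_t(\varphi) \leq \cE_t(\cE_s(\varphi))$: given $P \in \cK(t,\omega)$, paste its restriction to $[t,s]$ with a zero-cost continuation concentrated on $\lambda_0$ to obtain $\tilde P \in \cK(t,\omega)$ with $E^{\tilde P}[\int_s^\infty \int h\,dM] = 0$ and the same law as $P$ on $\cG_s$; combined with the conditional inequality $E^P[\varphi(X) - \int_s^\infty \int h\,dM \mid \cG_s] \leq \cE_s(\varphi)(X)$ (stemming from $P(\cdot\mid\cG_s) \in \cK(s,X)$), this yields the bound after integrating against $P$ and using that $\cE_s(\varphi)$ is $\cF_s$-measurable. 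For the reverse direction, given $R \in \cK(t,\omega)$ and $\varepsilon > 0$, the Borel graph of $\cK$ together with the upper semianalyticity of $\cE_s(\varphi)$ and the Jankov--von Neumann theorem (cf.~\cite[Proposition~7.50]{bershre}) furnish an analytically measurable $\varepsilon$-optimal selector $\omega' \mapsto Q_{\omega'} \in \cK(s,\omega')$; the pasting $R \otimes_s Q$ lies in $\cK(t,\omega)$ by the standard stability of the relaxed martingale problem under measurable pasting (cf.~\cite{CN_23_arxiv_jumps}), and a direct computation gives $\cE_t(\varphi)(\omega) \geq \cE_t(\cE_s(\varphi))(\omega) - \varepsilon$.

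Once \ref{NE0}--\ref{NE5} are verified, Theorem~\ref{theo: represntation convex} provides the representation penalty function $\alpha \in \RPF$, and the claimed duality formula for $\alpha$ is precisely the one produced in that proof. For $h \equiv 0$, \eqref{eq: control rep} reduces to $\sup_{P\in \cK(t,\omega)} E^P[\varphi(X)]$, which is the support function of the set $\cK_X(t,\omega) := \{Q \circ X^{-1} : Q \in \cK(t,\omega)\} \subset \mathfrak{P}(\Omega)$; this set is convex (image of a convex set under a linear map) and weakly closed (continuous image of the compact $\cK(t,\omega)$), so Fenchel--Moreau duality in the pairing $(\mathfrak{P}(\Omega), C_b(\Omega;\bR))$ identifies the conjugate with the indicator of $\cK_X(t,\omega)$, giving the stated formula. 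The main obstacle is the execution of the measurable pasting in \ref{NE6}, in particular the zero-cost-continuation trick used to close the integral-from-$s$-to-$\infty$ gap in the direction $\cE_t(\varphi) \leq \cE_t(\cE_s(\varphi))$.
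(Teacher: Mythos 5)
Your proposal is correct and follows essentially the same route as the paper: verification of \ref{NE0}--\ref{NE5} from the compactness, convexity and measurable-graph properties of $\cK$ together with the zero-cost rules supplied by $\{h=0\}\neq\emptyset$, the zero-cost-continuation pasting to close the $\int_t^s$ versus $\int_t^\infty$ gap in \ref{NE6}, and Bartl's duality (via Theorem~\ref{theo: represntation convex}) for the penalty formula. The only difference is cosmetic: you re-derive the dynamic programming principle by hand through conditioning and a Jankov--von Neumann $\varepsilon$-optimal selector, where the paper simply cites \cite[Theorem~3.4]{ElKa15}, and you spell out the Fenchel--Moreau step for the $h\equiv 0$ formula that the paper calls immediate.
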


\begin{proof}
We start with a preparatory fact. Let \(\lambda_0 \in \Lambda\) be such that \(h (\lambda_0) = 0\).
Thanks to Standing Assumption~\ref{SA: control}, and Skorokhod's existence theorem (cf. \cite[Theorem~4, p.~265]{Skorokhod}), there exists a \(P_0 \in \cK (t, \omega)\) such that \(P_0\)-a.s. \(M (ds, d \lambda) = ds \, \delta_{\lambda_0}(d \lambda)\). 

Recalling the representation \eqref{eq: control rep}, since \(\cK\) has a Borel graph by Lemma~\ref{lem: properties relaxed controls}, it follows from \cite[Proposition~7.47]{bershre} that \(\cE\) maps \(\USA_b (\Omega; \bR)\) to upper semianalytic functions.
That \(\cE (\varphi)\) is bounded for \(\varphi \in \USA_b (\Omega; \bR)\) follows from the existence of \(P_0\) as explained above, because it entails that
\begin{align} \label{eq: inf zero} 
\inf_{P \in \cK^R (t, \omega)} E^P \Big[ \int_t^\infty \hspace{-0.1cm} \int_\Lambda h (\lambda) \, M(ds, d \lambda)\Big] = 0.
\end{align} 
Further, \ref{NE0} follows from Lemma~\ref{lem: galmarino upper semianalytic} and the fact that, by definition, \(\cK (t, \omega)\) depends on \(\omega\) only through \((\omega (s))_{s \leq t}\).
The properties \ref{NE1} and \ref{NE3} are clearly satisfied. 

To see that \ref{NE2} holds, notice that, for \(\varphi\in \USA_b (\Omega, \cF_t; \bR)\), by \eqref{eq: control rep}, Lemma~\ref{lem: galmarino upper semianalytic} and \eqref{eq: inf zero}, we have
\[
\cE_t (\varphi) (\omega) = \varphi (\omega) - \inf_{P \in \cK (t, \omega)} E^P \Big[ \int_t^\infty \hspace{-0.1cm} \int_\Lambda h (\lambda) \, M(ds, d \lambda)\Big] = \varphi (\omega).
\]

Next, to show \ref{NE4}, for \(t \in \bR\) and \(\varphi \in C_b (\Omega; \bR)\), notice that \(\omega \mapsto \cE_t (\varphi) (\omega)\) is measurable by the measurable maximum theorem (cf. \cite[Theorem 18.19]{charalambos2013infinite}) and Lemma~\ref{lem: properties relaxed controls} (recall that upper hemicontinuity entails weak measurability for correspondences).
Now, \ref{NE4} follows from Galmarino's test (cf. \cite[Theorem~IV.96]{dellacheriemeyer}). 

As explained in the proof of \cite[Theorem~2.6]{bartl_20}, \ref{NE5} holds once \(P \mapsto \hat{\alpha}(t, \omega, P)\) is convex with nonempty compact level sets. Thus, as \(\cK\) has nonempty convex compact values (cf.~Lemma~\ref{lem: properties relaxed controls}), it suffices to understand that \(P \mapsto \hat{\alpha} (t, \omega, P)\) is lower semicontinuous. This follows from \cite[Theorem~A.3.12]{dupuisellis}, which states that maps of the form \(\mu \mapsto \int f d \mu\) are lower semicontinuous whenever \(f\) with values in \([0, \infty]\) is lower semicontinuous. We emphasize at this point that convexity and lower semicontinuity of \(P \mapsto \hat{\alpha}(t, \omega, P)\) are consequences of the relaxed control framework.

Finally, we need to prove the tower property \ref{NE6}. By the dynamic programming principle as given in \cite[Theorem~3.4]{ElKa15} (extended in the obvious manner to the two-sided time setting), for \(s > t\), \(\omega \in \Omega\) and \(\varphi \in \USA_b (\Omega; \bR)\), we have 
\begin{align*}
    \cE_t (\varphi) (\omega) = \sup_{P \in \cK (t, \omega)} E^P \Big[ \, \cE_s (\varphi) (X) - \int_t^s \hspace{-0.1cm} \int_\Lambda h (\lambda) \, M(dr, d\lambda) \, \Big]. 
\end{align*}
With little abuse of notation, let \(P \otimes_s Q \in \cK (t, \omega)\) be such that \(P \otimes_s Q = P\) on \(\mathcal{G}_s\) and \(P\)-a.s. \(\1_{(s, \infty)} (r) \, M (dr, d \lambda) = \1_{(s, \infty)} (r) \, dr \delta_{\lambda_0} (d \lambda)\), where \(\lambda_0 \in \Lambda\) satisfies \(h(\lambda_0) = 0\). By virtue of Standing Assumption~\ref{SA: control} and Skorokhod's existence theorem, it is clear that \(P \otimes_s Q\) exists, as \(\cK\) is stable under pasting (cf. \cite[Lemma~3.3]{ElKa15}). As a consequence, 
\begin{align*}
    \sup_{P \in \cK (t, \omega)} E^P  \Big[ \, \cE_s &(\varphi) (X) - \int_t^s \hspace{-0.1cm} \int_\Lambda h (\lambda) \, M(dr, d\lambda) \, \Big] 
    \\
    &= \sup_{P \in \cK (t, \omega)} E^{P\, \otimes_s\, Q} \Big[ \cE_s (\varphi) (X) - \int_t^s \hspace{-0.1cm} \int_\Lambda h (\lambda) \, M(dr, d\lambda) \, \Big]
\\
&= \sup_{P \in \cK (t, \omega)} E^{P\, \otimes_s\, Q} \Big[ \cE_s (\varphi) (X) - \int_t^\infty \hspace{-0.1cm} \int_\Lambda h (\lambda) \, M(dr, d\lambda) \, \Big]
\\
&\leq \sup_{P \in \cK (t, \omega)} E^{P} \Big[ \cE_s (\varphi) (X) - \int_t^\infty \hspace{-0.1cm} \int_\Lambda h (\lambda) \, M(dr, d\lambda) \, \Big] 
\\&= \cE_t (\cE_s (\varphi))(\omega). \phantom {\int^A}
\end{align*}
Because \(h \geq 0\), we obviously have 
\begin{align*}
\sup_{P \in \cK (t, \omega)} E^P \Big[ \, \cE_s  &(\varphi) (X) - \int_t^s \hspace{-0.1cm} \int_\Lambda h (\lambda) \, M(dr, d\lambda) \, \Big] 
\\
&\geq \sup_{P \in \cK (t, \omega)} E^P \Big[ \, \cE_s (\varphi) (X) - \int_t^\infty \hspace{-0.1cm} \int_\Lambda h (\lambda) \, M(dr, d\lambda) \, \Big]
= \cE_t (\cE_s (\varphi))(\omega), \phantom {\int^A}
\end{align*}
and therefore, \(\cE_t (\cE_s (\varphi)) (\omega) = \cE_t (\varphi)(\omega)\). This shows that \ref{NE6} holds. 

The duality formula follows from \cite[Remark~2.7]{bartl_20} and the formula for the case \(h \equiv 0\) is immediate by virtue of \eqref{eq: control rep}.
\end{proof}

Finally, we provide conditions under which the convex expectation \(\cE\) is homogeneous. 

\begin{theorem} \label{theo: CE relax homogeneous}
       Suppose that \(h \equiv 0\) and 
       \begin{align} \label{eq: homogeneous hypo}
       \mu (s + t, \omega, \lambda) = \mu (s, \theta_t (\omega), \lambda), \quad \sigma (s + t, \omega, \lambda) = \sigma (s, \theta_t(\omega), \lambda)
       \end{align} 
       for all \(s, t \in \bR\), \(\omega \in \Omega\) and \(\lambda \in \Lambda\). Then, the convex expectation \(\cE\) in Theorem~\ref{theo: CE control setting} is homogeneous. 
\end{theorem}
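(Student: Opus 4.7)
The plan is to construct an explicit shift-bijection between $\cK(t,\omega)$ and $\cK(0,\theta_t(\omega))$ on the product space $\Omega\times\m$ and to read off homogeneity directly from the supremum representation. Since $h\equiv 0$, the control representation \eqref{eq: control rep} reduces to
\[
\cE_t(\varphi)(\omega)=\sup_{P\in\cK(t,\omega)}E^P[\varphi(X)],
\]
so the proof amounts to relating the two sets of control rules via a time shift.

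First, I would extend the shift to the control space: for $t\in\bR$ and $m\in\m$, define $m_t\in\m$ by
\[
\int\phi(r,\lambda)\,m_t(dr,d\lambda)=\int\phi(r-t,\lambda)\,m(dr,d\lambda),\quad \phi\in C_c(\bR\times\Lambda;\bR).
\]
Since $m_t$ has Lebesgue projection on $\bR$, it lies in $\m$, and the map $m\mapsto m_t$ is a homeomorphism in the vague topology. Combined with $\theta_t$ on $\Omega$, this yields a homeomorphism $\Theta_t\colon\Omega\times\m\to\Omega\times\m$, $\Theta_t(\omega',m):=(\theta_t(\omega'),m_t)$, with inverse $\Theta_{-t}$. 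For $P\in\mathfrak{P}(\Omega\times\m)$, set $P^t:=P\circ\Theta_t^{-1}$.

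The heart of the argument is the claim that $P\mapsto P^t$ sends $\cK(t,\omega)$ bijectively onto $\cK(0,\theta_t(\omega))$. The initial condition transforms trivially: $P(X=\omega\text{ on }(-\infty,t])=1$ iff $P^t(X=\theta_t(\omega)\text{ on }(-\infty,0])=1$. The delicate point is the martingale property, where hypothesis \eqref{eq: homogeneous hypo} enters. Since $\mu$ and $\sigma$ are shift-invariant, the operator $L$ of \eqref{eq: L} satisfies $L(x,r,\theta_t(\omega'),\lambda,\varphi)=L(x,r+t,\omega',\lambda,\varphi)$. Together with the substitution rule for $m_t$, this gives $N_{0,s}(\varphi)\circ\Theta_t=N_{t,s+t}(\varphi)$ on $\Omega\times\m$. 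An analogous computation shows that the generators $X_u$ and $M_{0,u}(\phi)$ of $\cG_u$ (relative to base point $0$) pull back under $\Theta_t$ to $\cG_{u+t}$-measurable variables (relative to base point $t$). Hence $(N_{0,s}(\varphi))_{s\ge 0}$ is a $P^t$-martingale iff $(N_{t,s}(\varphi))_{s\ge t}$ is a $P$-martingale, which proves the claim.

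Granted this bijection, a direct change of variables yields
\begin{align*}
\cE_t(\varphi)(\omega)&=\sup_{P\in\cK(t,\omega)}E^P[\varphi(X)]=\sup_{P\in\cK(t,\omega)}E^{P^t}\bigl[(\varphi\circ\theta_{-t})(X)\bigr]\\
&=\sup_{Q\in\cK(0,\theta_t(\omega))}E^Q\bigl[(\varphi\circ\theta_{-t})(X)\bigr]=\cE_0(\varphi\circ\theta_{-t})(\theta_t(\omega)),
\end{align*}
which is precisely \eqref{eq: cE homog}. The main obstacle is the bookkeeping of the three simultaneous shifts—in the drift and diffusion coefficients, in the Radon measure $M$, and in the filtration $(\cG_r)$—and verifying that they are mutually compatible; this is exactly where hypothesis \eqref{eq: homogeneous hypo} is used. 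The assumption $h\equiv 0$ eliminates the running-cost term from the supremum, which simplifies the argument, although a similar proof would go through whenever the cost is time-homogeneous.
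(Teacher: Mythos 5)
Your proposal is correct and follows essentially the same route as the paper: reduce to the supremum representation via $h\equiv 0$, lift the shift to the product space $\Omega\times\m$, show it carries $\cK(t,\omega)$ bijectively onto $\cK(0,\theta_t(\omega))$ by transforming the martingale-problem processes $N$ using \eqref{eq: homogeneous hypo}, and conclude by a change of variables in the supremum. The only cosmetic difference is that you define the shift on $\m$ directly as a pushforward of the Radon measure, whereas the paper first disintegrates $M(ds,d\lambda)=m_s(d\lambda)\,ds$ via a predictable kernel and shifts the kernel; the two constructions coincide.
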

\begin{proof}
Since \(h \equiv 0\), we have 
\begin{align*}
    \cE_t (\varphi) (\omega) = \sup_{P \in \cK (t, \omega)} E^P \big[ \varphi (X) \big].
\end{align*}
As a consequence, \(\cE\) is homogeneous once 
\begin{align} \label{eq: K shift identity projection}
\Big\{ P \circ (X_{s})_{s \geq 0}^{-1} \colon P \in \cK (0, \theta_t (\omega)) \Big\} = \Big\{ P \circ (X_{s + t})_{s \geq 0}^{-1} \colon P \in \cK (t, \omega) \Big\} \subset \mathfrak{P} (\Omega)
\end{align} 
for all \((t, \omega) \in \bR \times \Omega\), see also Theorem~\ref{theo: represntation} below. In the following we prove this identity.
By a version of \cite[Lemma 3.2]{LakSPA15}, there exists a \((\mathcal{G}_t)\)-predictable probability kernel \(m \colon \bR \times  \m \times \mathcal{B}(\Lambda) \to \mathbb{R}_+\) such that 
\begin{align*} 
M (ds, d \lambda) = m_s (d \lambda) (M)  ds.
\end{align*} 
For \(t \in \bR\), we define the shift operator \(\bar{\theta}_t \colon \Omega \times \m \to \Omega \times\m\) by 
\[
(X, m_r (d \lambda) dr) (\bar{\theta}_t ( \overline{\omega})) := (X_{\,\cdot\, + t}, m_{r + t} (d \lambda) dr) (\overline{\omega}).
\]
With this definitions at hand, we claim that 
\begin{align} \label{eq: K shift identity control}
\cK (0, \theta_t (\omega)) = \Big\{ P \circ {\bar{\theta}}_t^{-1} \colon P \in \cK (t, \omega) \Big\}.
\end{align} 
Projecting this identity to \((X_s)_{s \geq 0}\), we immediately get \eqref{eq: K shift identity projection}. 
To prove \eqref{eq: K shift identity control}, we first establish a general fact: for \(t, t' \in \bR\) and \(\omega \in \Omega\), 
\begin{align} \label{eq: important fact} 
P \in \cK (t, \omega) \implies P \circ \bar{\theta}_{t'}^{-1} \in \cK (t - t', \theta_{t'} (\omega)).
\end{align} 
Notice that  
\(
P \circ \bar{\theta}^{-1}_{t'} ([ \theta_{t'} (\omega) ]_{t - t'}) = P ([\omega]_t) = 1.
\)
Further, with \eqref{eq: C} and \eqref{eq: L}, using \eqref{eq: homogeneous hypo}, for \(r > t\), we obtain that 
\begin{align*}
    N_{t, r} (\varphi) 
    &= \varphi (X_{r}) - \int_{t - t'}^{r - t'} \hspace{-0.1cm} \int_\Lambda \Big( \langle \mu (s + t', X, \lambda), \nabla \varphi (X_{s + t'}) \rangle 
    \\&\hspace{3cm}+ \tfrac{1}{2} \on{tr} \big[ \sigma \sigma^* (s + t', X, \lambda)\, \nabla^2 \phi (X_{s + t'}) \big] \Big) m_{s + t'} (M) (d \lambda) ds \phantom {\int^t}
    \\&= \varphi (\bar{\theta}_{t'} (X_{r - t'})) - \int_{t - t'}^{r - t'} \hspace{-0.1cm} \int_\Lambda \Big( \langle \mu (s, \bar{\theta}_{t'} (X), \lambda), \nabla \varphi (\bar{\theta}_{t'} (X_s)) \rangle 
    \\&\hspace{3cm}+ \tfrac{1}{2} \on{tr} \big[ \sigma \sigma^* (s, \bar{\theta}_{t'} (X), \lambda)\, \nabla^2 \phi (\bar{\theta}_{t'} (X_s)) \big] \Big) m_{s} (\bar{\theta}_{t'} (M)) (d \lambda) ds \phantom {\int^t}
    \\&= N_{t - t', r - t'} (\varphi) \circ \bar{\theta}_{t'}. \phantom {\int^t}
\end{align*}
From this formula, it follows that \((N_{t - t', r})_{r \geq t - t'}\) is a \(P \circ \bar{\theta}_t^{-1}\)-martingale for the filtration \((\cG_r)_{r \geq t - t'}\) (cf. \cite[Chapter 10.2]{jacod79}). Hence, \(P \circ \bar{\theta}^{-1}_{t'} \in \cK (t - t', \theta_{t'} (\omega))\) and \eqref{eq: important fact} is proved.

Using \eqref{eq: important fact} with \(t' = 0\) yields the inclusion \(\supset\) from \eqref{eq: K shift identity control}. 
For the converse inclusion, fix \(P \in \cK (0, \theta_t (\omega))\), and notice that \(P = P \circ \bar{\theta}_{-t}^{-1} \circ \bar{\theta}_{t}^{-1}\). Using \eqref{eq: important fact} with \(t' = -t\) yields that \(P \circ \bar{\theta}_{-t}^{-1} \in \cK (0 - (-t), \theta_{-t} (\theta_t (\omega))) = \cK (t, \omega)\), which shows the inclusion \(\subset\) from \eqref{eq: K shift identity control}.
\end{proof}

\begin{remark}
		Condition \eqref{eq: homogeneous hypo} can be reformulated as \(\mu (t, \omega, \lambda) = \mu (0, \theta_t (\omega), \lambda)\) and \(\sigma (t, \omega, \lambda) = \sigma (0, \theta_t (\omega), \lambda)\), i.e., the coefficients depend on \((t, \omega)\) only through \(\theta_t (\omega)\). This mirrors the ``classical homogeneous Markovian'' setting, where the coefficients should only depend on \(\omega (t)\). It is clear that \eqref{eq: homogeneous hypo} is satisfied for coefficients of the form   
		\[
		g \Big(\sup_{s \in [t - h, t]} a (X_s),\, \inf_{s \in [t - h, t]} b (X_s),\, \int_{t - h}^t c (X_s)\, ds, \, X_t,\, \lambda\, \Big), \quad \text{ for some \(h \in [0, \infty]\)}.
		\]
\end{remark}

\begin{remark}
In his 2010 ICM talk, Peng \cite{peng_10_IMC} outlined that backward SDEs (BSDEs) have a one-to-one relation to so-called path-dependent PDEs (PPDEs), which are PDEs on the space of continuous functions. The notion of Dupire's functional derivatives \cite{dupire_22} then initiated the study of control problems in terms of PPDEs. A recent stream of literature (cf.~\cite{cosso_russo_22,zhou_23}) deals with Crandall--Lions viscosity solutions, which are defined in terms of classical tangency conditions and Dupire's derivatives.
In this context, \cite{B-N_16} studies the connection between certain convex expectations and semilinear PPDEs within a BSDE related framework. For an overview on PPDEs, we refer to the recent survey \cite{peng_et_all_PPDE_survey}; see also \cite{BLT2023, BT2023, EKTZ2014}.

Related to the above setting, under suitable assumptions on the coefficients, it was shown in~\cite{CN_23_EJP} that the value function associated to so-called nonlinear continuous semimartingales solves the path-dependent PDE 
\[
\begin{cases}
    \frac{d}{dt}\, v (t, \omega) + G (t, \omega, v) = 0, & (t, \omega) \in [0, T) \times C ([0, T]; \bR^d), \\ v (T, \omega) = \varphi (\omega), & \omega \in C ([0, T]; \bR^d), 
\end{cases}
\]
with 
\[
G (t, \omega, \psi) := \sup \Big\{ \langle \mu (s, \omega, \lambda), \nabla \psi (t, \omega) \rangle + \tfrac{1}{2} \on{tr} \big[ \sigma \sigma^* (s, \omega, \lambda) \, \nabla^2 \psi (t, \omega) \big] \colon \lambda \in \Lambda \Big\} 
\]
in a Crandall--Lions viscosity sense, where all derivatives are interpreted in Dupire's sense. As shown in \cite{CN_23_arxiv_jumps},  for \(h \equiv 0\), the value function \(v\) from \cite{CN_23_EJP} has a representation of the form~\eqref{eq: control rep}. Therefore, in this case the convex expectation from Theorem~\ref{theo: CE control setting} can be connected to a PPDE. For a state-dependent setting; see the recent paper \cite{C_24_arxiv_dirichlet}.
\end{remark}

\section{The sublinear case} \label{sec: sublinear} 
In this section, we specialize the results from the previous section to the case where the convex expectation or the convex semigroup is additionally positively homogeneous. We note that positive homogeneity together with convexity implies sublinearity. 
\begin{definition}
A convex expectation \(\cE\) is called a {\em sublinear expectation} if it satisfies the positive homogeneity property:
\begin{enumerate}[label=\textup{(Ep)},ref=(Ep)]
    \item \label{SE} 
    \(\cE_t(c \, \varphi)(\omega) = c \, \cE_t(\varphi)(\omega)\) for all \(t\in\bR\), \(c \in \mathbb{R}_+\) and \(\varphi \in \text{USA}_b(\Omega; \mathbb{R})\).
\end{enumerate}

Similarly, a convex semigroup \((S_t)_{t \geq 0}\) is called a {\em sublinear semigroup} if it satisfies the positive homogeneity property:
\begin{enumerate}[label=\textup{(Sp)},ref=(Sp)]
    \item \label{SL1} 
    \(S_t(c \, \varphi) = c \, S_t(\varphi)\) for all \(t, c \in \mathbb{R}_+\) and \(\varphi \in \text{USA}_b (\Omega, \cF_0; \mathbb{R})\).
\end{enumerate}
\end{definition}
In the sublinear case, the representation penalty function assumes values only in \(\{0, \infty\}\). Consequently, sublinear expectations can be represented using representation maps, which are set-valued functions, also called correspondences. This concept is defined next.
\begin{definition} \label{def: RS}
A {\em representation map} is a set-valued function \[\cU \colon \bR \times \Omega \twoheadrightarrow \mathfrak{P}(\Omega),\]
which satisfies the following properties:
	\begin{enumerate} [label=\textup{(R\arabic*)},ref=(R\arabic*)]
		\item \label{M1} \(\cU\) has nonempty, convex and compact values.
		\item \label{M2}
		\(\cU\) has an analytic graph, i.e., \(\on{gr} \, (\cU)\) is an analytic subset of \(\bR \times \Omega \times \mathfrak{P}(\Omega)\).
		\item \label{M3} For every \(t \in \bR\), the correspondence \(\omega \mapsto \cU (t, \omega)\) is weakly \(\cFt\)-measurable.\footnote{A correspondence \(y \mapsto \cV(y)\) between a measurable space \((Y, \mathcal{Y})\) and a topological space \(Z\) is called {\em weakly measurable} (resp. {\em measurable}) if \(\varphi^\ell (F) = \{ y \in Y \colon \cV (y) \cap F \not = \emptyset\}\in \mathcal{Y}\) for all open (resp. closed) \(F \subset Z\). For correspondences with nonempty compact values in a separable metrizable space (such as \(\cU\) under \ref{M1}), weak measurability is equivalent to measurability, see \cite[Theorem~18.10]{charalambos2013infinite}. For a thorough discussion of measurable correspondences, we refer to \cite[Chapter~17]{charalambos2013infinite}.}
		\item \label{M4} For every \((t, \omega) \in \bR \times \Omega\) and \(P \in \cU (t, \omega)\), 
		\(P ( [\omega]_t ) = 1\).
        \item \label{M5}
		\(\cU\) is stable under conditioning, i.e., 
		for every \(s, t \in \bR\), \(\omega \in \Omega\) with \(t < s\) and \(P \in \cU (t, \omega)\), the regular conditional probability $P (\, \cdot \mid \cFs)$ satisfies \[P\text{-a.s.} \ \ P (\, \cdot \mid \cFs) \in \cU (s, X).\]
		\item \label{M6}			
		\(\cU\) is stable under pasting, i.e., for every \(s, t \in \bR\) with \(t <  s\), \(\omega \in \Omega\),  \(P \in \cU (t, \omega)\), and any \(\cFs\)-measurable stochastic kernel \(\omega \mapsto Q_\omega\), the following implication holds:
		\[
		P\text{-a.s.} \ \ Q \in \cU (s, X) \quad \Longrightarrow \quad P \otimes_s Q \in \cU (t, \omega).
		\]
		\end{enumerate}
  We denote by \(\cR\) the set of all representation maps. 
	
	We say that \(\cU\) has {\em no fixed times of discontinuity} if for every \(t \in \bR\), \(\omega \in \Omega\), and \(P \in \cU (t, \omega)\),
	\[
	P (\Delta X_s = 0) = 1 \text{ for all } s > t.
	\]
	
	We call \(\cU \) {\em homogeneous} if
		\begin{align} \label{eq: cU homog}
		\cU (0, \theta_{t} (\omega)) = \big\{ P_t \colon P \in \cU (t, \omega) \big\}
		\end{align}
		for all \((t, \omega) \in \bR_+ \times \Omega\).
\end{definition}

\begin{discussion}
\begin{enumerate}
\item[(a)] The properties \ref{M2}, \ref{M5} and \ref{M6} are standard conditions that entail the dynamic programming principle on path spaces, see  \cite{nicole1987compactification,ElKa15,NVH}. Below, we will see that these conditions are, in a certain sense, not only sufficient but also necessary. 

\item[(b)] The property \ref{PF7} is closely related to the concept of `scalar measurability' that is very natural for correspondences with closed and convex values (cf.~\cite[Section~18.5]{charalambos2013infinite}).
 The question under which conditions scalar measurability is equivalent to weak measurability is classical and frequently investigated, see \cite{barbati_hess_98,castaing_valadier_77}. 
\end{enumerate}
\end{discussion}

Next, we connect representation maps to {\em sublinear} expectations and their homogeneity concepts.

\begin{theorem} \label{theo: represntation}
	The following are equivalent:
	\begin{enumerate}
		\item[\textup{(i)}] 
		\(\cE\) is a sublinear expectation without fixed times of discontinuity. 
		\item[\textup{(ii)}] 
		There exists a representation map \(\cU \in \cR\) without fixed times of discontinuity such that 
		\begin{align} \label{eq: worst case expectation}
		\cE_t (\varphi) (\omega) = \sup_{P\, \in\, \cU (t, \omega)} E^P \big[ \varphi \big]
		\end{align}
		for all \((t, \omega) \in \bR \times \Omega\) and \(\varphi \in \USA_b (\Omega; \bR)\).
	\end{enumerate}
	Moreover, \(\cE\) is homogeneous if and only if \(\cU\) is homogeneous. 
\end{theorem}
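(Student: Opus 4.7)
My plan is to reduce the claim to the convex analogue Theorem~\ref{theo: represntation convex} by showing that sublinearity forces the representation penalty function to be $\{0,\infty\}$-valued. For (i)$\Rightarrow$(ii), Theorem~\ref{theo: represntation convex} yields $\c \in \RPF$ together with the duality formula $\c(t,\omega,P) = \sup_{\varphi \in C_b(\Omega;\bR)}(E^P[\varphi] - \cE_t(\varphi)(\omega))$. Substituting $\varphi \mapsto c\varphi$ and using \ref{SE} give $\c = c\,\c$ for every $c > 0$, so $\c \in \{0,\infty\}$; the penalty is then entirely encoded by $\cU(t,\omega) := \{P : \c(t,\omega,P) = 0\}$, and~\eqref{eq: penalty expectation} collapses to~\eqref{eq: worst case expectation}. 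To verify $\cU \in \cR$, the properties \ref{M1}, \ref{M2}, and \ref{M4} read off from \ref{PF3}--\ref{PF5}, \ref{PF1}, and \ref{PF6} respectively; property \ref{M3} follows by combining \ref{PF2} with the characterization of weak measurability for compact-valued correspondences via product-measurable graphs \cite[Theorem~18.6]{charalambos2013infinite}. The structural properties \ref{M5} and \ref{M6} are extracted from the cocycle identity \ref{PF8}: plugging $Q = P$ into the inner infimum forces the sup-inf term to vanish whenever $P \in \cU(t,\omega)$, so the summand $E^P[\c(s,X,P(\,\cdot\mid\cFs))]$ must vanish too and yields \ref{M5}; stability under pasting then follows by applying \ref{PF8} to $R := P \otimes_s Q$ and using Lemma~\ref{lem: rem idenity} to rewrite the first summand as $\inf\{\c(t,\omega,Q') : Q' = P \text{ on } \cFs\} \le \c(t,\omega,P) = 0$.

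For the converse direction (ii)$\Rightarrow$(i), I define $\c := \infty \cdot \1_{\on{gr}(\cU)^c}$ and check each axiom of $\RPF$ separately. Properties \ref{PF1}--\ref{PF6} are straightforward translations of \ref{M1}--\ref{M4}, and \ref{PF7} follows from the measurable maximum theorem \cite[Theorem~18.19]{charalambos2013infinite}. The main obstacle is \ref{PF8}, which I expect to be the hardest step. I would distinguish cases according to whether $P \in \cU(t,\omega)$. If $P \in \cU(t,\omega)$, \ref{M5} and the choice $Q = P$ force both sides to vanish. If $P \notin \cU(t,\omega)$, I would show the right-hand side is $+\infty$ in three subcases: (a) if $P([\omega]_t) < 1$, testing \ref{PF8} with $\varphi = n\,\1_{[\omega]_t^c}$ and invoking \ref{PF6} makes the first summand blow up as $n \to \infty$; (b) if $P([\omega]_t) = 1$ but $P(\,\cdot\mid\cFs) \notin \cU(s,X)$ on a set of positive $P$-measure, the second summand is already $+\infty$ since $\c \in \{0,\infty\}$; (c) in the remaining subcase, Lemma~\ref{lem: rem idenity} identifies the first summand with $\inf\{\c(t,\omega,Q) : Q = P \text{ on } \cFs\}$, and a compactness argument via \ref{PF5} extracts a minimizer $Q^* \in \cU(t,\omega)$ satisfying $Q^* = P$ on $\cFs$; the pasting axiom \ref{M6} then yields $P = Q^* \otimes_s P(\,\cdot\mid\cFs) \in \cU(t,\omega)$, contradicting $P \notin \cU(t,\omega)$.

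The remaining equivalences fall out of the dictionary: sublinearity of $\cE$ under (ii) is immediate from~\eqref{eq: worst case expectation}, the absence of fixed times of discontinuity transfers because $\c(t,\omega,P) < \infty \Leftrightarrow P \in \cU(t,\omega)$, and the bijection $P \mapsto P_t$ on $\mathfrak{P}(\Omega)$ sends $\cU(t,\omega)$ onto $\cU(0,\theta_t\omega)$ precisely when $\c$ and $\cU$ satisfy their respective homogeneity identities~\eqref{eq: RPF homog} and~\eqref{eq: cU homog}.
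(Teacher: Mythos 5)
Your proposal follows the same route as the paper's proof: reduce to Theorem~\ref{theo: represntation convex}, observe that positive homogeneity forces \(\c \in \{0,\infty\}\), pass back and forth between \(\cU\) and the convex indicator \(\c\), and extract \ref{M5}/\ref{M6} from the cocycle identity \ref{PF8} (with the same contradiction argument via \ref{M6} in the converse direction). Almost every step matches the paper, including the homogeneity dictionary at the end.

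There is one step that does not work as you state it: the derivation of \ref{M3} in the direction (i)\(\Rightarrow\)(ii) from \ref{PF2} together with \cite[Theorem~18.6]{charalambos2013infinite}. That theorem gives the implication from weak measurability to a measurable graph; the converse implication (graph \(\Rightarrow\) weak measurability) is a projection-type result that requires a \emph{complete} (\(\sigma\)-finite) measure space, and moreover \ref{PF2} only gives you that \(\{(\omega,P)\colon \c(t,\omega,P)\le 0\}\) is \(\cFt\otimes\mathcal{B}(\mathfrak{P}(\Omega))\)-\emph{analytic}, not product-measurable. Since \ref{M3} demands weak measurability with respect to the raw, uncompleted \(\sigma\)-field \(\cFt\), this route is blocked. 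The paper instead obtains \ref{M3} from \ref{PF7} via the scalarization result Lemma~\ref{lem: scalar measurability}: for a correspondence with nonempty compact convex values in \(\mathfrak{P}(\Omega)\), weak \(\cFt\)-measurability is equivalent to \(\cFt\)-measurability of the scalar maps \(\omega\mapsto\sup_{P\in\cU(t,\omega)}E^P[\phi]\), \(\phi\in C_b(\Omega;\bR)\), and the latter is exactly \ref{PF7}. (Your use of the measurable maximum theorem for \ref{PF7} in the converse direction is fine; it is the same scalarization in the other direction.) Two smaller sketch-level omissions, not gaps: in \ref{M6} you do not address the second summand of \ref{PF8}, i.e., that \((P\otimes_s Q)(\,\cdot\mid\cFs)=Q\) a.s.\ so that \(E^{P\otimes_s Q}[\c(s,X,\cdot)]=0\); and the verification of \ref{PF2} for the indicator \(\c\) is not a pure translation of \ref{M2} but needs the Souslin-scheme argument with the pasting map \(\omega'\mapsto\omega\otimes_t\omega'\).
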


\begin{proof}

(i) \(\Longrightarrow\) (ii): By Theorem~\ref{theo: represntation convex}, there exists a representation penalty function \(\c \in \RPF\) without fixed times of discontinuity such that 
	\[
	\cE_t (\varphi) (\omega) = \sup_{P \in \mathfrak{P} (\Omega)} \Big( E^P \big[ \varphi \big] - \c (t, \omega, P) \Big) 
	\]
	for all \((t, \omega) \in \bR\times\Omega\) and \(\varphi \in \USA_b(\Omega; \bR)\). As observed in the proof of \cite[Theorem~1.1]{bartl_20}, the sublinearity property \ref{SE} implies that \(\c \in \{0, \infty\}\). Consequently, as \(\varphi \in \USA_b (\Omega; \bR)\) is bounded, we obtain that 
	\begin{align*}
	\cE_t (\varphi) (\omega) &= \sup_{P \in \{ \c (t, \omega, \, \cdot)\, =\, 0\}}  \Big( E^P \big[ \varphi \big] - \c (t, \omega, P) \Big) 
	=  \sup_{P \in \{ \c (t, \omega, \, \cdot)\, =\, 0\}}  E^P \big[ \varphi \big]
	\end{align*} 
for all \((t, \omega) \in \bR \times \Omega\). Thus, 
	\[
\cU (t, \omega) := \Big\{ P \colon \c(t, \omega, P) = 0 \Big\}, \quad (t, \omega) \in \bR \times \Omega, 
\] 
is a good candidate for a representation map. We already note that \(\cU\) has no fixed times of discontinuity, as \(\c\) has no fixed times of discontinuity.
It remains to show that \(\cU\) satisfies the properties \ref{M1}-\ref{M6}.

Fix \((t, \omega) \in \bR \times \Omega\). Note that \(\cU (t, \omega)\) is compact by \ref{PF5} and convex by \ref{PF3}. 
Furthermore, we have \( \cU (t, \omega) = \{ \alpha (t, \omega, \, \cdot \,) = 0 \} = \{ \alpha (t, \omega, \, \cdot \,) < \infty \} \not = \emptyset\), as otherwise we get a contradiction to \ref{PF4}.
Hence, \ref{M1} holds. 
Note that 
\(
\on{gr} \, (\cU) = \{ \c \leq 0\},
\)
which is analytic by the definition of lower semianalyticity and \ref{PF1}, i.e.,  \ref{M2} holds. 	
	By Lemma~\ref{lem: scalar measurability}, \ref{M3} follows from \ref{PF7}.
Moreover, \ref{M4} holds due to \ref{PF6}.
We now prove \ref{M5} and \ref{M6}. 
Fix \(s > t\) and \(P \in \cU (t, \omega)\). By \ref{PF8} and Lemma~\ref{lem: rem idenity}, 
\begin{align} \label{eq: zero}
\inf \big\{ \c (t, \omega, Q) \colon Q = P \text{ on } \cFs \big\} + E^P \big[ \c (s, X, P (\, \cdot \mid \cFs)) \big] = 0.
\end{align} 
Since \(\c \geq 0\), we obtain that \(P\)-a.s., 
\(
\c (s, X, P (\, \cdot \mid \cFs))  = 0, 
\)
so that \(P\)-a.s. \(P (\, \cdot \mid \cFs) \in \cU (s, X)\) and \ref{M5} holds. 
To show \ref{M6}, let \(\omega' \mapsto Q_{\omega'}\) be an \(\cFs\)-measurable stochastic kernel such that \(P\)-a.s. \(Q \in \cU (s, X)\). Set \(P^* := P \otimes_s Q\) and recall that \(P^* (d\omega) = E^P [ Q_\cdot ( d \omega) ]\). Clearly, we have \(P^* = P\) on \(\cFs\) by property \ref{M4}. 
Then, for \(A \in \cFs\) and \(G \in \cF\), it follows from \ref{M4} that 
\[
E^{P^*} \big[\1_A \1_G \big] = E^P \big[ \1_A Q (G) \big] = E^{P^*} \big[ \1_A Q (G) \big], 
\]
which shows that 
\begin{align} \label{eq: cond ex}
P^*\text{-a.s.} \quad P^* (\, \cdot \mid \cFs) = Q.
\end{align}
By \ref{PF2} and Lemma~\ref{lem: galmarino upper semianalytic}, it follows from \eqref{eq: cond ex} that
\[
E^{P^*} \big[ \c (s, X, P^* (\, \cdot \mid \cFs)) \big] = E^{P^*} \big[ \c (s, X, Q)\big] = E^{P} \big[ \c (s, X, Q)\big] = 0,
\]
where we used that \(P\)-a.s. \(Q \in \cU (s, X)\).
Using this identity, \ref{PF8} for \(P^*\), \(P = P^*\) on \(\cFs\) and \eqref{eq: zero}, we conclude that
\begin{align*}
\c (t, \omega, P^*) &= \inf \big\{ \c (t, \omega, Q) \colon Q = P^* \text{ on } \cFs \big\} + E^{P^*} \big[ \c (s, X, P^* (\, \cdot \mid \cFs)) \big]
\\&= \inf \big\{ \c (t, \omega, Q) \colon Q = P \text{ on } \cFs \big\} = 0.
\end{align*} 
This shows that \(P^* \in \cU (t, \omega)\), and therefore \ref{M6}. We conclude that~\(\cU \in \cR\). 

\smallskip
(ii) \(\Longrightarrow\) (i): 
Take a representation map \(\cU \in \cR\) without fixed times of discontinuity and define 
	\begin{align} \label{eq: convex indicator} 
	\c (t, \omega, P) := \begin{cases} 0, & P \in \cU (t, \omega), \\ + \infty, & P \not \in \cU (t, \omega), \end{cases}
	\end{align} 
	which is the indicator function (in the sense of convex analysis) of the correspondence~\(\cU\).
	In the following, we prove that \(\c \in \RPF\). Indeed, as \(\cU\) has no fixed times of discontinuity, it is evident that the same is true for \(\c\).
	
	For any \(c \in \bR\), we have \(\{\c \leq c\} = \on{gr}\, (\cU)\), which shows that \ref{M2} implies \ref{PF1}. 
	Take \(t \in \bR\) and \(\omega' \in \Omega\).
	By \ref{M2}, there is a Souslin scheme \(\{A_{n_1, \dots, n_k}\}\subset\mathcal{B}(\bR) \otimes \cF \otimes \mathcal{B} (\mathfrak{P}(\Omega))\) so that 
	\(
	\on{gr}\, (\cU) = \bigcup_{(n_i) \in \mathbb{N}^\infty} \bigcap_{k = 1}^\infty A_{n_1, \dots, n_k}.
	\)
	Define \(\phi \colon \Omega \times \mathfrak{P}(\Omega) \to \bR \times \Omega \times \mathfrak{P}(\Omega)\) by \(\phi (\omega, P) := (t, \omega \, \otimes_t \, \omega', P)\). Then, for every \(c \in \bR\), due to \ref{M3}, it holds 
	\begin{align*}
	\{ (\omega, P) \colon \c (t, \omega, P) \leq c \} &= \{ (\omega, P) \colon P \in \cU (t, \omega \, \otimes_t \, \omega')\} = \phi^{-1} (\on{gr}\, (\cU)) 
	\\&= \bigcup_{(n_i) \in \mathbb{N}^\infty} \bigcap_{k = 1}^\infty \phi^{-1} (A_{n_1, \dots, n_k}).
	\end{align*}
	Since \(\phi\) is \((\cFt \otimes \mathcal{B}(\mathfrak{P}(\Omega)) / \mathcal{B}(\bR) \otimes \cF \otimes \mathcal{B} (\mathfrak{P}(\Omega)))\)-measurable, we have \( \phi^{-1} (A_{n_1, \dots, n_k}) \in \cFt \otimes \mathcal{B}(\mathfrak{P}(\Omega))\), and consequently, \ref{PF2} holds.
	Next, \ref{PF3}-\ref{PF5} follow from \ref{M1}, and \ref{PF6} is due to \ref{M4}. 
	By Lemma~\ref{lem: scalar measurability}, the property \ref{PF7} follows from \ref{M3}.
		Finally, it remains to show \ref{PF8}. Fix \(s, t \in \bR\) with \(s > t\), \(\omega \in \Omega\) and \(P \in \mathfrak{P}(\Omega)\). First, assume that \(\c (t,\omega, P) < \infty\), i.e., \(P \in \cU (t, \omega)\). Then, by Lemma~\ref{lem: rem idenity}, we obtain that
	\[
		\sup_{ \varphi \in \USA_b (\Omega, \cF_s; \bR)} \Big( E^P \big[ \varphi \big] - \cE_t (\varphi) (\omega) \Big) = \inf \big\{ \c (t, \omega, Q) \colon Q = P \text{ on } \cFs \big\} = 0.
	\]
	Moreover, \ref{M5} implies that \(P\)-a.s. \(P (\, \cdot \mid \cFs) \in \cU (s, X)\) and hence, 
	\[
	E^P \big[ \c (s, X, P (\, \cdot \mid \cFs))\big] = 0.
	\]
	Consequently, \eqref{eq: P8 equation} holds. 
	Conversely, suppose that \(P \not \in \cU (t, \omega)\) and, by contradiction, that \eqref{eq: zero} holds (cf.~ Lemma~\ref{lem: rem idenity}). 
	Then, there exists \(Q \in \cU (t, \omega)\) such that \(Q = P\) on \(\cFs\) and \(P\)-a.s. \(P (\, \cdot \mid \cFs) \in \cU(s, X)\). But \ref{M6} yields that
	\[
	P = P\, \otimes_s \, P (\, \cdot \mid \cFs) = Q\, \otimes_s \, P (\, \cdot \mid \cFs) \in \cU (t, \omega), 
	\]
	which is a contradiction. Consequently, \eqref{eq: zero} is violated, and hence, as \(\c\) takes only values in \(\{0, \infty\}\), \eqref{eq: P8 equation} holds. 

 Finally, we show the equivalence of the statements about homogeneity. First, we show that the correspondence \(\cU\) is homogeneous in case \(\cE\) has this property. To that end, fix \((t, \omega) \in \bR_+ \times \Omega\) and let \(\c\) be as in \eqref{eq: convex indicator}.
	By Theorem~\ref{theo: represntation convex}, the representation penalty function \(\c\) is homogeneous and consequently, we obtain that 
	\begin{align*}
		\cU (0, \theta_{t} (\omega)) &= \Big\{ P \colon \c (0, \theta_{t} (\omega), P) = 0 \Big\} 
		= \Big\{ P \colon \c (0, \theta_{t} (\omega), (P_{-t})_{t}) = 0 \Big\} 
		\\&= \Big\{ P \colon \c (t, \omega, P_{-t}) = 0 \Big\} 
		= \Big\{ P_t \colon \c (t, \omega, P) = 0 \Big\} 
		= \Big\{ P_t \colon P \in \cU (t, \omega) \Big\}, 
	\end{align*}
	which shows that \(\cU\) is homogeneous. Conversely, suppose that \(\cU\) is homogeneous. 
	Then, for all \((t, \omega) \in \bR_+ \times \Omega\), using \eqref{eq: cU homog}, we obtain that
	\begin{align*}
		\cE_t (\varphi \circ \theta_t ) (\omega) &= \sup_{P \in \cU (t, \omega)} E^{P_t} \big[ \varphi \big]  
		= \sup_{P \in \cU (0, \theta_{t} (\omega))} E^P \big[ \varphi \big] 
		= \cE_0 (\varphi) (\theta_{t} (\omega)), \phantom \int
	\end{align*}
	which shows that \(\cE\) is homogeneous. 
\end{proof}

\subsection{Nonlinear L\'evy processes} \label{sec: ex Levy}
As an illustration of the previous results, we relate the concept of nonlinear L\'evy processes, as introduced in \cite{neufeld2017nonlinear}, to our framework.
Extensions to more general nonlinear semimartingales (as considered in \cite{CN_23_EJP, CN_23_arxiv_jumps, CN_24_SPA}) are possible, but they are more technical. Here, we aim for an illustration of our framework in a setting with jumps and set \(\Omega := D (\bR; \bR^d)\) for a fixed dimension \(d \in \mathbb{N}\). 

Let \(\bS^d_+\) be the space of all symmetric positive semidefinite \(d \times d\) matrices with real entries. Denote by \(\mathcal{L}\) the space of all L\'evy measures, i.e., all measures \(\nu\) on \((\bR^d, \mathcal{B}(\bR^d))\) such that \(\nu (\{0\}) = 0\) and \(\int_{\bR^d} (1 \wedge \|x\|^2) \, \nu (dx) < \infty\). We endow \(\mathcal{L}\) with the topology induced by the test functions \(x \mapsto (1 \wedge \|x\|^2) f (x)\), \(f \in C_b (\bR^d; \bR)\). It is known from \cite[Lemma~A.1]{C_24_JMAA} that \(\mathcal{L}\) is Polish with this topology. 

Let \(\fPas\) be the set of all Borel probability measures \(P\) on \(D (\bR_+; \bR^d)\) such that the one-sided coordinate process \((X_t)_{t \geq 0}\) is a \(P\)-semimartingale for its natural filtration\footnote{It is well-known (cf.~\cite[Proposition~2.2]{neufeld2014measurability}) that it does not matter whether this filtration is taken raw or right-continuous.} whose semimartingale characteristics \((B^P, C^P, \nu^P)\) are absolutely continuous w.r.t. the Lebesgue measure \(\llambda\). For notational convenience, we denote the densities by \((b^P, c^P, K^P)\) in the product space \(\bR^d \times \bS^d_+ \times \mathcal{L}\). 

We introduce uncertainty to the semimartingale characteristics, by considering a Borel subset \(\Theta \subset \bR^d \times \bS^d_+ \times \mathcal{L}\). 
We use the notation
\[
\mathfrak{P} (\Omega) \ni P \mapsto \bar{P}_t := P \circ (X_{s + t})_{s \geq 0}^{-1} \in \mathfrak{P} (D (\bR_+; \bR)), \quad t \in \bR.
\]
For \((t, \omega) \in \bR \times \Omega\), we define the uncertainty set 
\begin{equation} \label{eq: C levy}
\begin{split}
 \cU (t, \omega) := \Big\{ P \in \mathfrak{P} (\Omega) \colon &P ( [\omega]_t) = 1, \ \bar{P}_t \in \fPas \\&\text{with  \((\bar{P}_t \otimes \llambda)\)-a.e. \((b^{\bar{P}_t}, c^{\bar{P}_t}, K^{\bar{P}_t}) \in \Theta\)} \, \Big\}.
\end{split}
\end{equation} 
The following theorem provides 
conditions on the domain of uncertainty \(\Theta\) such that \(\cU\) is a homogeneous representation map without fixed times of discontinuity. 

\begin{theorem} \label{theo: main levy}
Suppose that \(\Theta\) is nonempty, convex and compact. Then, 
    \(\cU\) is a homogeneous representation map without fixed times of discontinuity. 
\end{theorem}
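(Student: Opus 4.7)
The plan is to verify, in order, the absence of fixed times of discontinuity, homogeneity, and the six axioms \ref{M1}--\ref{M6} of a representation map. The absence of fixed times of discontinuity is immediate: for $P \in \cU(t, \omega)$ the compensator of the jump measure of $(X_{s+t})_{s \geq 0}$ under $\bar P_t$ is $K^{\bar P_t}_s(dx) \, ds$, i.e., absolutely continuous in time, so $P(\Delta X_s = 0) = 1$ for every $s > t$. Homogeneity \eqref{eq: cU homog} is also immediate: since $\bar P_t$ depends on $P$ only through $(X_{s+t})_{s \geq 0}$ and the condition on the characteristics is time-stationary, the map $P \mapsto P_t$ sends $\cU(t, \omega)$ bijectively onto $\cU(0, \theta_t(\omega))$. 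Nonemptiness \ref{M1} is settled by picking any $(b, c, K) \in \Theta$ and letting $\bar P_t$ be the law of the L\'evy process with characteristics $(b, c, K)$ started at $\omega(t)$, pasted with $\omega$ on $(-\infty, t]$.

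For convexity and compactness in \ref{M1}, the cleanest route is to recast $\cU(t, \omega)$ as a set of relaxed martingale problem solutions in the spirit of Subsection~\ref{sec: example relaxed control}: parametrize $\Theta$ by a control variable taking values in the compact Polish space $\Theta$ itself, so that $\cU(t, \omega)$ coincides with the projection to $\Omega$ of the set of relaxed control rules whose generator at each time is an average of L\'evy generators indexed by $\Theta$. Since $\Theta$ is convex, such averages remain in $\Theta$ by a Jensen argument applied $(\bar P_t\otimes \llambda)$-a.e., so the projection equals $\cU(t, \omega)$; convexity of $\cU(t, \omega)$ then follows from convexity of the relaxed control set (which in turn comes from linearity of the martingale problem in the control measure), and compactness follows from the uniform bounds on $|b|$, $\|c\|$ and $\int (1\wedge |x|^2)\, K(dx)$ inherited from compactness of $\Theta$ via standard tightness arguments for semimartingales, combined with closedness of the martingale problem under weak convergence; this parallels the argument in Lemma~\ref{lem: properties relaxed controls} and is carried out in detail for the jump case in \cite{CN_23_arxiv_jumps}.

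The analytic graph \ref{M2} and weak $\cF_t$-measurability \ref{M3} reduce to Borel measurability of the map $P \mapsto (b^P, c^P, K^P)(P \otimes \llambda)$-a.e., together with Borel measurability of $\Theta$; these are provided by \cite[Section~2]{neufeld2014measurability}, from which one deduces that $\{(t, \omega, P) : \bar P_t \in \fPas,\, (b^{\bar P_t}, c^{\bar P_t}, K^{\bar P_t}) \in \Theta\ (\bar P_t \otimes \llambda)\text{-a.e.}\}$ is Borel, and then intersection with the Borel set $\{P([\omega]_t)=1\}$ and projection yield the required (weak) measurability, noting that upper hemicontinuity follows from the compactness proved above so that weak measurability coincides with measurability. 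Property \ref{M4} is built into the definition of $\cU$.

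The remaining stability properties \ref{M5} and \ref{M6} reduce to the corresponding statements for the one-sided semimartingale uncertainty set $\fPas$ restricted to characteristics in $\Theta$: regular conditional probabilities of semimartingale laws are again semimartingale laws with the same characteristic densities (on the appropriate time interval), while pasting preserves the martingale problem thanks to the absolute continuity of the compensators and the tower property for stochastic integrals; both are established in the present generality in \cite[Sections~2--3]{neufeld2014measurability} and \cite[Theorem~2.1]{neufeld2017nonlinear}. The main obstacle is the convexity-plus-compactness step: without convexity of $\Theta$, mixtures of laws in $\cU(t,\omega)$ need not have characteristics pointwise in $\Theta$, so the relaxed-control detour (rather than a direct verification on $(b, c, K)$) is what makes the argument go through.
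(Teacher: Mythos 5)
Your proposal is correct in substance and covers every required property, but it organizes the argument differently from the paper. The paper's central device is the identity
\[
\cU(0,\omega)=\big\{P\circ(\omega\otimes_0 X)^{-1}\colon P\in\mathcal{P}(\Theta)\big\},
\]
with $\mathcal{P}(\Theta)$ as in \eqref{eq: P theta}: all of \ref{M1}--\ref{M3} are then transferred from the single \emph{fixed} set $\mathcal{P}(\Theta)$ through the Carath\'eodory map $(\omega,P)\mapsto P\circ(\omega\otimes_0 X)^{-1}$ (Lemma~\ref{lem: main levy}, using \cite[Theorem~8.2.8]{aubin}), with convexity of $\mathcal{P}(\Theta)$ coming directly from \cite[Theorem~III.3.40]{JS} and compactness from \cite{CN_23_arxiv_jumps}. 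You instead verify the properties one by one and obtain convexity and compactness through a relaxed-control reformulation with control space $\Theta$ plus a barycenter/Jensen argument; this is legitimate (and is in fact close to how the compactness results cited by the paper are proved), but it re-derives what the paper simply imports, and it does not isolate the clean reduction to $\mathcal{P}(\Theta)$ that makes the measurability step in \ref{M2}--\ref{M3} almost automatic. For \ref{M5}--\ref{M6} and the absence of fixed times of discontinuity you and the paper rely on the same sources (\cite{neufeld2014measurability,neufeld2017nonlinear}, quasi-left continuity via \cite[Corollary~II.1.19]{JS}). One inaccuracy to fix: you assert that upper hemicontinuity of the correspondence ``follows from the compactness proved above.'' Compactness of the \emph{values} does not imply upper hemicontinuity of the \emph{correspondence}; what you actually need (and what holds) is that for nonempty compact-valued correspondences into a separable metrizable space, weak measurability and measurability coincide (\cite[Theorem~18.10]{charalambos2013infinite}), so upper hemicontinuity should not be invoked at all. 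This is a slip in justification rather than a gap in the argument.
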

Similar statements within the one-sided time framework with path-dependent domains of uncertainty were proved in \cite{CN_23_EJP,CN_23_arxiv_jumps}. Topological properties of the ambiguity set associated to nonlinear L\'evy process were investigated in \cite{LN_19_transactions}, although with a different topology for the space \(\mathcal{L}\) of L\'evy measures.
It is worth commenting more precisely on the relation between Theorem~\ref{theo: main levy} and
\cite[Theorem~2.5]{LN_19_transactions}. The latter, in particular, highlights a tightness condition that is
necessary for the closedness of a set of L\'evy processes. At first glance, this tightness condition appears to
be missing in Theorem~\ref{theo: main levy}. However, as we explain now, it is implicitly enforced by our choice
of topology on \(\mathcal{L}\). The paper \cite{LN_19_transactions} endows the space \(\mathcal{L}\) with the weakest topology under which all the mappings \(\nu \mapsto \int f (x) \, \nu (dx)\)
are continuous for every bounded continuous function \(f \colon \bR^d \to \bR\) vanishing around the origin. As outlined in \cite[Remark~5.3]{CN_23_arxiv_jumps}, a set \(K \subset \mathcal{L}\) is compact in the topology used in the present paper if and only if it is compact in the topology used in \cite{LN_19_transactions} while satisfying the tightness condition \(\lim_{\delta \to 0} \sup_{\nu \in K} \int_{\|x\| \leq \delta} \|x\|^2 \, \nu (dx) = 0\). This observation shows that our framework incorporates condition (J) from \cite{LN_19_transactions}, thereby clarifying the connection between Theorem~\ref{theo: main levy} and \cite[Theorem~2.5]{LN_19_transactions}.
The proof of Theorem~\ref{theo: main levy} can be reduced to properties of the correspondence \(\omega \mapsto
\cU (0, \omega)\) that we establish in the following lemma.

\begin{lemma} \label{lem: main levy}
Suppose that \(\Theta\) is nonempty, convex and compact.
    Then, the correspondence \(\omega \mapsto \mathcal{U} (0,\omega)\) is weakly measurable with nonempty convex compact values. 
\end{lemma}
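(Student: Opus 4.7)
\smallskip

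\emph{Plan.} The plan is to decouple the dependence of $\cU(0,\omega)$ on $\omega$ into a ``past'' component, contributing only via the constraint $P([\omega]_0)=1$, and a ``forward'' component governed by the semimartingale characteristics on $[0,\infty)$. Concretely, for $x \in \bR^d$ define
\[
\mathcal{V}(x) := \Big\{ Q \in \mathfrak{P}(D(\bR_+;\bR^d)) \colon Q \in \fPas,\ Q(X_0 = x) = 1,\ (Q\otimes\llambda)\text{-a.e.\ }(b^Q,c^Q,K^Q) \in \Theta\Big\}.
\]
Then $\cU(0,\omega) = \{\omega \otimes_0 Q : Q \in \mathcal{V}(\omega(0))\}$, and $Q \mapsto \omega \otimes_0 Q$ is an affine homeomorphism onto its image. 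Hence the properties of $\cU(0,\omega)$ follow once $\mathcal{V}(x)$ has the same properties, together with a mild measurability transfer in $x$ (via the continuous map $\omega \mapsto \omega(0)$) and in the past $(\omega(s))_{s\leq 0}$.

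\smallskip

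\emph{Nonemptiness and convexity of $\mathcal{V}(x)$.} Nonemptiness follows by choosing any $(b_0,c_0,K_0)\in\Theta$ and taking the law of the L\'evy process with these characteristics started from $x$. For convexity, given $Q_1,Q_2\in\mathcal{V}(x)$ and $\lambda\in(0,1)$, the mixture $Q := \lambda Q_1 + (1-\lambda)Q_2$ is again a semimartingale law whose random characteristics, computed via the Jacod martingale problem, are a $(Q\otimes\llambda)$-a.e.\ density-weighted convex combination of those of $Q_1$ and $Q_2$, hence remain in the convex set $\Theta$.

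\smallskip

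\emph{Compactness of $\mathcal{V}(x)$.} Compactness of $\Theta$ yields uniform bounds on $\|b\|$ and $\|c\|$, and, crucially, a uniform control of the L\'evy measures that incorporates the small-jump tightness built into the chosen topology on $\mathcal{L}$ (see also the discussion following the lemma). By the standard semimartingale tightness criterion (e.g.\ Jacod--Shiryaev VI.4.18), this implies tightness of $\mathcal{V}(x)$. Closedness is the delicate step: if $Q_n \to Q$ weakly with $Q_n \in \mathcal{V}(x)$, stability of the Jacod martingale problem together with closedness and convexity of $\Theta$ passes the characteristics constraint to the limit, along the lines of the arguments in \cite{CN_23_EJP,CN_23_arxiv_jumps}. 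This shows $Q \in \mathcal{V}(x)$.

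\smallskip

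\emph{Measurability and conclusion.} The properties of $\cU(0,\omega)$ transfer from those of $\mathcal{V}(\omega(0))$ via the pasting homeomorphism. For weak measurability, note that the graph
\[
\on{gr}(\cU(0,\cdot)) = \big\{(\omega,P)\in\Omega\times\mathfrak{P}(\Omega) \colon P([\omega]_0)=1,\ \bar{P}_0\in\fPas,\ (\bar{P}_0\otimes\llambda)\text{-a.e.\ }(b^{\bar P_0},c^{\bar P_0},K^{\bar P_0})\in\Theta\big\}
\]
is Borel: the condition $P([\omega]_0)=1$ is Borel jointly in $(\omega,P)$ since $[\omega]_0$ depends nicely on $\omega|_{(-\infty,0]}$, while the semimartingale constraint on $\bar{P}_0$ is a Borel condition in $P$ alone by standard results on martingale problems. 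Since the values are compact in the separable metrizable space $\mathfrak{P}(\Omega)$, a Borel graph implies weak measurability by \cite[Theorem~18.10]{charalambos2013infinite}.

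\smallskip

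\emph{Main obstacle.} The principal technical difficulty is the closedness of $\mathcal{V}(x)$ under weak convergence, specifically the passage to the limit for the jump characteristic in the martingale problem. The topology on $\mathcal{L}$ used here—containing the weighting $1\wedge\|x\|^2$—is what makes this passage work with $\Theta$ compact, and in fact encodes precisely the small-jump tightness condition referred to in the remark following the lemma.
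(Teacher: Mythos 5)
Your decomposition into a ``past'' part and a ``forward'' part with characteristics in $\Theta$, and your treatment of nonemptiness, convexity and compactness of the forward set, match the paper's strategy in substance (the paper works with the single set $\mathcal{P}(\Theta)$ of laws started at $0$ and recenters via the pasting, rather than with the family $\mathcal{V}(x)$, but this is cosmetic). The problem is your measurability step. You assert that because the graph of $\cU(0,\cdot)$ is Borel and the values are compact, weak measurability follows from \cite[Theorem~18.10]{charalambos2013infinite}. That theorem only states that, for a correspondence with nonempty compact values into a separable metrizable space, \emph{weak measurability is equivalent to measurability} (open versus closed test sets); it says nothing about passing from a measurable graph to (weak) measurability. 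That converse direction is precisely the delicate implication: in general a Borel graph only gives that the lower inverses $\{\omega\colon \cU(0,\omega)\cap F\neq\emptyset\}$ are \emph{analytic} (projections of Borel sets), hence measurable only for the universal completion, not for $\cF$ itself. Since the lemma is later used to obtain \ref{M3}, i.e.\ weak measurability with respect to the Borel-type $\sigma$-field $\cF_t$, this gap is not harmless.

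The paper closes exactly this gap differently: it proves the identity $\cU(0,\omega)=\varkappa(\omega,\mathcal{P}(\Theta))$ where $\varkappa(\omega,P)=P\circ(\omega\otimes_0 X)^{-1}$ is a Carath\'eodory function (measurable in $\omega$, continuous in $P$) and $\mathcal{P}(\Theta)$ is one fixed nonempty convex compact set, and then invokes \cite[Theorem~8.2.8]{aubin}, which states that the image of a fixed compact set under a Carath\'eodory map is a weakly measurable compact-valued correspondence. The point of putting \emph{all} of the $\omega$-dependence into the map $\varkappa$ (none into the set) is exactly to avoid the graph-to-measurability implication you are implicitly using. To repair your argument you should either adopt this route, or supply a genuine measurable-selection/Castaing-representation argument yielding Borel (not merely universally) measurable selectors — which is what the graph alone does not provide.
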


\begin{proof} 
We define
\begin{align} \label{eq: P theta}
    \mathcal{P} (\Theta) :=  \Big\{ P \in \fPas \colon P (X_0 = 0) = 1, \text{ \((P \otimes \llambda)\)-a.e. \((b^{P}, c^{P}, K^{P}) \in \Theta\)} \, \Big\}
\end{align}
Clearly, as \(\Theta \not = \emptyset\), also \(\mathcal{P} (\Theta) \not = \emptyset\). Since \(\Theta\) is convex, \cite[Theorem~III.3.40]{JS} yields that \(\mathcal{P}(\Theta)\) is convex. Furthermore, it follows from \cite[Theorem~5.1, Propositions~6.1,~6.3]{CN_23_arxiv_jumps} that \(\mathcal{P} (\Theta)\) is compact. 

Next, we transfer these observations to the correspondence \(\omega \mapsto \mathcal{U} (0, \omega)\). 
First, for every \(\omega \in \Omega\), the map 
\(\omega' \mapsto \omega \otimes_0 \omega'\) is continuous from \(D (\bR_+; \bR^d)\) to \(\Omega\). This follows in a straightforward manner from the characterization of Skorokhod \(J_1\) convergence as given by \cite[Theorem~15.10]{HWY}.

As a consequence, by \cite[Theorem~8.10.61]{bogachev}, the map 
\begin{align} \label{eq: continuity measure map Levy}
\Omega \times \mathfrak{P} (D (\bR_+; \bR^d))\to \mathfrak{P} (\Omega),\quad  (\omega, P) \mapsto \varkappa (\omega, P) := P \circ (\omega \, \otimes_0 \, X)^{-1}  
\end{align} 
is measurable in the first and continuous in the second argument, i.e., it is a Carath{\'e}odory function. Next, we claim that  
    \begin{align} \label{eq: main Levy}
    \mathcal{U} (0,\omega) = \Big\{ P \circ (\omega \, \otimes_0 \, X)^{-1} \colon P \in \mathcal{P}(\Theta) \Big\} \, \big( = \varkappa (\omega, \mathcal{P} (\Theta)) \big).
    \end{align}
    To prove this identity, first let \(P \in \mathcal{U} (0,\omega)\) be a measure, define 
    \[
    Q := P \circ (X_s - \omega (0))_{s \geq 0}^{-1} \in \mathfrak{P} (D (\bR_+; \bR^d)),
    \] 
    and notice that \(P = Q \circ (\omega \, \otimes_0 \, X)^{-1}\). For \(\bar{P} := \bar{P}_0 = P \circ (X_s)_{s \geq 0}^{-1}\), let \((b^{\bar{P}}, c^{\bar{P}}, K^{\bar{P}})\) be the Lebesgue densities of the \(\bar{P}\)-characteristics of \((X_s)_{s \geq 0}\). 
    Then, \cite[Lemma~2.9]{jacod80} yields \(Q \in \fPas\) with~\((Q \otimes \llambda)\)-a.e. 
    \[(b^Q, c^Q, K^Q) = (b^{\bar{P}}, c^{\bar{P}}, K^{\bar{P}}) \circ (X_s + \omega (0))_{s \geq 0}.\]
    Now, we get that 
    \begin{align*}
        (Q \, \otimes \, \llambda) ( (b^Q, c^Q, K^Q) \not \in \Theta ) = (\bar{P} \, \otimes \, \llambda) ( (b^{\bar{P}}, c^{\bar{P}}, K^{\bar{P}}) \not \in \Theta) = 0. 
    \end{align*}
    In summary, \(Q \in \mathcal{P} (\Theta)\), which shows the inclusion \(\subset\) in \eqref{eq: main Levy}. For the converse inclusion, let \(P \in \mathcal{P} (\Theta)\). Then, \cite[Lemma~2.9]{jacod80} yields \(Q := P \circ (X_s + \omega (0))^{-1}_{s \geq 0} \in \fPas\) with \((Q \otimes \llambda)\)-a.e. \((b^{Q}, c^{Q}, K^{Q}) \in \Theta\). Hence, \(Q \circ (\omega \, \otimes_0 \, X)^{-1} \in \mathcal{U} (0,\omega)\), which finishes the proof of \eqref{eq: main Levy}.

 The identity \eqref{eq: main Levy} shows that \(\mathcal{U} (0,\omega)\) is nonempty and convex, because \(\mathcal{P} (\Theta)\) has these properties. Furthermore, by the continuity of \eqref{eq: continuity measure map Levy} in the second variable, also compactness of \(\mathcal{P}(\Theta)\) transfers to \(\mathcal{U} (0,\omega)\). 
    Finally, the weak measurability of \(\omega \mapsto \mathcal{U} (0, \omega)\) follows from \cite[Theorem~8.2.8]{aubin}, because \(\varkappa\) is a Carath{\'e}odory function.
\end{proof} 

\begin{proof}[Proof of Theorem~\ref{theo: main levy}]
Notice that 
\begin{align*}
    \cU (t, \omega) = \Big\{ P_t \colon P \in \mathcal{U} (0,\theta_{-t} (\omega)) \Big\}.
\end{align*}
This shows that \(\cU\) is homogeneous. 
Furthermore, nonemptiness and convexity are entailed by Lemma~\ref{lem: main levy}. 
By \cite[Proposition~B.5]{DKK24}, the map \((t, \omega) \mapsto \theta_t (\omega)\) is continuous from \(\bR \times \Omega\) into \(\Omega\). Thus, the map \((t, P) \mapsto P_t\) is continuous from \(\bR \times \mathfrak{P}(\Omega)\) into \(\mathfrak{P}(\Omega)\) by \cite[Theorem~8.10.61]{bogachev}. This observation shows that \(\cU (t, \omega)\) is compact by the compactness of \(\mathcal{U} (0,\theta_{-t} (\omega))\) which follows from Lemma~\ref{lem: main levy}. We conclude that \ref{M1} holds. 
Since \(\cU (t, \omega) = \tau (t, \mathcal{U} (0,\theta_{-t} (\omega)))\) with \(\tau (t, P) := P_t\), \cite[Theorem~8.2.8]{aubin} yields that weak measurablilty transfers from \(\mathcal{U}(0, \,\cdot \, )\) to \(\cU\). In particular, the correspondence \(\cU\) has a measurable graph by \cite[Theorem~18.6]{charalambos2013infinite}, which implies \ref{M2}. Moreover, as \(\cU (t, \omega)\) depends on \(\omega\) only through \((\omega (s))_{s \leq t}\), Galmarino's test (cf.~\cite[Theorem~IV.96]{dellacheriemeyer}) upgrades the weak measurability of \(\cU\) to \ref{M3}.
Property \ref{M4} follows directly from the definition. 
Finally, \ref{M5} and \ref{M6} can be proved similar to \cite[Theorem~2.1 (i)]{neufeld2017nonlinear}. As this proof is rather lengthy, we skip the details for brevity. 
It remains to note that \(\cU\) has no fixed times of discontinuity, because semimartingales with absolutely continuous characteristics are quasi-left continuous, see \cite[Corollary~II.1.19]{JS}. The proof is complete.
\end{proof}

In the remainder of this subsection, we relate the sublinear expectation associated with the representation map from Theorem~\ref{theo: main levy} to the concept of nonlinear L\'evy processes. The following definitions are adapted from \cite{denk2020semigroup, K21}.

\begin{definition} 
We say that \((\hat{\Omega}, \hat{\cF}, \hat{\cE})\) is a \emph{sublinear expectation space} if \((\hat{\Omega}, \hat{\cF})\) is a measurable space and \(\hat{\cE} \colon \mathcal{L}^\infty  (\hat{\Omega}, \hat{\cF}) \to \cF\) satisfies the following properties:\footnote{As usual, \(\mathcal{L}^\infty (\hat{\Omega}, \hat{\cF})\) denotes the space of all bounded \(\hat{\cF}\)-measurable functions \(Y \colon \Omega \to \bR\).}
\begin{enumerate}
    \item[\textup{(a)}] \(\hat{\cE} (Y) \leq \hat{\cE} (Z)\) for all \(Y, Z \in \mathcal{L}^\infty  (\hat{\Omega}, \hat{\cF})\) with \(Y \leq Z\).
    \item[\textup{(b)}] \(\hat{\cE}(c) = c\) for all \(c \in \bR\).
    \item[\textup{(c)}] \(\hat{\cE} (Y + Z ) \leq \hat{\cE} (Y) + \hat{\cE} (Z)\) for all \(Y, Z \in \mathcal{L}^\infty  (\hat{\Omega}, \hat{\cF})\).
    \item[\textup{(d)}] \(\hat{\cE} (\lambda\, Y) = \lambda \, \hat{\cE} (Y)\) for all \(Y \in \mathcal{L}^\infty  (\hat{\Omega}, \hat{\cF})\) and \(\lambda \in \bR_+\).
\end{enumerate}
\end{definition} 

\begin{definition} 
Let \((\hat{\Omega}, \hat{\cF}, \hat{\cE})\) be a sublinear expectation space and let \((Y_t)_{t \geq 0}\) be a \cadlag \(\bR^d\)-valued stochastic process on the measurable space \((\hat{\Omega}, \hat{\cF})\). Then,  \((Y_t)_{t \geq 0}\) is called \emph{ \(\hat{\cE}\)-L\'evy process} if 
\begin{enumerate}
    \item[\textup{(i)}] \(Y_t\) is \(\hat{\cF}\)-measurable for every \(t \in \bR_+\).
    \item[\textup{(ii)}] \(\hat{\cE} ( f (Y_0) ) = f (0)\) for all \(f \in C_b (\bR^d; \bR)\).
    \item[\textup{(iii)}] \(\hat{\cE} ( f (Y_{t} - Y_s)) = \hat{\cE} (f (Y_{t - s}))\) for all \(t \geq s\) and \(f \in C_b (\bR^d; \bR)\).
    \item[\textup{(iv)}] For all \(n \in \mathbb{N}\), \(t_1 < t_2 < \dots < t_{n + 1}\) and \(f \in C_b (\bR^{d \, (n + 1)}; \bR)\), 
    \begin{align*}
    \hat{\cE} \Big( \hat{\cE} \big(f (& Y_{t_1}, \dots, Y_{t_n}, Y_{t_{n + 1}} - Y_{t_n}) \big) \Big) 
    \\&= \hat{\cE} \Big( \hat{\cE} \big(f ( y_1, \dots, y_n, Y_{t_{n + 1}} - Y_{t_n}) \big) \big|_{ (y_1, \dots, y_n)\,=\,(Y_{t_1}, \dots, Y_{t_n})} \Big).
    \end{align*}
    \item[\textup{(v)}] \(\hat{\cE} ( f (Y_t)) \to f (0)\) as \(t \searrow 0\), for all \(f \in C_b (\bR^d; \bR)\).
\end{enumerate}
\end{definition} 

The properties (iii) and (iv) correspond to the stationary independent increment property of a classical linear L\'evy process. 

\smallskip
It was shown in \cite{neufeld2017nonlinear} that the coordinate process on \(D (\bR_+; \bR^d)\) is an \(\hat{\cE}\)-L\'evy process under
\[
\hat{\cE} (Z) = \sup_{P \in \mathcal{P} (\Theta)} E^P \big[ Z \big], \quad Z \in \mathcal{L}^\infty (D (\bR_+; \bR^d), \mathcal{B} (D (\bR_+; \bR^d))), 
\]
where \(\mathcal{P} (\Theta)\) is defined in \eqref{eq: P theta}.
In the articles \cite{denk2020semigroup,K21}, nonlinear L\'evy processes are related to nonlinear convolution semigroups.  
Further, \cite{K21} provides a one-to-one correspondence between the finite-dimensional marginal distributions of a nonlinear L\'evy process and an associated family of L\'evy--Khinchine triplets. 
A fundamental difference between the modeling approaches from \cite{neufeld2017nonlinear} and \cite{denk2020semigroup,K21} is that the framework from \cite{neufeld2017nonlinear} is built on the path space \(D (\bR_+; \bR^d)\), while \cite{denk2020semigroup,K21} work with semigroups on the state space and finite dimensional distributions. A nonlinear L\'evy process as constructed in \cite{neufeld2017nonlinear} always leads to a nonlinear L\'evy process in the sense of \cite{denk2020semigroup,K21}. In the following, we investigate the converse direction, i.e., we ask whether the finite dimensional objects considered in \cite{denk2020semigroup,K21} extend uniquely to operators on the path space as studied in~\cite{neufeld2017nonlinear}.

\begin{definition}
\label{def: LK triplet}
Let \((Y_t)_{t \geq 0}\) be an \(\hat{\cE}\)-L\'evy process. We call \(\hat{\Theta} \subset \bR^d \times \mathbb{S}^d_+ \times \mathcal{L}\) the \emph{nonlinear L\'evy--Khinchine tiplet} of \((Y_t)_{t \geq 0}\) if, for every \(f \in C^\infty_c (\bR^d; \bR)\) and \(x \in \bR^d\), 
\[
\lim_{t \searrow 0} \, \frac{\hat{\cE} (f (x + Y_t) ) - f (x)}{t} = H (f, x), 
\]
where
\begin{align*} 
H (f, x) := \sup_{(b, c, K) \, \in \, \hat{\Theta}} \Big( \langle b, \nabla & f (x) \rangle + \tfrac{1}{2} \on{tr} \big[ c \, \nabla^2 f (x) \big] 
\\&+ \int_{\bR^d} \big( f (x + y) - f (x) - \langle y, \nabla f (x) \rangle \1_{\{\|y\| \leq 1\}} \big) K (dy) \Big).
\end{align*} 
\end{definition} 

As we will see in the next section, the nonlinear L\'evy--Khinchine triplet is closely related to the pointwise generator of a Markovian semigroup on the state space.

\smallskip
For an \(\hat{\cE}\)-L\'evy process \((Y_t)_{t \geq 0}\), its finite-dimensional marginal distribution \(\hat{\cE} \circ Y^{-1}\) is defined by 
\[
\hat{\cE} \circ Y^{-1} ( f (X_{t_1}, \dots, X_{t_n}) ) = \hat{\cE} ( f (Y_{t_1}, \dots, Y_{t_n}))
\]
on 
\begin{align*}
\mathsf{FD} := \Big\{ f (X_{t_1}, \dots, X_{t_n}) \colon n \in \mathbb{N}, \, \ &t_1 < \dots < t_n, \, f \in C_b (\bR^{d \, n}, \bR), \\ & X \equiv \text{coordinate process on \(D (\bR_+; \bR^d)\)} \Big\}. 
\end{align*} 
The following theorem shows that the finite-dimensional marginal distribution of a nonlinear L\'evy process with suitable L\'evy--Khinchine triplet extends uniquely to the Neufeld--Nutz representation from \cite{neufeld2017nonlinear}. We recall from the Discussion~\ref{diss: NVH example} that the convexity and compactness assumptions on \(\Theta\) are necessary for the representation \eqref{eq: Phi Levy} to hold.

\begin{theorem} \label{theo: extension Levy}
Let \((Y_t)_{t \geq 0}\) be an \(\hat{\cE}\)-Levy process with nonlinear L\'evy--Khinchine triplet \(\Theta\), which is assumed to be nonempty, convex and compact. Then, \(\hat{\cE} \circ Y^{-1}\) extends uniquely to a map \(\Phi \colon \USA_b (\Omega; \bR) \to \USA_b (\Omega, \cF_0; \bR)\) as in Remark~\ref{rem:uniqueness} that is given by 
\begin{align} \label{eq: Phi Levy}
 \Phi (\varphi) (\omega) = \sup_{P \in \mathcal{P} (\Theta)} E^P \big[ \varphi (\omega \, \otimes_0 \, X) \big], \quad \varphi \in \USA_b (\Omega; \bR),
\end{align} 
where \(\mathcal{P}(\Theta)\) is defined in \eqref{eq: P theta}.
\end{theorem}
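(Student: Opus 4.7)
The plan is to define $\Phi$ by the right-hand side of \eqref{eq: Phi Levy}, verify it satisfies conditions (i)--(v) of Remark~\ref{rem:uniqueness} via the sublinear expectation machinery already established, show it agrees with $\hat{\cE} \circ Y^{-1}$ on the class $D_{0, \infty}$ of finite-dimensional test functions, and finally invoke Remark~\ref{rem:uniqueness} for uniqueness. For the first part, observe that by the identity \eqref{eq: main Levy} in Lemma~\ref{lem: main levy}, we may rewrite
\[
\Phi(\varphi)(\omega) = \sup_{Q \in \cU(0, \omega)} E^Q\big[ \varphi \big], \quad \varphi \in \USA_b(\Omega; \bR),
\]
where $\cU$ is the homogeneous representation map from Theorem~\ref{theo: main levy}. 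By Theorem~\ref{theo: represntation}, the family $\cE_t(\varphi)(\omega) := \sup_{Q \in \cU(t, \omega)} E^Q[\varphi]$ is a homogeneous sublinear expectation without fixed times of discontinuity, so $\Phi = \cE_0$. Properties (i)--(v) of Remark~\ref{rem:uniqueness} are then immediate from Definition~\ref{def: sublinear expectation}: (i)--(iv) come from \ref{NE2}--\ref{NE3} and \ref{NE5}, while (v) follows from the absence of fixed times of discontinuity asserted in Theorem~\ref{theo: main levy}.

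Next, I would show that $\Phi = \hat{\cE} \circ Y^{-1}$ on the set $\mathsf{FD} = D_{0, \infty}$. The key is to exploit the Markov-type structure available on both sides. On the $\Phi$ side, the tower property \ref{NE6} combined with homogeneity, together with the fact that elements of $\cU(0, \omega)$ only depend on $\omega$ through $\omega(0)$ (which in turn follows from the structure of $\mathcal{P}(\Theta)$ and Lemma~\ref{lem: main levy}), yields, for $0 \leq t_1 < \dots < t_n$ and $f \in C_b(\bR^{d n}; \bR)$, a recursive formula expressing $\Phi(f(X_{t_1}, \dots, X_{t_n}))(0)$ in terms of iterated one-dimensional suprema of the form $\hat{T}_t g(x) := \sup_{P \in \mathcal{P}(\Theta)} E^P[g(x + X_t)]$. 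On the $\hat{\cE}$ side, property (iv) combined with the stationary increment property (iii) produces exactly the same iterative structure, with one-dimensional building blocks $T_t g(x) := \hat{\cE}(g(x + Y_t))$. Hence, it suffices to prove $T_t = \hat{T}_t$ on $C_b(\bR^d; \bR)$ for every $t \geq 0$.

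The main obstacle is precisely this last identification of the two one-dimensional sublinear semigroups. Both $(T_t)$ and $(\hat{T}_t)$ are sublinear semigroups on $C_b(\bR^d; \bR)$: for $(T_t)$, the semigroup property follows from (iii) and (iv), while for $(\hat{T}_t)$ it follows from the pasting stability and convexity of $\mathcal{P}(\Theta)$ (cf.~\cite{neufeld2017nonlinear}). Both are strongly continuous at zero (for $(T_t)$ by (v), for $(\hat{T}_t)$ by standard tightness and the quasi-left-continuity of semimartingales). By hypothesis, $(T_t)$ has pointwise generator $H$ on $C^\infty_c(\bR^d; \bR)$, and a direct computation using Itô's formula together with the characterization of $\mathcal{P}(\Theta)$ via absolutely continuous characteristics in $\Theta$ shows that $(\hat{T}_t)$ has the same pointwise generator on $C^\infty_c(\bR^d; \bR)$. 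The identity $T_t = \hat{T}_t$ then follows from a standard uniqueness result for sublinear convolution semigroups with prescribed generator; this is precisely the type of result established via Chernoff-style approximations in \cite{denk2020semigroup, K21, BDKN2025} and can be invoked here. The non-emptiness, convexity and compactness of $\Theta$ play an essential role in ensuring that both semigroups are well-defined on $C_b$ and share the same generator.

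Finally, once $\Phi$ and $\hat{\cE} \circ Y^{-1}$ agree on $D_{0, \infty}$, the uniqueness of an extension satisfying (i)--(v) of Remark~\ref{rem:uniqueness} is guaranteed by that remark, and the proof is complete. I expect the semigroup identification step above to be the most delicate: while both semigroups are unambiguously characterized by $H$ on smooth compactly supported test functions, promoting this equality from generators to semigroups requires either a well-posedness result for the associated nonlinear Kolmogorov equation or an explicit approximation scheme, and the exact choice will depend on the regularity afforded by the assumptions on $\Theta$.
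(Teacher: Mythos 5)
Your proposal is correct and follows essentially the same route as the paper: rewrite \(\Phi\) via \eqref{eq: main Levy} as a supremum over \(\cU(0,\omega)\), deduce the properties of Remark~\ref{rem:uniqueness} from Theorems~\ref{theo: represntation} and~\ref{theo: main levy}, match finite-dimensional distributions with \(\hat{\cE}\circ Y^{-1}\), and conclude uniqueness of the extension from Remark~\ref{rem:uniqueness}. The only difference is that for the finite-dimensional agreement the paper simply cites \cite[Theorem~2.1]{neufeld2017nonlinear} and \cite[Theorem~6.4]{K21}, whereas you sketch the content of those citations (reduction to the one-dimensional semigroups \(T_t\) and \(\hat{T}_t\) and their identification through the common pointwise generator \(H\)); your sketch is consistent with what those references establish, so nothing essential is missing.
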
 

\begin{proof}
Due to \eqref{eq: main Levy}, the function \(\Phi\) defined in \eqref{eq: Phi Levy} has the representation
\[
\Phi (\varphi) = \sup_{P \in \cU (0, \omega)} E^P \big[ \varphi \big], 
\]
where \(\cU\) is defined in \eqref{eq: C levy}.
Hence, Theorem~\ref{theo: represntation} and Theorem~\ref{theo: main levy} imply that \(\Phi\) has the properties in Remark~\ref{rem:uniqueness}. Further, by \cite[Theorem~2.1]{neufeld2017nonlinear} and \cite[Theorem~6.4]{K21}, \(\Phi\) is an extension of \(\hat{\cE} \circ Y^{-1}\). Here, we mention that the prerequisites from \cite[Theorem~6.4]{K21} are implied by our assumptions on \(\Theta\), see the proof of Lemma~\ref{lem: main levy} for details. Finally, the uniqueness of the extension follows from Remark~\ref{rem:uniqueness}. 
\end{proof}

\section{A comparison result for convex expectations with Markovian structure} \label{sec: 1D extensions}

The purpose of this section is to show that convex expectations with an additional Markovian structure are determined by their one-dimensional distributions. 
Furthermore, we use this result to relate such convex expectations to their infinitessimal description that is given through pointwise generators. Finally, we put our results in the context of optimal control. 

\begin{definition}
    Let \(\cE\) be a homogeneous convex expectation. We call \(\cE\) Markovian if, for all \(g \in C_b (E; \bR)\) and \(t > 0\), 
    \(
    \cE_0 ( g (X_t) ) (\omega)
    \)
    depends on \(\omega\) only through \(\omega (0)\).\footnote{I.e., there exists a function $h\colon E\to\bR$ such that \(\cE_0(g(X_t))(\omega)=h(\omega(0))\)
for all $\omega\in\Omega$.} In that case, we call 
    \[
    T_t (g) (x) := \cE_0 ( g (X_t) ) (\omega) \, \Big|_{\omega (0) = x,} \quad t \in \bR_+, \, g \in C_b (E; \bR), \, x \in E, 
    \]
    the 
    \emph{Markovian semigroup} on \(C_b (E; \bR)\) associated to \(\cE\). Furthermore, such a Markovian \(\cE\) is called \emph{continuity preserving} if the function
    \(
    (x_1, x_2, \dots, x_n) \mapsto T_t (g (x_1, \dots, x_{n}, \, \cdot \,)) (x_n)
    \)
    is an element of \(C_b (E^n; \bR)\) for all \(g \in C_b (E^{n + 1}; \bR)\), \(t > 0\) and \(n \in \mathbb{N}\).
\end{definition}

\begin{lemma} \label{lem: initial point SGP}
Let \(\cE\) be a Markovian homogeneous convex expectation with associated Markovian semigroup \((T_t)_{t \geq 0}\). Then, \(T_t T_s = T_{t + s}\) for all \(s, t \in \bR_+\).
\end{lemma}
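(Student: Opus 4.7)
The plan is to apply the tower property \ref{NE6} at the intermediate time $t$ and then combine the homogeneity identity \eqref{eq: cE homog} with the Markovian hypothesis to identify $\cE_t(g(X_{t+s}))$ with $T_s(g) \circ X_t$. I would start from
\[
T_{t+s}(g)(x) = \cE_0(g(X_{t+s}))(\omega)\Big|_{\omega(0)=x}
\]
and first apply \ref{NE6} to rewrite the right-hand side as $\cE_0(\cE_t(g(X_{t+s})))(\omega)\big|_{\omega(0)=x}$. Homogeneity then gives
\[
\cE_t(g(X_{t+s}))(\omega) = \cE_0(g(X_{t+s})\circ\theta_{-t})(\theta_t\omega) = \cE_0(g(X_s))(\theta_t\omega),
\]
using $X_{t+s}\circ\theta_{-t} = X_s$. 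By the Markovian hypothesis, applied at time $s$ to the bounded continuous function $g$, the value $\cE_0(g(X_s))(\theta_t\omega)$ depends on $\theta_t\omega$ only through $(\theta_t\omega)(0) = \omega(t) = X_t(\omega)$ and equals $T_s(g)(X_t(\omega))$. Combining these steps yields
\[
\cE_0(g(X_{t+s}))(\omega) = \cE_0\bigl(T_s(g)\circ X_t\bigr)(\omega),
\]
and evaluation at $\omega(0) = x$ gives the desired semigroup identity, provided $T_t(T_s(g))(x)$ is read via the same defining formula.

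The one subtlety, which I expect to be the main obstacle, is precisely this interpretation of $T_t(T_s(g))$: a priori $T_s(g)$ need not lie in $C_b(E;\bR)$, so it is formally outside the declared domain of $T_t$. However, $T_s(g)$ is always bounded and upper semianalytic on $E$; this is inherited from $\cE_0(g(X_s)) \in \USA_b(\Omega;\bR)$ by precomposing with the continuous embedding $E \to \Omega$ that sends a point to its associated constant path, the Markovian hypothesis ensuring that the value read off is independent of the chosen representative. Consequently $T_s(g)\circ X_t$ lies in $\USA_b(\Omega;\bR)$ and $\cE_0$ can be applied to it. Moreover, the identity derived above forces $\cE_0(T_s(g)(X_t))(\omega)$ to depend on $\omega$ only through $\omega(0)$, since the left-hand side $\cE_0(g(X_{t+s}))(\omega)$ has that property by the Markovian hypothesis applied to $g \in C_b(E;\bR)$. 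Hence the natural extension of $T_t$ to bounded upper semianalytic functions is well-defined on $T_s(g)$ and delivers $T_t(T_s(g))(x) = T_{t+s}(g)(x)$. Under the continuity-preserving hypothesis one would have $T_s(g) \in C_b(E;\bR)$ directly, obviating any extension; without it, the above domain-matching argument is what makes the composition meaningful.
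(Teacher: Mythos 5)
Your proof is correct and follows essentially the same route as the paper: the identical chain of identities (tower property \ref{NE6}, homogeneity \eqref{eq: cE homog}, and the Markovian hypothesis), merely traversed starting from $T_{t+s}(g)$ rather than from $T_t(T_s(g))$. Your additional discussion of how to interpret $T_t(T_s(g))$ when $T_s(g)$ need not be continuous is a legitimate refinement that the paper's terser proof leaves implicit, but it does not change the underlying argument.
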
 
\begin{proof}
    Let g \(\in C_b (E; \bR)\), \(s, t \in \bR_+\), \(x \in E\) and \(\omega \in \Omega\) with \(\omega (0) = x\). 
    Then, by homogeneity and \ref{NE6}, we obtain that
    \begin{align*} 
    T_t (T_s (g)) (x) &= \cE_0 (\cE_0 (g (X_s)) (\theta_t (X))) (\omega) 
    = \cE_0 (\cE_t (g (X_{t + s}))) (\omega) 
    \\&= \cE_t (g (X_{t + s})) (\omega) = T_{t + s} (g) (x). \qedhere
    \end{align*} 
\end{proof}

\begin{example} \label{ex: nonlinear Levy Markovian}
The sublinear expectation associated to the class of nonlinear L\'evy processes in Section~\ref{sec: ex Levy} is Markovian and continuity preserving. Indeed, 
let \(\mathcal{U}\) be as in \eqref{eq: C levy} and set
\[
\cE_t (\varphi) (\omega) := \sup_{P \in \mathcal{U} (t, \omega)} E^P \big[ \varphi \big].
\]
With \(\mathcal{P}(\Theta)\) as in \eqref{eq: P theta}, we obtain that 
\begin{align*}
\cE_0 (g (x_1, \dots, x_n, X_t)) (\omega) &= \sup_{P \in \mathcal{U} (0, \omega)} E^P \big[ g (x_1, \dots, x_n, X_t) \big] 
\\&= \sup_{P \in \mathcal{P} (\Theta)} E^P \big[ g (x_1, \dots, x_n, X_t + \omega (0)) \big].
\end{align*} 
Hence, \(\cE\) is Markovian and, since \(\mathcal{P} (\Theta)\) is compact as shown in the proof of Lemma~\ref{lem: main levy}, Berge's maximum theorem (cf.~\cite[Theorem~17.31]{charalambos2013infinite}) implies that \(\cE\) is continuity preserving.
\end{example}

The following theorem explains that Markovian homogeneous convex expectations are determined by 
their associated Markovian semigroups.
 This observation enables us to translate uniqueness questions for convex expectations on the path space~\(\Omega\) to uniqueness questions for convex semigroups on the state space \(E\), allowing us to use powerful methods from viscosity and semigroup theory. We illustrate such an application below.

\begin{theorem} \label{theo: 1DD}
    Let \(\cE\) and \(\hcE\) be two Markovian homogeneous convex expectations without fixed times of discontinuity and denote by \((T_t)_{t \geq 0}\) and \((\hT_t)_{t \geq 0}\) the associated Markovian semigroups. Furthermore, assume that \(\hcE\) is continuity preserving. 
    Then, the following are equivalent:
    \begin{enumerate}
        \item[\textup{(i)}] 
        \(T_t \leq \widehat{T}_t\) on \(C_b (E; \bR)\) for all \(t > 0\). 
        \item[\textup{(ii)}] 
    \(\cE \leq \hcE\) on \(\USA_b (\Omega; \bR)\).
    \end{enumerate}
\end{theorem}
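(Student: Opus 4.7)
The implication (ii) $\Rightarrow$ (i) is immediate by applying (ii) to $\varphi = g(X_t) \in C_b(\Omega; \bR)$ and evaluating at any $\omega$ with $\omega(0) = x$, so the real work lies in proving (i) $\Rightarrow$ (ii). My plan is to reduce the comparison to finite-dimensional test functions via Theorem~\ref{theo: FDD}, and then establish the reduced comparison by induction on the number of coordinates, using the Markov structure to collapse the conditional expectation at the latest time into the one-dimensional semigroup. More precisely, by homogeneity \eqref{eq: cE homog} of $\cE$ and $\hcE$ it suffices to show $\cE_0 \le \hcE_0$ on $\USA_b(\Omega; \bR)$; by Theorem~\ref{theo: FDD} applied at $t = 0$ (using the absence of fixed times of discontinuity), this in turn reduces to
\begin{align*}
\cE_0(g(X_{t_1}, \ldots, X_{t_n}))(\omega) \le \hcE_0(g(X_{t_1}, \ldots, X_{t_n}))(\omega)
\end{align*}
for every $n \in \mathbb{N}$, $0 < t_1 < \ldots < t_n$, $g \in C_b(E^n; \bR)$ and $\omega \in \Omega$.

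The base case $n = 1$ follows directly from (i) together with the definition of the Markovian semigroups. For the induction step from $n$ to $n+1$, I will combine the tower property \ref{NE6}, homogeneity \eqref{eq: cE homog}, the Markov property, and the freezing identity of Remark~\ref{rem: regular cond property convex} (to pull out the coordinates at times $\le t_n$ from the inner expectation) to obtain
\begin{align*}
\cE_{t_n}(g(X_{t_1}, \ldots, X_{t_{n+1}})) = h(X_{t_1}, \ldots, X_{t_n}),
\end{align*}
where $h(x_1, \ldots, x_n) := T_{t_{n+1} - t_n}(g(x_1, \ldots, x_n, \cdot))(x_n)$, and analogously for $\hcE$ with $\hat h$ defined via $\hat T$ in place of $T$. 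Hypothesis~(i) then yields $h \le \hat h$ pointwise, while property \ref{NE0} ensures that $h$ is bounded upper semianalytic.

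The main obstacle is that the induction hypothesis supplies a comparison only for bounded \emph{continuous} test functions, whereas $h$ need not be continuous, since $\cE$ is not assumed to be continuity preserving. This is precisely where the continuity preservation assumption on $\hcE$ enters: it guarantees $\hat h \in C_b(E^n; \bR)$, allowing the induction to close via the sandwich
\begin{align*}
\cE_0(g(X_{t_1}, \ldots, X_{t_{n+1}}))(\omega) &= \cE_0(h(X_{t_1}, \ldots, X_{t_n}))(\omega) \le \cE_0(\hat h(X_{t_1}, \ldots, X_{t_n}))(\omega) \\
&\le \hcE_0(\hat h(X_{t_1}, \ldots, X_{t_n}))(\omega) = \hcE_0(g(X_{t_1}, \ldots, X_{t_{n+1}}))(\omega),
\end{align*}
where the outer equalities are the tower property \ref{NE6} applied to $\cE$ and $\hcE$ respectively, the first inequality is monotonicity \ref{NE1} together with $h \le \hat h$, and the second inequality is the induction hypothesis applied to the bounded continuous function $\hat h$. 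Invoking Theorem~\ref{theo: FDD} then delivers (ii).
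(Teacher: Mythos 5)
Your proposal is correct and follows essentially the same route as the paper: reduction to finite-dimensional test functions via Theorem~\ref{theo: FDD} (and homogeneity to fix $t=0$), followed by induction on the number of time points, using the tower property, the freezing identity of Remark~\ref{rem: regular cond property convex}, homogeneity and the Markov structure to collapse the innermost conditional expectation into the one-dimensional semigroup, and the continuity preservation of $\hcE$ to ensure that $\hat h$ is a legitimate continuous test function for the induction hypothesis. The sandwich chain you write is exactly the paper's argument.
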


\begin{proof}
The implication (ii) \(\Longrightarrow\) (i) is trivial. We now prove the converse implication. By Theorem~\ref{theo: FDD}, it is enough to prove that
\[
\cE_0 \big( g (X_{t_1}, \dots, X_{t_n}) \big) (\omega) \leq \hcE_0 \big( g (X_{t_1}, \dots, X_{t_n}) \big) (\omega)
\]
for all \(g \in C_b (E^n; \bR)\), \(0 \leq t_1 < t_2 < \dots < t_{n - 1} < t_n\) and \(n \in \mathbb{N}\). 
We argue by induction. 
The induction base follows from~(i). Now, assume that the claim holds for \(n - 1\). In the following, for transparency, we suppress the \(\omega\) in our notation. Moreover, in case we consider the concatenation of two convex expectations, \(\omega'\) denotes the variable corresponding to the outer expectation, i.e., \(\cE (\cE (\, \cdot \,) (\omega'))\). 
Using Remark~\ref{rem: regular cond property convex}, the tower property \ref{NE6} and homogeneity of \(\cE\) and \(\hcE\), hypothesis~(i), the assumption that \(\hcE\) is continuity preserving and finally, the induction hypothesis, we obtain that 
\begin{align*}
    \cE_0 \big( \, g (X_{t_1}, \dots, X_{t_n}) \big) &= \cE_0 \big( \, \cE_{t_{n - 1}} \big( g (\omega' (t_1), \dots, \omega' (t_{n - 1}), X_{t_n}) \big) (\omega') \big) 
    \\&= \cE_0 \big( \, \cE_{0} \big( g (\omega' (t_1), \dots, \omega' (t_{n - 1}), X_{t_n - t_{n - 1}}) \big) (\theta_{t_{n - 1}} (\omega')) \big) 
    \\&= \cE_0 \big( \, T_{t_{n} - t_{n - 1}} \big( g (\omega' (t_1), \dots, \omega' (t_{n - 1}), \, \cdot \,) \big) (\omega' (t_{n - 1}))\big) 
    \\&\leq \cE_0 \big( \, \widehat{T}_{t_{n} - t_{n - 1}} \big( g (\omega' (t_1), \dots, \omega' (t_{n - 1}), \, \cdot \,) \big) (\omega' (t_{n - 1}))\big) 
    \\&\leq \hcE_0 \big( \, \widehat{T}_{t_{n} - t_{n - 1}} \big( g (\omega' (t_1), \dots, \omega' (t_{n - 1}), \, \cdot \,) \big) (\omega' (t_{n - 1}))\big) 
    \\&= \hcE_0 \big( \, \hcE_{t_{n - 1}} \big( g (\omega' (t_1), \dots, \omega' (t_{n - 1}), X_{t_n}) \big) (\omega') \big) 
    \\&= \hcE_0 \big( \, \hcE_{t_{n - 1}} \big( g (X_{t_1}, \dots, X_{t_n}) \big) \big)
    \\&= \hcE_0 \big( \, g (X_{t_1}, \dots, X_{t_{n}}) \big). 
\end{align*}
This completes the induction step. The proof is complete. 
\end{proof}

In the remainder of this section, our program is to illustrate an application of Theorem~\ref{theo: 1DD} in the context of stochastic optimal control. 
To pitch the idea, Theorem~\ref{theo: 1DD} explains that the considered class of convex expectations are characterized by semigroups on the state space. Under suitable structural assumptions such semigroups can be uniquely characterized by their \emph{pointwise generators}. On \(\bR^d\) such a unique characterization can be deduced from viscosity theory. 

From now on, let \(E = \bR^d\) and \(\Omega = C (\bR; \bR^d)\) for a fixed dimension \(d \in \mathbb{N}\).

\begin{definition}
For a convex semigroup \((T_t)_{t \geq 0}\) on \(C_b (\bR^d; \bR)\), we call \((A, D(A))\), with
\[
A (f) (x) := \lim_{t \searrow 0} \, \frac{T_t (f) (x) - T_0 (f) (x)}{t}
\]
and
\[
D (A) := \Big\{ f \in C_b (\bR^d; \bR) \colon A (f) (x) \text{ exists for all } x \in \bR^d \Big\},
\]
the {\em pointwise generator of \((T_t)_{t \geq 0}\)}.
\end{definition}

At this point, it is worth mentioning that the Levy--Khinchine triplet from Definition~\ref{def: LK triplet} has a one-to-one relation to the Markovian semigroup of the sublinear expectation associated to a nonlinear L\'evy process, see also Example~\ref{ex: nonlinear Levy Markovian} above.

Let \(\Lambda\) be a compact metrizable space and take two Borel functions
\begin{align*}
    &\mu \colon \bR^d \times \Lambda \to \bR^d,
    \\
    &\sigma \colon \bR^d \times \Lambda \to \bR^{d \times r},
\end{align*}
where \(r \in \mathbb{N}\) denotes the dimension of the noise.

\begin{condition} \label{cond: lipschitz}
    The functions \(\mu\) and \(\sigma\) are bounded and continuous.
    Furthermore, there exists a Lipschitz constant \(L > 0\) such that 
    \[
    \| \mu (x, \lambda) - \mu (y, \lambda)\| + \|\sigma (x, \lambda) - \sigma (y, \lambda)\| \leq L \, \|x - y\|
    \]
    for all \(x, y \in \bR^d\) and \(\lambda \in \Lambda\). Finally, the set
    \[
   \Theta (x) := \Big\{ (\mu (x, \lambda), \sigma \sigma^* (x, \lambda)) \colon \lambda \in \Lambda \Big\}
    \]
    is convex for all \(x \in \bR^d\).
\end{condition}

For \(f \in C^\infty_b (\bR^d; \bR)\) and \(x \in \bR^d\), we define the Hamiltonian 
\[
H (f) (x) := \sup \Big\{ \langle \mu (x, \lambda), \nabla f (x) \rangle + \tfrac{1}{2} \on{tr} \big[ \sigma (x, \lambda) \sigma^* (x, \lambda)\, \nabla^2 f (x) \big] \colon \lambda \in \Lambda \Big\}.
\]
Associated with the Hamiltonian, we introduce a sublinear semigroup as follows.
Similar to Section~\ref{sec: ex Levy}, let \(\fPas\) be the set of all probability measures on \(C (\bR_+; \bR^d)\) such that the one-sided coordinate process \((X_t)_{z \geq 0}\) is a continuous \(P\)-semimartingales with absolutely continuous characteristics whose Lebesgue densities are denoted by \((b^P, c^P)\).
Furthermore, as in Section~\ref{sec: ex Levy}, we set
\[
\mathfrak{P} (\Omega) \ni P \mapsto \bar{P}_t := P \circ (X_{s + t})_{s \geq 0}^{-1} \in \mathfrak{P} (C (\bR_+; \bR)), \quad t \in \bR.
\]
For \((t, \omega) \in \bR \times \Omega\), we define 
\[
\cU (t, \omega) := \Big\{ P \in \mathfrak{P} (\Omega) \colon P ( [\omega]_t) = 1, \ \bar{P}_t \in \fPas \text{ with  \((\bar{P}_t \otimes \llambda)\)-a.e. \((b^{\bar{P}_t}, c^{\bar{P}_t}) \in \Theta (X_\cdot)\)} \, \Big\}.
\]
This uncertainty set is related to the so-called random \(G\)-expectation, see Discussion~\ref{diss: NVH example} and \cite[Section~4]{NVH}. Under the convexity assumption in Condition~\ref{cond: lipschitz}, it was proved in \cite{CN_23_arxiv_jumps} that \(\cU\) has a control representation in terms of the relaxed control framework from Section~\ref{sec: example relaxed control}. We formally state this connection in the next lemma. It is worth mentioning that it relates our observations from Section~\ref{sec: example relaxed control} to the language of random \(G\)-expectations. We believe it is interesting to illustrate both perspectives. In part (v) of Discussion~\ref{diss: G vs relax} below, we also comment on this connection in more detail.

\begin{lemma} \label{lem: control representation}
    Assume that Condition~\ref{cond: lipschitz} holds and let \(\cK\) be defined as in \eqref{eq: KR} with \(\mu (t, \omega, \lambda) \equiv \mu (\omega (t), \lambda)\) and \(\sigma (t, \omega, \lambda) \equiv \sigma (\omega (t), \lambda)\). Then, 
    \begin{align*}
    \cU (t, \omega) = \Big\{ P \circ X^{-1} \colon P \in \cK (t, \omega) \Big\}. 
\end{align*}
\end{lemma}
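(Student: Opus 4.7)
The plan is to establish the two inclusions separately and to leverage the result from \cite{CN_23_arxiv_jumps} that is alluded to in the paragraph preceding the lemma. The key device for both directions is the disintegration of a relaxed control as $M(ds,d\lambda) = m_s(d\lambda)\,ds$ (cf.\ \cite[Lemma~3.2]{LakSPA15}), which translates the martingale problem on $\Omega\times\m$ into a classical martingale problem for $X$ with \emph{averaged} coefficients. This link between $\cU$ and $\cK$ is precisely where the convexity of $\Theta(x)$ enters.

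For the inclusion $\supset$, I would take $P \in \cK(t,\omega)$ and write $M(ds,d\lambda) = m_s(d\lambda)\,ds$ under $P$. The martingale property of $(N_{t,s}(\varphi))_{s \geq t}$ for all $\varphi \in C^2_b(\bR^d;\bR)$, combined with the definition of $L$ in \eqref{eq: L}, implies by standard martingale problem arguments (cf.\ \cite[Chapter~III]{JS}) that under $P$, the coordinate process $X$ is a continuous semimartingale with absolutely continuous characteristics whose densities are
\[
b_s = \int_\Lambda \mu(X_s,\lambda)\,m_s(d\lambda), \qquad c_s = \int_\Lambda \sigma\sigma^*(X_s,\lambda)\,m_s(d\lambda).
\]
Since $m_s$ is a probability measure on $\Lambda$ and $\Theta(X_s)$ is convex by Condition~\ref{cond: lipschitz}, the pair $(b_s,c_s)$ lies in $\Theta(X_s)$ for $(P\otimes \llambda)$-a.e.\ $(s,\omega)$. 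As $P([\omega]_t) = 1$ by definition of $\cK(t,\omega)$, this yields $P \circ X^{-1} \in \cU(t,\omega)$.

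For the converse inclusion $\subset$, I would take $Q \in \cU(t,\omega)$ with characteristic densities $(b^Q, c^Q)$ satisfying $(b^Q_s, c^Q_s) \in \Theta(X_s)$ for $(Q \otimes \llambda)$-a.e.\ $(s,\omega)$. The task is to construct a relaxed control kernel $m_s(d\lambda)$, predictable in $(s,\omega)$, such that
\[
b^Q_s = \int_\Lambda \mu(X_s,\lambda)\,m_s(d\lambda), \qquad c^Q_s = \int_\Lambda \sigma\sigma^*(X_s,\lambda)\,m_s(d\lambda),
\]
after which one defines $P := Q \otimes (\delta_{m_s(d\lambda) ds})$ on $\Omega \times \m$ and verifies through It\^o's formula that $(N_{t,s}(\varphi))_{s \geq t}$ is a $P$-martingale for $(\cG_r)$, so that $P \in \cK(t,\omega)$ with $P \circ X^{-1} = Q$. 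The existence of such a kernel is the crux of the argument: by continuity of $(\mu, \sigma\sigma^*)$ and compactness of $\Lambda$, the correspondence $(x,b,c) \mapsto \{\lambda \in \Lambda : \mu(x,\lambda) = b,\ \sigma\sigma^*(x,\lambda) = c\}$ has a closed graph on the set where $(b,c) \in \Theta(x)$, so if a Dirac selection is available we use the Jankov--von Neumann theorem; in general, convexity of $\Theta(x)$ together with Carath\'eodory's theorem allows us to write $(b^Q_s, c^Q_s)$ as a finite convex combination of points in the image, and a measurable selection then produces the required $m_s$.

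The main obstacle will be the measurable (in fact, predictable) selection in the reverse inclusion; this is exactly the point where the convexity of $\Theta(x)$ is indispensable and where the relaxed (as opposed to strong) control framework earns its keep. Since all of this is worked out in detail in \cite[Section~5]{CN_23_arxiv_jumps} for the more general path-dependent setting with jumps, I would conclude the proof by specialising that result to the Markovian continuous setting considered here, which avoids reproducing the lengthy selection argument.
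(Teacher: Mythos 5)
Your proposal is correct in substance and ultimately rests on the same external input as the paper: the equivalence, established in \cite[Theorem~5.1]{CN_23_arxiv_jumps}, between relaxed control rules and the set of semimartingale laws whose characteristics lie in \(\Theta(X_\cdot)\). The difference is one of packaging. The paper does not argue the two inclusions at a general \((t,\omega)\) at all: it rewrites \(\cU(t,\omega)\) as \(\{P \colon P([\omega]_t)=1,\ \bar P_t \in \mathcal{R}(\omega(t))\}\) with \(\mathcal{R}(x)\) the Markovian set of laws on \(C(\bR_+;\bR^d)\) started at \(x\), and then combines \cite[Theorem~5.1]{CN_23_arxiv_jumps} with the shift identity \eqref{eq: K shift identity projection} from the proof of Theorem~\ref{theo: CE relax homogeneous} to identify \(\mathcal{R}(\omega(t))\) with \(\{\bar P_t \colon P \in \cK(t,\omega)\}\). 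Your route instead unpacks what that citation proves — disintegration \(M(ds,d\lambda)=m_s(d\lambda)\,ds\), averaged characteristics, and the measurable-selection (mimicking) step — which is more self-contained but reproduces an argument the paper deliberately outsources; the paper's reduction via homogeneity is shorter and also makes the two-sided time index disappear cleanly.

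One conceptual slip worth correcting: you locate the indispensability of convexity of \(\Theta(x)\) in the reverse inclusion \(\subset\), but it is in fact the inclusion \(\supset\) that needs it. There, the projected law has the \emph{averaged} characteristics \(\int_\Lambda(\mu,\sigma\sigma^*)(X_s,\lambda)\,m_s(d\lambda)\), which a priori only lie in the closed convex hull of \(\Theta(X_s)\) (compactness of \(\Lambda\) plus continuity gives closedness, and convexity then puts the barycenter back in \(\Theta(X_s)\) — you use this correctly in your second paragraph). For \(\subset\), since \((b^Q_s,c^Q_s)\in\Theta(X_s)\) means it is \emph{exactly} in the image of \(\Lambda\), a Dirac selection suffices and no convexity or Carath\'eodory decomposition is needed; only the measurable selection is at stake there. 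This does not invalidate the argument, but the final paragraph as written misstates where the hypothesis is consumed.
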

\begin{proof}
Defining
\begin{align*}
\mathcal{R} (x) := \Big\{ P \in \mathfrak{P} (C (\bR_+; \bR^d)) \colon &P (X_0 = x) = 1, \, P \in \fPas \text{with } (P \otimes \llambda)\text{-a.e. } (b^P, c^P) \in \Theta \Big\}, 
\end{align*}
we observe that
\begin{align} \label{eq: rep U}
    \cU (t, \omega) = \Big\{ P \in \mathfrak{P} (\Omega) \colon P ([\omega]_t) = 1, \, \bar{P}_t \in \mathcal{R} (\omega (t))  \Big\}.
\end{align}
    We deduce from \cite[Theorem~5.1]{CN_23_arxiv_jumps} and \eqref{eq: K shift identity projection} that 
\begin{align} \label{eq: R = KR 1}
    \mathcal{R} (\omega (t)) = \Big\{ \bar{P}_0 \colon P \in \cK (0, \theta_t (\omega)) \Big\} = \Big\{ \bar{P}_t \colon P \in \cK (t, \omega) \Big\}.
\end{align}
The identities \eqref{eq: rep U} and \eqref{eq: R = KR 1} yield the claim.
\end{proof}

\begin{theorem} \label{theo: stochastic representation}
Assume that Condition~\ref{cond: lipschitz} holds. Let \(\cE\) be a Markovian homogeneous sublinear expectation that is continuity preserving. Denote the associated Markovian semigroup by \((T_t)_{t \geq 0}\) and its pointwise generator by \((A, D(A))\).
Assume that \((t, x) \mapsto T_t (f) (x)\) is continuous for all \(f \in C_b (\bR^d; \bR)\), \(C_b^\infty (\bR^d; \bR) \subset D (A)\) and 
    \(
    A = H \text{ on } C_b^\infty (\bR^d; \bR).
    \)
Then, 
\begin{align} \label{eq: stochastic representation equality}
    \cE_t (\varphi) (\omega) = \sup_{P \in \cU (t, \omega)} E^P \big[ \varphi \big]
\end{align}
for all \((t, \omega) \in \bR \times \Omega\) and \(\varphi \in \USA_b (\Omega; \bR)\).
\end{theorem}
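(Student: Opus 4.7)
Define the candidate
\[
\hcE_t(\varphi)(\omega) := \sup_{P \in \cU(t,\omega)} E^P\big[\varphi\big], \quad (t,\omega) \in \bR \times \Omega,\ \varphi \in \USA_b(\Omega;\bR),
\]
so that the target identity is the equality $\cE = \hcE$. The plan is to show that $\hcE$ belongs to the same structural class as $\cE$, and then use the comparison result Theorem~\ref{theo: 1DD} to reduce the equality of the expectations to the equality of the associated Markovian semigroups on the state space $\bR^d$; this last equality will be derived from uniqueness of viscosity solutions to the HJB equation with Hamiltonian $H$.

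First I would check that $\hcE$ is a Markovian homogeneous sublinear expectation that is continuity preserving and has no fixed times of discontinuity. By Lemma~\ref{lem: control representation}, $\hcE$ coincides with the value function of the relaxed control problem from Section~\ref{sec: example relaxed control} applied to the Markovian coefficients $\mu(\omega(t),\lambda)$, $\sigma(\omega(t),\lambda)$ with $h\equiv 0$. Theorem~\ref{theo: CE control setting} then yields that $\hcE$ is a convex expectation; positive homogeneity is clear from the supremum representation, so $\hcE$ is sublinear. Theorem~\ref{theo: CE relax homogeneous} gives homogeneity, continuity of paths in $\Omega = C(\bR;\bR^d)$ rules out fixed times of discontinuity, and the Markov property plus continuity preservation follow from Berge's maximum theorem combined with the upper hemicontinuity of $\cK$ established in Lemma~\ref{lem: properties relaxed controls}. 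Writing $\hT_t(f)(x) := \hcE_0(f(X_t))(\omega)\big|_{\omega(0)=x}$ for the Markovian semigroup of $\hcE$, applying Theorem~\ref{theo: 1DD} in both directions reduces the desired equality~\eqref{eq: stochastic representation equality} to proving $T_t = \hT_t$ on $C_b(\bR^d;\bR)$ for every $t > 0$.

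Next I would identify the pointwise generator of $\hT$. Using an It\^o-type expansion under any $P \in \cK(0,\omega)$ together with the martingale defining property and the convexity of $\Theta(x)$, one verifies that $C_b^\infty(\bR^d;\bR) \subset D(\hat A)$ with $\hat A = H$ on this core; this is essentially the dynamic programming analysis of~\cite{ElKa15} together with the PDE identification of~\cite{CN_23_EJP}. Moreover, continuity of $(t,x) \mapsto \hT_t(f)(x)$ follows again from Berge's theorem. Hence both $u(t,x) := T_t(f)(x)$ and $\hat u(t,x) := \hT_t(f)(x)$ are bounded continuous functions sharing the same pointwise generator identity on the smooth core $C_b^\infty(\bR^d;\bR)$.

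Finally I would close by a semigroup uniqueness argument. From the semigroup property and the pointwise-generator identity $A = H = \hat A$ on $C_b^\infty(\bR^d;\bR)$, combined with the continuity of $u$ and $\hat u$, a standard test-function argument shows that both $u$ and $\hat u$ are viscosity solutions of the Hamilton--Jacobi--Bellman equation $\partial_t v = H(v)$ on $(0,\infty)\times\bR^d$ with initial datum $f$. Under Condition~\ref{cond: lipschitz}, the Crandall--Ishii--Lions comparison principle for this equation in the class of bounded continuous viscosity solutions forces $u = \hat u$, whence $T_t = \hT_t$ on $C_b(\bR^d;\bR)$ for every $t > 0$, and Theorem~\ref{theo: 1DD} gives $\cE = \hcE$ on $\USA_b(\Omega;\bR)$. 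The main obstacle is the transition from the pointwise generator identity $A = H$ on $C_b^\infty$ to the viscosity property of $T$: only the semigroup property and continuity of $t \mapsto T_t(f)(x)$ are available, and care is required to convert the $h \downarrow 0$ limit along smooth test functions into genuine viscosity sub/supersolution inequalities suitable for comparison.
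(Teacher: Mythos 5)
Your proposal follows essentially the same route as the paper: reduce via Theorem~\ref{theo: 1DD} (applied in both directions, after verifying that \(\hcE\) is a Markovian homogeneous sublinear expectation that is continuity preserving) to the identity \(T_t = \hT_t\) on \(C_b(\bR^d;\bR)\), and settle that by viscosity uniqueness for the HJB equation. The one step you flag as the main obstacle — passing from the pointwise generator identity \(A=H\) on \(C^\infty_b\) to the viscosity (sub/super)solution property of \(T\) — is exactly where the paper invokes \cite[Proposition~4.10]{hol16}, and for \(\hT\) the paper does not compute a generator at all but cites directly that the control value function is the unique bounded viscosity solution (\cite[Theorem~3.7]{CN_23_arxiv_viscosity}), so your extra generator identification for \(\hT\) is unnecessary but harmless.
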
 
\begin{proof}
First, it follows from the representation in Lemma~\ref{lem: control representation}, Theorem~\ref{theo: CE control setting} and Theorem~\ref{theo: CE relax homogeneous} that
\[
\hat{\cE}_t (\varphi) (\omega) := \sup_{P \in \cU (t, \omega)} E^P \big[ \varphi \big]
\]
is a homogeneous sublinear expectation (that trivially has no fixed times of discontinuity as \(\Omega = C (\bR; \bR^d)\)). 

\smallskip
Second, we argue that \(\hat{\cE}\) is Markovian. 
On the one hand, for \(g \in C_b (\bR^d; \bR) \) and \(t > 0\), 
\[
\hat{\cE}_0 (g (X_t)) (\omega) = \sup_{P \in \cU (0, \omega)} E^{\bar{P}_0} \big[ g (X_{t})\big] \leq \sup_{P \in \mathcal{R} (\omega (0))} E^P \big[ g (X_{t})\big].
\]
On the other hand, for \(P \in \mathcal{R} (\omega (0))\), we have \(P \circ (\omega \, \otimes_0 \, X)^{-1} \in \cU (0, \omega)\) and consequently, 
\[
\sup_{P \in \mathcal{R} (\omega (0))} E^P \big[ g (X_{t})\big] = \sup_{P \in \mathcal{R} (\omega (0))} E^{P\, \circ\, (\omega\, \otimes_0\, X)^{-1}} \big[ g (X_{t})\big] \leq \hat{\cE}_0 (g (X_t)) (\omega). 
\]
In summary, 
\[
\hat{\cE}_0 (g (X_t)) (\omega) = \sup_{P \in \mathcal{R} (\omega (0))} E^P \big[ g (X_t) \big]. 
\]
This shows that \(\hat{\cE}\) is Markovian. In particular, we identified 
\[
\hat{T}_t (g) (x) = \sup_{P \in \mathcal{R} (x)} E^P \big[ g (X_t) \big]
\]
as the associated Markovian semigroup of \(\hat{\cE}\). 

Third, we verify that \(\hat{\cE}\) is continuity preserving. It follows form \cite[Theorem~4.4 and Theorem~4.7]{CN_23_EJP} that the correspondence \(x \mapsto \mathcal{R}(x)\) is continuous (in the set-valued sense) with nonempty compact values. By virtue of the representation of \(\hat{T}\) from the previous step, for every \(n \in \mathbb{N}\), \(g \in C_b (\bR^{d \, (n + 1)}; \bR)\) and \(t > 0\), Berge's maximum theorem (cf. \cite[Theorem~17.31]{charalambos2013infinite}) yields that
\[
(x_1, \dots, x_{n}) \mapsto \hat{T}_t (g (x_1, \dots, x_{n}, \, \cdot \,)) (x_{n}) = \sup_{P \in \mathcal{R} (x_{n})} E^P \big[ g (x_1, \dots, x_{n}, X_t)\big]
\]
is continuous. This shows that \(\hat{\cE}\) is continuity preserving.

Finally, we are in the position to complete the proof. 
By Theorem~\ref{theo: 1DD}, \(\cE = \hat{\cE}\) on \(\USA_b (\Omega; \bR)\) follows from \(T = \hat{T}\) on \(C_b (\bR^d; \bR)\). 
We use a viscosity method to establish the latter.
Take \(f \in C_b (\bR^d; \bR)\).
By \cite[Theorem~3.7]{CN_23_arxiv_viscosity}, \((t, x) \mapsto \hat{T}_t (f) (x)\) is the unique bounded viscosity solutions to the Hamilton--Jacobi--Bellman PDE
    \begin{align} \label{eq: HJB}
    \frac{du}{dt} - H (u) = 0, \quad u (0, \cdot \,) = f.
    \end{align}
Thanks to our hypothesis, \cite[Proposition~4.10]{hol16} yields that \((t, x) \mapsto T_t (f) (x)\) is also a bounded viscosity solution to \eqref{eq: HJB}. Hence, we obtain that \(T_t (f) (x) = \hat{T}_t (f) (x)\) for all \((t, x) \in \bR_+ \times \bR^d\).  
\end{proof}

\begin{discussion} \label{diss: G vs relax}
\begin{enumerate}

  \item[\textup{(a)}] Theorem~\ref{theo: represntation convex} relates the infinitesimal description of a sublinear semigroup on the state space to a sublinear expectation on the path space. In the realm of classical linear Markov process theory, this corresponds to the connection of linear semigroups and Feller processes.

  \item[\textup{(b)}] Building on the dynamic programming principle that holds for the above control setting (cf.~\cite{CN_23_EJP}), Theorem~\ref{theo: represntation convex} extends the time consistency property \ref{NE6} from deterministic times to stopping times, i.e.,  
\[ 
\cE_\tau (\cE_\rho) = \cE_\tau 
\]
for arbitrary finite stopping times \(\tau \leq \rho\). In the context of classical linear Markov process theory, this corresponds to the fact that Feller processes have the strong Markov property.

  \item[\textup{(c)}]  
The Lipschitz assumption from Condition~\ref{cond: lipschitz} cannot be dropped. Indeed, the Stroock--Varadhan example (cf. \cite[Exercise~12.4.2]{SV}) for a martingale problem without Feller selection shows that the associated Markovian semigroup of the r.h.s. in \eqref{eq: stochastic representation equality} lacks the necessary continuity properties.
Let us detail the example. We take \(\Lambda = \{0\}, d = 1, \sigma \equiv 0\) and \(b (x, \lambda) \equiv b^* (x)\) such that \(b^*\in C_b (\bR; \bR)\) with \(b^* (x) = \on{sgn} (x) \sqrt{|x|}\) for \(x \in [-1,1]\) and \(b^*\) is continuously differentiable outside \((-1, 1)\). Then, \(\cU \in \cR\) due to \cite[Theorem~4.4, Lemmata~6.6, 6.17, Corollary~6.12]{CN_23_EJP}, and 
\[
\hat{\cE}_t (\phi) (\omega) := \sup_{P \in \cU (t, \omega)} E^P \big[ \varphi \big]
\]
defines a convex expectation by Theorem~\ref{theo: represntation convex}. Furthermore, it follows as in the proof of Theorem~\ref{theo: stochastic representation} that \(\cE\) is Markovian and homogeneous. 
As explained in \cite[Exercise~12.4.2]{SV}, for \(\omega \in \Omega\) with \(\omega (0) \not = 0\), \(\cU (0, \omega) = \{P_{\omega (0)}\}\) such that \(\bR \setminus \{0\} \ni x \mapsto P_x\) has different limits at zero when going through positive or negative values. In particular, it is clear that each \(P_x\) is a Dirac measure. As a consequence, the associated Markovian semigroup \((\hat{T}_t)_{t \geq 0}\) of \(\hat{\cE}\) lacks continuity in the initial value, i.e., there exists an \(t > 0\) and a function \(f \in C_b (\bR; \bR)\) such that \(x \mapsto T_t (f) (x)\) is discontinuous. This shows that the claimed identity from Theorem~\ref{theo: stochastic representation} cannot hold. 
\item[\textup{(d)}]
 Recalling Discussion~\ref{diss: NVH example}, the identity \eqref{eq: stochastic representation equality} might fail without the convexity assumption from Condition~\ref{cond: lipschitz}. As mentioned above, the relaxed control framework convexifies random \(G\)-expectations. In this regard it is natural to ask whether it is possible to drop the convexity part from Condition~\ref{cond: lipschitz} in case \eqref{eq: stochastic representation equality} is replaced by 
\[
\cE_t (\varphi) = \sup_{P \in \cK (t, \omega)} E^P \big[ \varphi (X) \big].
\]
The answer to this question is affirmative. Indeed, adapting arguments from \cite{nicole1987compactification, K90} on the convergence properties of relaxed control rules, one can prove this identity. As the full argument is quite technical, we will not detail it here.

\end{enumerate}
    
\end{discussion}

\section{A Laplace principle for controlled diffusion processes} \label{sec: application} 
In this section, we illustrate an application of our results to a small-noise asymptotic analysis of controlled diffusions by establishing a Laplace principle on path space. A key step is a stochastic control representation of the entropic risk measure. To obtain this representation, we use our comparison principle from Theorem~\ref{theo: 1DD} to reduce the problem to a standard Markovian control setting, in which the value functions are characterized by HJB equations and can be treated by classical viscosity arguments. 
To set up our framework, take \(d, r \in \mathbb{N}\), set \(\Omega := C (\bR; \bR^d)\), let \(\Lambda\) be a compact metrizable space and consider the coefficients
\[\mu^\varepsilon \colon \bR^d \times \Lambda \to \bR^d, \qquad
 \sigma^\varepsilon \colon \bR^d \times \Lambda \to \bR^{d \times r}, 
\]
which are indexed over \(\varepsilon > 0\). Throughout this section, for every \(\varepsilon > 0\), we assume:
\begin{enumerate}
    \item[\((i)^\varepsilon\)] The coefficients \(\mu^\varepsilon\) and \(\sigma^\varepsilon\) are continuous. 
    \item[\((ii)^\varepsilon\)] There exists a constant \(C = C (\varepsilon) > 0\) such that, for all \(\lambda \in \Lambda\) and \(x, y \in \bR^d\),  
    \begin{equation} \label{eq: Lipschitz const}
    \begin{split}
\| \mu^\varepsilon (x, \lambda) - \mu^\varepsilon (y, \lambda) \| + \| \sigma^\varepsilon (x, \lambda) - \sigma^\varepsilon (y, \lambda) \| &\leq C \| x - y \|,
\\
\| \mu^\varepsilon (x, \lambda) \| + \| \sigma^\varepsilon (x, \lambda) \| &\leq C.
\end{split}
\end{equation} 
\end{enumerate}
For \((t, \omega) \in \bR \times \Omega\), let \(\cK^\varepsilon (t, \omega)\) be the set of all relaxed control rules from \eqref{eq: KR} as defined in Section~\ref{sec: example relaxed control}, but associated to the Markovian coefficients \((t, \omega, \lambda) \mapsto \mu^\varepsilon (\omega (t), \lambda)\) and \((t, \omega, \lambda) \mapsto \sigma^\varepsilon (\omega (t), \lambda)\). Furthermore, let 
\[
{\overline{\cE}}^\varepsilon (\varphi) (t, \omega) := \sup_{P \in \cK^\varepsilon (t, \omega)} E^P \big [ \varphi (X) \big], \quad (t, \omega) \in \bR \times \Omega, \ \varphi \in \USA_b (\Omega; \bR), 
\]
be the associated sublinear expectation; cf.~Theorem~\ref{theo: CE control setting}.
In the remainder of this section, we investigate the convergence of the entropic risk measures 
\[
\cE^\varepsilon (\varphi) (t, \omega) := \varepsilon \log \Big\{ {\overline{\cE}}^\varepsilon \Big( \exp \Big\{ \frac{\varphi (X)}{\varepsilon} \Big\} \Big) (t, \omega) \Big\}, \quad \varphi \in \USA_b (\Omega; \bR), \ (t, \omega) \in \bR \times \Omega, 
\]
as \(\varepsilon \to 0\). To propose a limiting object, let 
\[\mu^0 \colon \bR^d \times \Lambda \to \bR^d, \qquad
\sigma^0 \colon \bR^d \times \Lambda \to \bR^{d \times r},
\]
be two coefficients that satisfy \((i)^0\) and \((ii)^0\) above, i.e., they are bounded, continuous, and Lipschitz continuous in the first variable (uniformly in the second variable). 
The following Laplace principle is the main result in this section.
\begin{theorem} \label{theo: laplace principle} 
    Assume that 
    \[
    \| \mu^\varepsilon - \mu^0 \|_\infty + \| \varepsilon^{- 1/ 2} \sigma^\varepsilon - \sigma^0 \|_\infty \to 0, \quad \text{as }\varepsilon \to 0.
    \]
    Then, for all \((t, \omega) \in \bR \times \Omega\) and \(\varphi \in C_b (\Omega; \bR)\) such that \(\varphi = \varphi (X_{\cdot \wedge T})\) for some \(T > t\), 
    \begin{equation} \label{eq: det equ}
\begin{split}
\cE^\varepsilon (\varphi) (t, \omega) \to &\sup \Big\{ \varphi (X^{t, \omega}) - \frac{1}{2} \int_{- \infty}^\infty \|a_s\|^2 \, ds \colon \\ &\hspace{2cm}a \colon \bR \to \bR^r, \lambda \colon \bR \to \Lambda \text{ Borel measurable}, \phantom \int 
\\&\hspace{2cm}d X^{t, \omega}_s = (\mu^0(X^{t, \omega}_s, \lambda_s) + \sigma^0 (X^{t, \omega}_s, \lambda_s) \, a_s ) \, ds, \, s > t, \phantom \int 
\\&\hspace{2cm} X^{t, \omega}_s = \omega (s), \, s \leq t \, \Big\}, 
\end{split}
\end{equation} 
as \(\varepsilon \to 0\).
\end{theorem}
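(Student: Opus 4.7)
The plan is to reduce the Laplace principle on the path space to a convergence statement for the associated Markovian semigroups on $\bR^d$, and then to establish that convergence through a Hopf--Cole transform combined with viscosity solution arguments for the resulting Hamilton--Jacobi--Bellman equations.

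\textbf{Setting up the Markovian framework.} First, I would verify that each $\cE^\varepsilon$ is a homogeneous Markovian continuity-preserving convex expectation without fixed times of discontinuity. The sublinear $\overline{\cE}^\varepsilon$ inherits these properties from Theorems~\ref{theo: CE control setting} and~\ref{theo: CE relax homogeneous}, together with a Berge-type argument for continuity-preservingness (analogously to Example~\ref{ex: nonlinear Levy Markovian}); the entropic transform $\varphi \mapsto \varepsilon \log \overline{\cE}^\varepsilon(e^{\varphi/\varepsilon})$ then preserves the axioms of Definition~\ref{def: sublinear expectation} as well as the Markovian and continuity-preserving structure. In parallel, I would introduce the right-hand side of~\eqref{eq: det equ} as the candidate limit functional $\cE^0$ and argue that it too is a homogeneous Markovian continuity-preserving convex expectation without fixed times of discontinuity, by reformulating it as a relaxed-control value function of Section~\ref{sec: example relaxed control} on the enlarged compact control space $\Lambda \times \overline{B_R(0)}$ with drift $\mu^0 + \sigma^0 a$, zero diffusion, and running cost $h(\lambda,a) = \tfrac12 \|a\|^2$, then passing $R \to \infty$ using boundedness of $\varphi$.

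\textbf{State-space convergence via Hopf--Cole.} Set $T^\varepsilon_t(f)(x) := \cE^\varepsilon_0(f(X_t))(\omega)|_{\omega(0) = x}$ and define $T^0_t$ analogously. Using the viscosity characterization of the sublinear Markovian semigroup $\overline{T}^\varepsilon$ from~\cite{CN_23_arxiv_viscosity} and the Hopf--Cole transform $u^\varepsilon := \varepsilon \log \overline{T}^\varepsilon_t(e^{f/\varepsilon})$, a direct computation shows that $u^\varepsilon = T^\varepsilon_t(f)$ is the unique bounded viscosity solution of
\[
\partial_t u = \sup_{\lambda \in \Lambda}\Big\{ \langle \mu^\varepsilon(x,\lambda), \nabla u \rangle + \tfrac{1}{2}\on{tr}\big[\sigma^\varepsilon(\sigma^\varepsilon)^*(x,\lambda)\,\nabla^2 u\big] + \tfrac{1}{2\varepsilon}\big|(\sigma^\varepsilon(x,\lambda))^* \nabla u\big|^2 \Big\}, \quad u(0,\cdot) = f.
\]
Under the scaling $\|\mu^\varepsilon - \mu^0\|_\infty + \|\varepsilon^{-1/2}\sigma^\varepsilon - \sigma^0\|_\infty \to 0$, the second-order term is of order $\|\sigma^\varepsilon\|_\infty^2 = O(\varepsilon) \to 0$ while $\varepsilon^{-1}\sigma^\varepsilon(\sigma^\varepsilon)^* \to \sigma^0(\sigma^0)^*$, so the Hamiltonians converge locally uniformly to the first-order convex Hamiltonian
\[
H^0(x,p) = \sup_{\lambda \in \Lambda, a \in \bR^r}\Big\{ \langle \mu^0(x,\lambda) + \sigma^0(x,\lambda) a, p\rangle - \tfrac{1}{2}\|a\|^2 \Big\},
\]
where the supremum over $a$ arises via Legendre duality. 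The associated HJB equation has $u^0 = T^0_t(f)$ as unique bounded viscosity solution under the Lipschitz condition $(ii)^0$, and a standard Barles--Perthame half-relaxed limit argument gives $u^\varepsilon \to u^0$ locally uniformly.

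\textbf{Lifting to the path space.} The delicate step, and the main obstacle, is converting the state-space convergence into path-space convergence: Theorem~\ref{theo: 1DD} provides a comparison rather than a convergence, so I would proceed by replaying its inductive proof while propagating limits. Using Remark~\ref{rem: regular cond property convex}, the tower property~\ref{NE6}, and the homogeneity~\eqref{eq: cE homog}, each finite-dimensional marginal $\cE^\varepsilon_0(g(X_{t_1},\dots,X_{t_n}))(\omega)$ can be recursively expressed through iterated applications of $T^\varepsilon$, so local uniform convergence of $T^\varepsilon$ together with the continuity-preserving property of $\cE^0$ yields convergence of all finite-dimensional marginals. Since any $\varphi \in C_b(\Omega;\bR)$ with $\varphi = \varphi(X_{\cdot\wedge T})$ can be approximated uniformly on tight sets of paths by cylinder functions depending on finitely many times in $[0,T]$, a standard approximation combined with the continuity properties~\ref{NE5} of $\cE^\varepsilon$ and $\cE^0$ then extends the convergence to the full class of test functions, yielding~\eqref{eq: det equ}.
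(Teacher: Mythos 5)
Your setup (Markovian structure of \(\cE^\varepsilon\), the Hopf--Cole transform, and the identification of the limiting Hamiltonian via Legendre duality) coincides with the ingredients of the paper's Lemma~\ref{lem: iden sensitivity}. But the paper uses the PDE only at \emph{fixed} \(\varepsilon\), to identify \(\cE^\varepsilon\) with the relaxed-control value function \(\widehat{\cE}^\varepsilon\) (two bounded viscosity solutions of the \emph{same} equation), and then proves the convergence \(\widehat{\cE}^\varepsilon(\varphi)\to\widehat{\cE}^0(\varphi)\) for general continuous path functionals directly on path space by a coupling argument: for each control rule it builds the \(\varepsilon\)-dynamics and the limiting dynamics on one probability space driven by the same martingale measure, and estimates their distance via the M\'etivier--Pellaumail stability theorem together with a uniform-in-\(\varepsilon\) tightness bound. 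You instead pass to the limit \(\varepsilon\to 0\) in the PDE on the state space and then try to lift the semigroup convergence back to path space. It is precisely this lifting step that contains a genuine gap.

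Concretely, Theorem~\ref{theo: 1DD} is a comparison statement between two fixed expectations, not a stability statement, and ``replaying its inductive proof while propagating limits'' requires two uniformity properties that your argument does not supply. First, in the inductive step the inner datum is the family \(f_\theta:=g(x_1,\dots,x_{n-1},\,\cdot\,)\) indexed by \(\theta=(x_1,\dots,x_{n-1})\); you need \(\sup_{\theta\in K}|T^\varepsilon_t(f_\theta)(x_\theta)-T^0_t(f_\theta)(x_\theta)|\to 0\) on compacta \(K\). The Barles--Perthame half-relaxed limit gives locally uniform convergence for a \emph{fixed} initial datum, and since \(\theta\mapsto f_\theta\) is in general not continuous into \((C_b(\bR^d;\bR),\|\cdot\|_\infty)\) (take \(g(x,y)=\sin(xy)\)), uniformity over \(\theta\) does not follow from the \(1\)-Lipschitz property of \(T^\varepsilon_t\) in the sup norm; one needs an equicontinuity estimate uniform in \((\varepsilon,\theta)\), which in practice is exactly the coupling/modulus-of-continuity estimate the paper derives probabilistically. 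Second, to pass the merely locally uniform convergence \(G^\varepsilon\to G^0\) of the iterated kernels through the outer supremum over representing measures, and likewise to justify the approximation of \(\varphi=\varphi(X_{\cdot\wedge T})\) by cylinder functions ``uniformly on tight sets of paths'', you need tightness of the representing measures \emph{uniformly in \(\varepsilon\)} (the paper's bound \eqref{eq: tightness bound}, obtained from the moment estimate \eqref{eq: moment bound} and Aldous' criterion); the monotone continuity property \ref{NE5} that you invoke is not the relevant tool here, since uniform-on-compacta approximation is not monotone. Both gaps are repairable, but only by importing essentially the same stochastic stability and tightness estimates on which the paper's direct path-space argument rests, so the detour through finite-dimensional marginals does not actually save work.
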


\begin{discussion}
Our proof for Theorem~\ref{theo: laplace principle} proceeds in two steps. First, we establish a stochastic control representation of the entropic risk measure \(\cE^\varepsilon\) and its proposed limit. This representation is the key ingredient: it allows us, in the second step, to apply stochastic calculus techniques to prove the asserted convergence and provides the foundation for a Laplace principle that yields convergence for continuous functions on the path space. The proof for the stochastic representation crucially relies on our comparison Theorem~\ref{theo: 1DD}, which reduces the identity on path space to a simpler Markovian setting that can be analyzed by classical viscosity methods.

Our methodology is close in spirit to analytic viscosity approaches to large deviations (cf., e.g., \cite[Chapter~VI]{FleSon_06}) and to more recent BSDE-based methods (cf.~\cite{MRTZ_16}). As in the viscosity approach, we rely on comparison results; however, instead of directly using these results to deduce a Laplace principle for specific functionals with running and terminal parts, we combine them with our comparison Theorem~\ref{theo: 1DD} to obtain a stochastic representation on path space. The BSDE approach of~\cite{MRTZ_16} likewise rests on a stochastic representation, but it is derived via BSDE techniques.

Our approach extends naturally to more general settings, such as non-local jump-diffusion frameworks. It also worth to mention that the uniform convergence assumptions in Theorem~\ref{theo: laplace principle} may be relaxed considerably. At this stage, our goal is to illustrate the method rather than to optimize the convergence conditions.
\end{discussion}

\begin{remark} 
(i) 
The assumptions of Theorem~\ref{theo: laplace principle} are trivially satisfied for \(\mu^\varepsilon \equiv \mu^0\) and \(\sigma^\varepsilon \equiv \sqrt{\varepsilon} \sigma^0\). Moreover, taking the action space \(\Lambda\) as singleton shows that classical uncontrolled diffusion frameworks are covered by our setting.
These observations relate Theorem~\ref{theo: laplace principle} to classical Freidlin--Wentzell large deviation results as given in the monographs~\cite{DZ_98,dupuisellis,FleSon_06}. Our analysis also shares common ground with the small-noise analysis of risk-sensitive control problems from \cite{J_92} and \cite[Section~XI.7]{FleSon_06}, although our limiting object is a control problem rather than a differential game.
As explained in \cite{CN_23_arxiv_jumps}, many stochastic models under uncertainty can be translated into the relaxed control framework, including the prominent \(G\)-Brownian motion framework. This relates Theorem~\ref{theo: laplace principle} to the large deviation principle for \(G\)-Brownian motion that has been established in \cite{GJ_10}.

(ii) It is worth to mention that in classical linear frameworks Varadhan's lemma and Bryc's inverse (cf. \cite[Theorems~4.3.1, 4.4.2]{DZ_98}) relate the Laplace principle to general large deviation principles. For related results in a sublinar expectation setting, see the Appendix of \cite{GJ_10}.
\end{remark} 

The remainder of this section is dedicated to the proof of Theorem~\ref{theo: laplace principle}. We first derive the relaxed control representation for \(\cE^\varepsilon\) and then, based on this representation, analyze its convergence. 

Let \(\ovM\) be the set of all Radon measures \(\overline{M}\) on \((\bR \times \bR^r \times \Lambda, \mathcal{B} (\bR) \otimes \mathcal{B} (\bR^r) \otimes \mathcal{B} (\Lambda))\) such that
\[
\forall \, T > 0 \colon\ \ \int_{- T}^T \int_{\bR^r \times \Lambda} \| x \|^2 \, \overline{M} (ds, dx, d \lambda) < \infty, \quad \overline{M} (ds \times \bR^r\times \Lambda) = ds, 
\]
and endow \(\ovM\) with the local in time weak topology, which turns it into a Polish space. Now, similar to Section~\ref{sec: example relaxed control}, define \(\ocK^\varepsilon (t, \omega)\) to be the set of all relaxed control rules associated to the controlled equation
\[
d Y_s = \int_{\bR^r \times \Lambda} \Big( \mu^\varepsilon (Y_s, \lambda) + \frac{\sigma^\varepsilon (Y_s, \lambda)}{\sqrt{\varepsilon}} \, x \Big) \, \overline{m}_s(dx, d \lambda) \, ds + \sqrt{ \int_{\bR^r \times \Lambda} \sigma^\varepsilon (\sigma^\varepsilon)^* (Y_s, \lambda) \, \overline{m}_s (dx, d \lambda) } \, d W_s, 
\]
where the relaxed control \(\overline{M} (ds, dx, d \lambda) = \overline{m}_s (dx, d \lambda) \, ds\) is taken from \(\ovM\) and \(W = (W_s)_{s \geq 0}\) is a \(d\)-dimensional standard Brownian motion.
For \(\varphi \in \USA_b (\Omega; \bR)\), we define 
\[
\widehat{\cE}^\varepsilon (\varphi) (t, \omega) := \sup_{P \in \ocK^\varepsilon (t, \omega)} E^P \Big[ \varphi (X) - \frac{1}{2} \int_{- \infty}^\infty \int_{\bR^r \times \Lambda} \| x \|^2 \, \overline{M} (ds, dx, d \lambda) \Big ], \quad (t, \omega) \in \bR \times \Omega.
\]
\begin{remark}
    For \(\varphi \in \USA_b (\Omega; \bR)\) with \(\varphi = \varphi (X_{\cdot \wedge T})\) and any \((t, \omega) \in (- \infty, T] \times \Omega\), notice that
    \[
    \widehat{\cE}^\varepsilon (\varphi) (t, \omega) = \sup_{P \in \ocK^\varepsilon (t, \omega)} E^P \Big[ \varphi (X) - \frac{1}{2} \int_{t}^T\int_{\bR^r \times \Lambda} \| x \|^2 \, \overline{M} (ds, dx, d \lambda) \Big ],
    \]
    since we may take the control \(\delta_{(0, \lambda_0)} (dx, d \lambda) \, ds\) outside the set \([t, T] \times \bR^r \times \Lambda\). 
\end{remark}
One may adapt the proof of Theorem~\ref{theo: CE control setting} to show that \(\widehat{\cE}^\varepsilon\) is a convex expectation in the sense of Definition~\ref{def: sublinear expectation}. The only necessary changes in the proof are related to the lack of compactness of the set of control rules \(\ocK^\varepsilon (t, \omega)\) and the unboundedness of the drift coefficient. However, by Aldous' tightness criterion (see~\cite[Theorem~VI.4.5]{JS}; and use \eqref{eq: moment bound} and \eqref{eq: time bound} below) and the discussion in \cite[Section~3.10]{HausLep90} or \cite{B_01},\footnote{By \cite[Theorem~2.5]{B_01} and a projective limit argument (see \cite[Proposition~8, p.~89]{B_98}), for every \(r > 0\), the set \(\{ \overline{m} \in \overline{\mathbb{M}} \colon \int_{- \infty}^\infty \int_{\bR^r \times \Lambda} \| x \|^2 \, \overline{m} (ds, dx, d\lambda) \leq r \}\) is compact in \(\overline{\mathbb{M}}\).} it follows that for any \(R \in \mathbb{R}_+\), the set
\begin{align} \label{eq: restricted moments}
\ocK^\varepsilon_R (t, \omega) := \Big\{ P \in \cK^\varepsilon (t, \omega) \colon E^P \Big[ \int_{- \infty}^\infty \int_{\bR^r \times \Lambda} \|x \|^2 \, \overline{M} (ds, dx, d \lambda) \Big] \leq R \Big\}
\end{align} 
is relatively compact. 
With this observation at hand, the proof can essentially be repeated. Moreover, arguing as in Lemma~\ref{theo: CE relax homogeneous}, one proves that \(\widehat{\cE}^\varepsilon\) is homogeneous. 

The next result gives the announced stochastic representation, deduced from Theorem~\ref{theo: 1DD}.
\begin{lemma} \label{lem: iden sensitivity}
\(\cE^\varepsilon (\varphi) = \widehat{\cE}^\varepsilon (\varphi)\) for all \(\varphi \in \USA_b (\Omega; \bR)\) and \(\varepsilon > 0\).
\end{lemma}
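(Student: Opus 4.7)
The plan is to reduce the claim to a PDE identification on the state space $\bR^d$ via Theorem~\ref{theo: 1DD}. I would proceed in three steps.

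\emph{Step 1 (Framework).} I would first verify that both $\cE^\varepsilon$ and $\widehat{\cE}^\varepsilon$ are Markovian, homogeneous convex expectations without fixed times of discontinuity that are continuity preserving. For $\overline{\cE}^\varepsilon$, these properties follow from Theorems~\ref{theo: CE control setting} and~\ref{theo: CE relax homogeneous} applied to the Markovian coefficients $(t,\omega,\lambda)\mapsto\mu^\varepsilon(\omega(t),\lambda)$ and $(t,\omega,\lambda)\mapsto\sigma^\varepsilon(\omega(t),\lambda)$, together with Berge's maximum theorem and Lemma~\ref{lem: properties relaxed controls} (for continuity preservation). These properties transfer to $\cE^\varepsilon = \varepsilon\log \overline{\cE}^\varepsilon(\exp(\,\cdot\,/\varepsilon))$ via the strict monotonicity of the log--exp transform; in particular, the tower property for $\cE^\varepsilon$ follows at once from that of $\overline{\cE}^\varepsilon$. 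For $\widehat{\cE}^\varepsilon$, the arguments of Theorem~\ref{theo: CE control setting} carry over once one replaces the compactness of $\cK^\varepsilon(t,\omega)$ by that of its level sets \eqref{eq: restricted moments}; homogeneity, the Markov property and continuity preservation then follow exactly as for $\overline{\cE}^\varepsilon$.

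\emph{Step 2 (Common HJB).} Denote the associated Markovian semigroups on $C_b(\bR^d;\bR)$ by $T^\varepsilon_t$ and $\widehat{T}^\varepsilon_t$. I would identify, for every $f\in C_b(\bR^d;\bR)$, both maps $(t,x)\mapsto T^\varepsilon_t(f)(x)$ and $(t,x)\mapsto \widehat{T}^\varepsilon_t(f)(x)$ as bounded viscosity solutions of the common Hamilton--Jacobi--Bellman equation
\begin{equation*}
\partial_t v = \sup_{\lambda\in\Lambda}\Big(\langle \mu^\varepsilon(x,\lambda),\nabla v\rangle + \tfrac{1}{2}\on{tr}[\sigma^\varepsilon(\sigma^\varepsilon)^*(x,\lambda)\nabla^2 v] + \tfrac{1}{2\varepsilon}\|(\sigma^\varepsilon)^*(x,\lambda)\nabla v\|^2\Big),
\end{equation*}
with $v(0,\cdot)=f$. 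For $T^\varepsilon_t$, the analogous HJB for $\overline{T}^\varepsilon_t$ without the quadratic gradient term is classical for relaxed control (cf.~\cite{CN_23_arxiv_viscosity}), and Fleming's logarithmic transformation $v=\varepsilon\log u$ (applied to the strictly positive $u=\overline{T}^\varepsilon_t(e^{f/\varepsilon})$) turns it into the displayed equation while preserving the viscosity property. For $\widehat{T}^\varepsilon_t$, the Hamiltonian arises from the Legendre identity
\begin{equation*}
\sup_{z\in\bR^r}\Big(\langle\sigma^\varepsilon(x,\lambda) z/\sqrt{\varepsilon},\,p\rangle - \tfrac{1}{2}\|z\|^2\Big) = \tfrac{1}{2\varepsilon}\|(\sigma^\varepsilon)^*(x,\lambda) p\|^2,
\end{equation*}
which removes the auxiliary $z$-control from the Bellman equation of the extended relaxed control problem.

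\emph{Step 3 (Comparison and conclusion).} A standard comparison principle for bounded viscosity solutions of quadratic HJB equations with bounded Lipschitz data (cf.~\cite[Chapter~V]{FleSon_06}) yields $T^\varepsilon = \widehat{T}^\varepsilon$ on $C_b(\bR^d;\bR)$. Applying Theorem~\ref{theo: 1DD} in each direction, which is permitted because both expectations are continuity preserving, gives the desired identity $\cE^\varepsilon = \widehat{\cE}^\varepsilon$ on $\USA_b(\Omega;\bR)$. The main obstacle is the viscosity characterization of $\widehat{T}^\varepsilon_t(f)$ in Step~2: since the auxiliary control $z$ takes values in the non-compact set $\bR^r$, the dynamic programming principle has to be combined with an a~priori $L^2$-moment restriction in the spirit of \eqref{eq: restricted moments}, and a careful passage to the limit as the moment bound grows is needed to obtain the quadratic HJB in the viscosity sense. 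The log transform and its viscosity compatibility for $T^\varepsilon_t$ are by now classical (see again \cite{FleSon_06}).
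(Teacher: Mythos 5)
Your proposal is correct and follows essentially the same route as the paper: reduce to the Markovian semigroups via Theorem~\ref{theo: 1DD}, identify \((t,x)\mapsto T^\varepsilon_t(f)(x)\) and \((t,x)\mapsto \widehat{T}^\varepsilon_t(f)(x)\) as bounded viscosity solutions of the same quadratic HJB through the logarithmic transform and the Legendre identity, and conclude by uniqueness. The only points where you are lighter than the paper are Step~1, where continuity preservation of \(\widehat{\cE}^\varepsilon\) is established not by Berge's theorem (which is delicate here since the penalized objective over the non-compact relaxed controls is only semicontinuous) but by an explicit M\'etivier--Pellaumail stability estimate combined with the moment bound \eqref{eq: restricted moments}, and Step~3, where the paper obtains uniqueness for the transformed equation by transferring it from the untransformed HJB via \(z\mapsto\exp\{\varepsilon^{-1}z\}\) rather than invoking a comparison principle for the quadratic equation directly.
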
 
\begin{proof}
The proof is split into two steps. First, we show that \(\cE^\varepsilon\) and \(\widehat{\cE}^\varepsilon\) satisfy the prerequisites of Theorem~\ref{theo: 1DD} and second, we apply this result to deduce the claim. 

{\em Step 1}: 	We discussed already that \(\widehat{\cE}^\varepsilon\) is a homogeneous convex expectation. The same is true for~\(\cE^\varepsilon\), which follows from the corresponding properties of \(\overline{\cE}^\varepsilon\); see Theorems~\ref{theo: CE control setting} and \ref{theo: CE relax homogeneous}. 
The Markovianity of \(\overline{\cE}^\varepsilon\) and \(\widehat{\cE}^\varepsilon\) follows similar to the proof of Theorem~\ref{lem: control representation}. We now explain that \(\widehat{\cE}^\varepsilon\) is continuity preserving;  the argument for \(\overline{\cE}^\varepsilon\) is similar and omitted. At this point, we mention that \(\cE^\varepsilon\) is then also Markovian and continuity preserving, as these properties transfer from \(\overline{\cE}^\varepsilon\).
The associated Markovian semigroup \((\widehat{T}^\varepsilon_t)_{t \geq 0}\) of the convex expectation \(\widehat{\cE}^\varepsilon\) is given by 
\begin{align} \label{eq: M sg application}
\widehat{T}_t^\varepsilon (g) (x) = \sup_{P \in \ocR^\varepsilon (x)} E^P \Big[ g (X_t) - \frac{1}{2} \int_{- \infty}^\infty \int_{\bR^r \times \Lambda} \| z \|^2 \, \overline{M} (ds, dz, d \lambda) \Big], \quad g \in C_b (\bR^d; \bR), 
\end{align} 
where 
\(
\ocR^\varepsilon (x) := \ocK^\varepsilon  (0, \mathbf{x})
\)
with \(\mathbf{x} (s) \equiv x\) for all \(s \in \bR\). 
Take \(g \in C_b (\bR^{nd}; \bR)\), \(T > 0\), \(x, y \in \bR^d\) and \(v, w \in \bR^{(n - 1)d}\). 
We may restrict the optimization in \eqref{eq: M sg application} to the set \(\ocR_R^\varepsilon (x) := \ocK^\varepsilon_R (0, \mathbf{x})\) as defined in \eqref{eq: restricted moments} with \(R = 2 \| g \|_\infty + 1\). For a moment, fix a measure \(P \in \ocR^\varepsilon_R (x)\). 
By the definition of relaxed control rules and the representation result \cite[Theorem~IV-2]{EM_90}, possibly on an extension of the probability space with the probability measure~\(P\), there exist orthogonal martingale measures \(N = (N^k)_{k = 1}^r\) with intensity \(\overline{m}_s (dz, d \lambda) \, ds\) such that \(P\)-a.s. 
\begin{equation} \label{eq: dynamics martingale measure}
\begin{split}
d X_s &=  \int_{\bR^r \times \Lambda} \Big( \mu^\varepsilon (X_s, \lambda) + \frac{\sigma^\varepsilon (X_s,  \lambda)}{\sqrt{\varepsilon}} \, z \Big) \, \overline{m}_s (dz, d \lambda) \, ds + \int_{\bR^r \times \Lambda} \sigma^\varepsilon (X_s, \lambda) \, N (dz, d \lambda, ds), \\ X_0 &= x.
\end{split}
\end{equation} 
Adapting classical methods for SDEs with Lipschitz coefficients (see p. 100 in \cite{EM_90}), on the same filtered probability space there exists an \(\bR^d\)-valued continuous process \(Y = (Y_t)_{t \geq 0}\) such that 
\begin{align*}
d Y_s &=  \int_{\bR^r \times \Lambda} \Big( \mu^\varepsilon (Y_s, \lambda) + \frac{\sigma^\varepsilon (Y_s, \lambda)}{\sqrt{\varepsilon}} \, z \Big) \, \overline{m}_s (dz, d \lambda) \, ds + \int_{\bR^r \times \Lambda} \sigma^\varepsilon (Y_s, \lambda) \, N (dz, d \lambda, ds), 
\\ 
Y_0 &= y.
\end{align*} 
Fix a \(\delta > 0\). 
Precisely as in the proof for the M{\'e}tivier--Pellaumail stability theorem \cite[Theorem, p.~88]{MP_80}, we obtain a stopping time \(\tau = \tau (\delta) \leq T\) and a deterministic function \(\kappa \colon \bR_+ \to \bR_+\) with \(\kappa (z) \to 0\) as \(z \to 0\) such that 
\begin{enumerate} 
\item[(i)] \(P ( \tau < T) \leq \delta\); 
\item[(ii)] \(E^P [ \sup_{t \in [0, \tau]} \| X_t - Y_t \|^2 ] \leq \kappa (\|x - y\|)\).
\end{enumerate} 
The function \(\kappa\) only depends on \(\delta, T, R, \varepsilon\) and the constant \(C\) from \eqref{eq: Lipschitz const}. In particular, it is independent of the choice of \(P\). 
Using the Burkholder--Davis--Gundy inequality, for \(Z = X, Y\), we obtain that
\begin{equation} \label{eq: moment bound} 
\begin{split} 
E^P \Big[ \sup_{s \in [0, T]} \| Z_s \|^2 \Big] &\leq 4 \Big ( T^2 \| \mu^\varepsilon \|^2_\infty + \frac{T \| \sigma^\varepsilon \|^2_\infty}{\varepsilon} E^P \Big[ \int_0^T \int_{\bR^r \times \Lambda} \| z \|^2 \, \overline{m}_s (dz, d\lambda) \, ds \Big] \Big) + 8 T \| \sigma^\varepsilon \|_\infty^2 
\\&\leq 4 \Big( T^2 \| \mu^\varepsilon \|_\infty^2 + \frac{T \| \sigma^\varepsilon \|^2_\infty R}{\varepsilon} \Big) + 8 T \| \sigma^\varepsilon\|_\infty^2 =: C'. 
\end{split}
\end{equation} 
We are in the position to prove the continuity preservation property. 
Take \(\ell > 0\). Using (i) above, we obtain that 
\begin{align*}
     \widehat{T}_T^\varepsilon ( g (v, \cdot \,) ) (x) &- \widehat{T}_T^\varepsilon (g (w, \cdot \, ) ) (y) 
    \\&\leq \sup_{P \in \ocR^\varepsilon_R (x)} E^P \big[ \big| g (v, X_T) - g (w, Y_T) \big| \big] 
    \\&\leq \sup_{P \in \ocR^\varepsilon_R (x)} E^P \big[ \big| g (v, X_T) - g (w, Y_T) \big| \1_{\{ \sup_{t\in [0, T]} \| X_s \| \vee \|Y_s \| > \ell \}} \big]
    \\&\qquad + \sup_{P \in \ocR^\varepsilon_R (x)} E^P \big[ \big| g (v, X_T) - g (w, Y_T) \big| \1_{\{ \sup_{t\in [0, T]} \| X_s \| \vee \|Y_s \| \leq \ell \}} \big]
    \\&\leq \frac{4 \| g \|_\infty C'}{\ell^2} + \sup_{\| z \| \leq \ell} | g (v, z) - g (w, z) | 
    \\&\qquad + \sup_{P \in \ocR^\varepsilon_R (x)} E^P \big[ \big| g (w, X_T) - g (w, Y_T) \big| \1_{\{ \sup_{t\in [0, T]} \| X_s \| \vee \|Y_s\| \leq \ell \}} \big]
    \\&\leq \frac{4 \| g \|_\infty C'}{\ell^2} + \sup_{\| z \| \leq \ell} | g (v, z) - g (w, z) | + 2 \| g \|_\infty \delta 
    \\&\qquad + \sup_{P \in \ocR^\varepsilon_R (x)} E^P \big[ \big| g (w, X_\tau) - g (w, Y_\tau) \big| \1_{\{ \sup_{t \in [0, \tau]} \| X_s \| \vee \|Y_s\| \leq \ell \}} \big].
\end{align*}
Let \(\gamma = \gamma (w, \ell) \colon \bR_+ \to \bR_+\) be an increasing concave modulus of continuity for \(z \mapsto g (w, z)\) when restricted to the ball \(\{ z \in \bR^d \colon \| z \| \leq \ell \}\). It is well-known that such a modulus exists; see \cite[Proposition~3.15]{DN_11}.
Using Jensen's inequality twice, and (ii) above, we obtain that 
\begin{align*}
    \sup_{P \in \ocR^\varepsilon_R (x)} E^P \big[ \big| g (w, X_\tau) - g (w, Y_\tau) \big| \1_{\{ \sup_{t \in [0, \tau]} \| X_s \| \vee \|Y_s\| \leq \ell \}} \big] 
    & \leq \sup_{P \in \ocR^\varepsilon_R (x)} E^P \big[ \gamma (\| X_\tau- Y_\tau \|) \big] 
    \\&\leq \sup_{P \in \ocR^\varepsilon_R (x)} \gamma \Big( \sqrt{ E^P \big[ \| X_\tau - Y_\tau \|^2 \big] } \Big)
    \\&\leq \gamma \big( \sqrt{ \kappa ( \|x - y\| ) } \big). 
\end{align*}
All together, exchanging also the roles of \(x\) and \(y\), taking \(\ell\) large and \(\delta\) small shows that 
\[
\big| \widehat{T}_T^\varepsilon ( g (v, \cdot \,) ) (x) - \widehat{T}_T^\varepsilon (g (w, \cdot \, ) ) (y) \big| 
\]
can be made arbitrarily small when \(v \to w\) and \(x \to y\). Here, we use that \[\sup_{\| z \| \leq \ell} | g (v, z) - g (w, z) | \to 0\] as \(v \to w\) by Berge's maximum theorem. 
We conclude that \(\widehat{\cE}^\varepsilon\) is continuity preserving. 

{\em Step 2:} Thanks to Theorem~\ref{theo: 1DD}, as both \(\cE^\varepsilon\) and \(\widehat{\cE}^\varepsilon\) are Markovian homogeneous convex expectations (trivially without fixed times of discontinuity) that are further continuity preserving, the identity \(\cE^\varepsilon = \widehat{\cE}^\varepsilon\) follows once we show that \(T_t^\varepsilon = \widehat{T}_t^\varepsilon\) on \(C_b (\bR^d; \bR)\) for all \(t > 0\). Take \(f \in C_b (\bR^d; \bR)\) and set 
\[
H (z, \nabla u, \nabla^2 u) := \sup_{\lambda \in \Lambda} \Big\{ \langle \mu^\varepsilon (z, \lambda), \nabla u \rangle + \tfrac{1}{2} \on{tr} \big[ \sigma^\varepsilon (\sigma^\varepsilon)^* (z, \lambda) \nabla^2 u \big] \Big\}. 
\]
Using Step 1 and the standard estimate 
\begin{equation}\label{eq: time bound} 
\begin{split} 
    \forall \, P \in \ocR_R^\varepsilon (x), \, 0 \leq s < t \colon \, \ E^P \big[ \| &X_t - X_s \| \big] 
    \\&\leq C (\|\mu^\varepsilon\|_\infty, \varepsilon^{- 1/ 2} \| \sigma^\varepsilon\|_\infty, \| \sigma^\varepsilon\|_\infty, R) \, \big( (t - s) + \sqrt{t - s} \big),
\end{split}
\end{equation} 
it follows as in the proof of \cite[Lemma~2.36]{CN_24_SPA} that the function
\[
\overline{u}^\varepsilon (t, z) := \sup_{P \in \ocR^\varepsilon (z)} E^P \big[ \exp \{ \varepsilon^{-1} f (X_{T - t}) \} \big], \quad (t, z) \in [0, T] \times \bR^d,  
\]
 is continuous.
Now, using dynamic programming arguments shows that \(\overline{u}^\varepsilon\) is a bounded viscosity solution to 
\begin{align} \label{eq: PDE vor trafo} 
-\frac{d u}{dt} - H (z, \nabla u, \nabla^2 u) = 0, \quad u (T, \, \cdot \, ) = \exp \{ \varepsilon^{-1} f \};
\end{align} 
see the proofs of \cite[Theorem~4.13]{CN_23_EJP} or \cite[Lemma~2.38]{CN_24_SPA} for some details.
By a change of variable formula for viscosity solutions (cf., e.g., \cite[Proposition~6.7]{T_13}), \(u^\varepsilon := \varepsilon \log \overline{u}^\varepsilon\) is a bounded (because \(\overline{u}^\varepsilon\) is bounded away from zero) viscosity solution to 
\begin{align} \label{eq: PDE trafo} 
  -  \frac{d u}{dt} - H (z, \nabla u,  \varepsilon^{-1}\nabla u (\nabla u)' + \nabla^2 u ) = 0, \quad u (T, \, \cdot \,) = f.
\end{align}
A short computation shows that 
\begin{align*}
    H (z&, \nabla u, \varepsilon^{-1} \nabla u (\nabla u)'  + \nabla^2 u ) 
    \\&= \sup_{\lambda \in \Lambda} \Big\{ \langle \mu^\varepsilon (z, \lambda), \nabla u) \rangle + \frac{1}{2 \varepsilon} \| (\sigma^\varepsilon)^* (z, \lambda)\nabla u \|^2 + \frac{1}{2} \on{tr} \big[ \sigma^\varepsilon (\sigma^\varepsilon)^* (z, \lambda) \nabla^2 u \big] \Big\} 
    \\&= \sup_{\lambda \in \Lambda, \, a \in \bR^r } \Big\{ \langle \mu^\varepsilon (z, \lambda), \nabla u) \rangle + \langle \varepsilon^{- 1/ 2} \sigma^\varepsilon (z, \lambda) \, a, \nabla u \rangle - \frac{1}{2} \| a \|^2 + \frac{1}{2} \on{tr} \big[ \sigma^\varepsilon (\sigma^\varepsilon)^* (z, \lambda) \nabla^2 u \big]  \Big\}.
\end{align*}
In view of this representation, dynamic programming arguments show that 
\[
\widehat{u}^\varepsilon (t, z) := \sup_{P \in \widehat{\mathcal{R}}^\varepsilon (z)} E^P \Big[ f (X_{T - t}) - \frac{1}{2} \int_{- \infty}^\infty \int_{\bR^r \times \Lambda} \| a \|^2 \, \overline{M} (ds, da, d \lambda) \Big], \quad (t, z) \in [0, T] \times \bR^d, 
\]
is a bounded viscosity solution to \eqref{eq: PDE trafo}; see the proof of \cite[Lemma~2.38]{CN_24_SPA} for some details (and notice that its continuity follows from Step 1 and an estimate of the form \eqref{eq: time bound}). 
The change of variable rule entails that the PDEs \eqref{eq: PDE vor trafo} and \eqref{eq: PDE trafo} share uniqueness properties. Since uniqueness among bounded viscosity solutions holds for \eqref{eq: PDE vor trafo} by \cite[Theorem~4.4.5]{pham}, using the transformation \(z \mapsto \exp \{ \varepsilon^{-1} z \}\), the same is true for \eqref{eq: PDE trafo}. 
As a consequence, we obtain that 
\(
u^\varepsilon = \widehat{u}^\varepsilon, 
\)
which implies that \(T_t^\varepsilon = \widehat{T}_t^\varepsilon\) on \(C_b (\bR^d; \bR)\) for all \(t > 0\). This identity completes the proof. 
\end{proof}

We also provide a convenient representation for the limit in Theorem~\ref{theo: laplace principle}. 
Let \(\widehat{\cE}^0\) be the convex expectation, for \((t, \omega) \in \bR \times \Omega, \varphi \in \USA_b (\Omega; \bR),\)
\[
\widehat{\cE}^0 (\varphi) (t, \omega) := \sup_{P \in \cK^0 (t, \omega)} E^P \Big[ \varphi (X) - \frac{1}{2} \int_{- \infty}^\infty \int_{\bR^r \times \Lambda} \| z \|^2 \, \overline{m}_s (dz, d \lambda) \, ds \Big], 
\]
where \(\cK^0 (t, \omega)\) are the relaxed control rules associated to
\begin{align} \label{eq: limiting dynamics} 
\frac{d Y_t}{dt} = \int_{\bR^r \times \Lambda} \Big( \mu^0 (Y_t, \lambda ) + \sigma^0 (Y_t, \lambda) z \Big) \, \overline{m}_t (dz, d \lambda). 
\end{align} 
Similar to Lemma~\ref{lem: iden sensitivity}, we obtain the following:
\begin{lemma} \label{lem: sensitity limit iden}
For all \((t, \omega) \in \bR \times \Omega\) and \(\varphi \in \USA_b (\Omega; \bR)\), 
\begin{align*}
    \widehat{\cE}^0 (\varphi) (t, \omega) &= \sup \Big\{ \varphi (X^{t, \omega}) - \frac{1}{2} \int_{- \infty}^\infty \|a_s\|^2 \, ds \colon \\ &\hspace{2cm}a \colon \bR \to \bR^r, \lambda \colon \bR \to \Lambda \text{ measurable}, \phantom \int 
\\&\hspace{2cm}d X^{t, \omega}_s = (\mu^0(X^{t, \omega}_s, \lambda_s) + \sigma^0 (X^{t, \omega}_s, \lambda_s) \, a_s ) \, ds, \, s > t, \phantom \int 
\\&\hspace{2cm} X^{t, \omega}_s = \omega (s), \, s \leq t \, \Big\}.
\end{align*}
\end{lemma}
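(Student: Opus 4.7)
The plan is to follow the same two-step strategy as in the proof of Lemma~\ref{lem: iden sensitivity}: reduce the identity on path space to an identity of the associated Markovian state-space semigroups via Theorem~\ref{theo: 1DD}, and then identify both semigroups as the unique bounded viscosity solution of one and the same first-order Hamilton--Jacobi--Bellman equation.

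First, I would denote the right-hand side of the claimed identity by $\widehat{\cE}^{0,\ast}(\varphi)(t,\omega)$ and verify that it is a Markovian homogeneous sublinear expectation without fixed times of discontinuity, which is continuity preserving. Monotonicity, convexity, sublinearity and continuity from above/below are immediate from the supremum representation and the boundedness of $\varphi$; time consistency is the classical dynamic programming principle for deterministic control problems, obtained by concatenating controls across an intermediate time; homogeneity follows from the time-translation invariance of $(\mu^0,\sigma^0)$; the Markovian property is evident because, for $s>t$, the ODE solution $X^{t,\omega}$ depends on $\omega$ only through $\omega(t)$. For the continuity-preserving property, I would observe that, because $\varphi$ is bounded, near-optimal controls $a$ are confined to a ball in $L^2(\bR;\bR^r)$; on this restricted set the controlled flow depends equicontinuously on the initial state by the Lipschitz hypothesis on $\mu^0,\sigma^0$ together with Gronwall, and a Berge-type argument then yields continuity in the finite-dimensional marginals. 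An analogous (slightly simpler, since there is no Brownian term at $\varepsilon=0$) argument shows that $\widehat{\cE}^0$ itself enjoys all of these properties.

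Theorem~\ref{theo: 1DD} then reduces the equality $\widehat{\cE}^0(\varphi)=\widehat{\cE}^{0,\ast}(\varphi)$ on $\USA_b(\Omega;\bR)$ to the equality of the associated Markovian semigroups $\widehat{T}^0_t$ and $\widehat{T}^{0,\ast}_t$ on $C_b(\bR^d;\bR)$. For fixed $g\in C_b(\bR^d;\bR)$ and $T>0$, standard dynamic programming together with a viscosity argument (in the spirit of Step~2 of the proof of Lemma~\ref{lem: iden sensitivity} and \cite[Theorem~4.13]{CN_23_EJP}) shows that both $(t,x)\mapsto \widehat{T}^0_{T-t}(g)(x)$ and $(t,x)\mapsto \widehat{T}^{0,\ast}_{T-t}(g)(x)$ are bounded continuous viscosity solutions on $[0,T]\times\bR^d$ of
\[
-\tfrac{du}{dt}-H^0(x,\nabla u)=0,\qquad u(T,\,\cdot\,)=g,
\]
with the same Hamiltonian
\[
H^0(x,p)=\sup_{\lambda\in\Lambda}\Big\{\langle\mu^0(x,\lambda),p\rangle+\tfrac12\|(\sigma^0)^\ast(x,\lambda)p\|^2\Big\}.
\]
The key point is that the relaxed and strict Hamiltonians coincide: the relaxed Hamiltonian $\sup_{\bar m}\int[\langle\mu^0(x,\lambda)+\sigma^0(x,\lambda)z,p\rangle-\tfrac12\|z\|^2]\,\bar m(dz,d\lambda)$ equals the pointwise supremum $\sup_{\lambda,z}\{\cdots\}$ (attained at Dirac measures, as the integrand is bounded above in $z$ for each $\lambda$), which after maximizing the strictly concave function $z\mapsto\langle\sigma^0(x,\lambda)z,p\rangle-\tfrac12\|z\|^2$ at $z=(\sigma^0)^\ast(x,\lambda)p$ yields $H^0(x,p)$; the strict Hamiltonian $\sup_{\lambda,a}\{\langle\mu^0+\sigma^0 a,p\rangle-\tfrac12\|a\|^2\}$ reduces to the same expression by the same optimization. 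As $H^0$ is Lipschitz and convex in $p$, comparison for bounded viscosity solutions is classical (cf.\ \cite[Theorem~4.4.5]{pham}), so $\widehat{T}^0_t=\widehat{T}^{0,\ast}_t$ on $C_b(\bR^d;\bR)$ and the lemma follows from Theorem~\ref{theo: 1DD}.

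The main obstacle is the verification that $\widehat{\cE}^{0,\ast}$ is continuity preserving with a modulus of continuity that is uniform on $L^2$-bounded sets of controls: this is essential both to invoke Theorem~\ref{theo: 1DD} and to carry out the dynamic programming/viscosity characterization on the state space. This reduces to quantitative stability of solutions of Lipschitz ODEs with an additional driving term $a\in L^2$, which is handled by Gronwall combined with a Cauchy--Schwarz estimate once one restricts to $\int\|a_s\|^2ds\le 2\|\varphi\|_\infty+1$.
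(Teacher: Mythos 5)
Your proposal follows essentially the same route as the paper: the paper's own proof is only a sketch, stating that one argues as in Lemma~\ref{lem: iden sensitivity} (reduction to the Markovian semigroups via Theorem~\ref{theo: 1DD}, then identification of both semigroups as the unique bounded viscosity solution of the first-order HJB equation), with uniqueness supplied by \cite[Theorem~VII.11.1]{FleSon_06} and with the additional remark that it suffices to identify the semigroups on $\textit{Lip}_b(\bR^d;\bR)$, since any $g\in C_b(\bR^d;\bR)$ is a decreasing limit of bounded Lipschitz functions and convex expectations are continuous from above on $C_b$.

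One point in your justification is off and worth correcting: the Hamiltonian
$H^0(x,p)=\sup_{\lambda}\{\langle\mu^0(x,\lambda),p\rangle+\tfrac12\|(\sigma^0)^\ast(x,\lambda)p\|^2\}$
is \emph{quadratic} in $p$, hence only locally Lipschitz in the gradient variable, so the ``classical'' comparison principle for Hamiltonians Lipschitz in $p$ (and the second-order result \cite[Theorem~4.4.5]{pham}, which you cite) does not apply directly. This is precisely why the paper invokes the deterministic-control uniqueness theorem \cite[Theorem~VII.11.1]{FleSon_06} for this first-order equation. Your computation identifying the relaxed, strict and pointwise Hamiltonians is correct and matches what is implicit in the paper; the rest of your verification (dynamic programming, homogeneity, Markovianity, and the Gronwall/Cauchy--Schwarz argument for continuity preservation on $L^2$-bounded control sets) is consistent with the paper's Step~1 of Lemma~\ref{lem: iden sensitivity}, adapted to the ODE setting.
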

\begin{proof}
Using the uniqueness result \cite[Theorem~VII.11.1]{FleSon_06} and the fact that it suffices to identify the Markovian semigroups on \(\textit{Lip}_b (\bR^d; \bR)\),\footnote{For any \(f \in C_b (\bR^d; \bR)\), the sequence \(f_n (x) = \sup_{y \in \bR^d} \{ f (y) - n \|x - y\| \}\) approximates \(f\) from above, and it consists of bounded Lipschitz functions. Using that convex expectations are continuous from above on \(C_b (\Omega; \bR)\), it follows that two Markovian semigroups coincide on \(C_b (\bR^d; \bR)\) once they coincide on \(\textit{Lip}_b (\bR^d; \bR)\).} the proof is similar to that of Lemma~\ref{lem: iden sensitivity}. We omit the details for brevity.
\end{proof}

We are now in the position to prove our convergence Theorem~\ref{theo: laplace principle}.

\begin{proof}[Proof of Theorem~\ref{theo: laplace principle}]
Thanks to Lemmas~\ref{lem: iden sensitivity} and \ref{lem: sensitity limit iden}, as well as the homogeneity property of \(\widehat{\cE}^\varepsilon\) and \(\widehat{\cE}^0\), it suffices to prove that 
\[
\widehat{\cE}^\varepsilon (\varphi) (0, \mathbf{x}) \to \widehat{\cE}^0 (\varphi) (0, \mathbf{x}), \quad \varepsilon \to 0, 
\]
for all \(\sigma (X_s, s \in [0, T])\)-measurable \(\varphi \in C_b (\Omega; \bR)\) and all \(x \in \bR^d\).
Fix \(\delta > 0\) and \(x \in \bR^d\).

{\em Step 1:} We start with a preparatory step.
As our convergence assumptions entail \(\varepsilon\)-independent global bounds for all coefficients, we deduce from \eqref{eq: time bound} and Aldous' tightness criterion (cf.~\cite[Theorem~VI.4.5]{JS}) that the set 
    \[
    \overline{\mathcal{R}} := \bigcup_{\varepsilon' \geq 0} \big\{ Q \circ (X_t)_{t \in [0, T]}^{-1} \colon Q \in \mathcal{R}^{\varepsilon'}_R (x) \big\} 
    \]
    is tight. Hence, there exists a compact set \(K \subset C ([0, T]; \bR^d)\) such that 
    \begin{align} \label{eq: tightness bound}
    \sup_{Q \in \overline{\mathcal{R}}} Q ( K^c ) \leq \delta. 
    \end{align} 

{\em Step 2:} Next, we provide a pointwise estimate. 
Fix \(\varepsilon > 0\) and take \(P \in \overline{\mathcal{R}}^\varepsilon_R (x)\) with \(R = 2 \| \varphi \|_\infty + 1\). Under \(P\), possibly on an extended probability space, the coordinate process \((X_t)_{t \geq 0}\) has the dynamics \eqref{eq: dynamics martingale measure}. On the same probability space, let \(Y = (Y_t)_{t \geq 0}\) be an \(\bR^d\)-valued continuous process with dynamics \eqref{eq: limiting dynamics}. 
    Repeating the proof of \cite[Theorem, p.~88]{MP_80}, there exists a stopping time \(\tau = \tau (\delta) \leq T\) and a function \(\kappa \colon \bR_+^3 \to \bR_+\) with \(\gamma (z_1, z_2, z_3) \to 0\) as \(z_1, z_2, z_3 \to 0\) such that 
    \begin{enumerate}
        \item[(i)] \( P (\tau < T) \leq \delta\); 
        \item[(ii)] \(E^P [ \sup_{s \in [0, \tau]} \| X_s - Y_s\|^2 ] \leq \kappa (\|\mu^\varepsilon - \mu^0\|_\infty, \|\varepsilon^{- 1/2}\sigma^\varepsilon - \sigma^0\|_\infty, \|\sigma^\varepsilon\|_\infty)\),
    \end{enumerate}
    where the function \(\kappa\) only depends on \(\delta, T, R\), and the Lipschitz constants and global bounds of \(\mu^0\) and \(\sigma^0\). Let \(K\) be as in \eqref{eq: tightness bound}.
    Now, for an increasing concave modulus of continuity \(\gamma\) of the uniformly continuous function~\(\varphi |_K\), using Jensen's inequality and (i), (ii) above, we obtain the bound
    \begin{align*}
        E^P \big[ \big| \varphi (X) - \varphi (Y) \big| \big] &\leq E^P \Big[ \gamma \Big( \sup_{s \in [0, \tau]} \| X_s - Y_s \| \Big)  \Big] + 4 \delta \| \varphi \|_\infty
        \\&\leq \gamma \Big( \sqrt{ \kappa (\|\mu^\varepsilon - \mu^0\|_\infty, \|\varepsilon^{- 1/2}\sigma^\varepsilon - \sigma^0\|_\infty, \|\sigma^\varepsilon\|_\infty)} \Big) + 4 \delta \| \varphi \|_\infty.
    \end{align*}

    {\em Step 3:} We now enhance the pointwise estimate from Step 2 to a one-sided global estimate. 
    As \(P\) in Step 2 was arbitrary, we obtain
    \begin{align*}
     \widehat{\cE}^\varepsilon (\varphi) (0, \mathbf{x}) - \widehat{\cE}^0 (\varphi) (0, \mathbf{x})
\leq \gamma \Big( \sqrt{ \kappa (\|\mu^\varepsilon - \mu^0\|_\infty, \|\varepsilon^{- 1 / 2}\sigma^\varepsilon - \sigma^0\|_\infty, \|\sigma^\varepsilon\|_\infty)} \Big) + 4 \delta \| \varphi \|_\infty.
    \end{align*} 

    {\em Step 4:} 
Starting with a probability measure \(P \in \mathcal{R}^0_R (x)\) and repeating the Steps 2 and 3, we get the converse bound
\[
 \widehat{\cE}^0 (\varphi) (0, \mathbf{x}) - \widehat{\cE}^\varepsilon (\varphi) (0, \mathbf{x}) \leq \gamma \Big( \sqrt{ \kappa (\|b^\varepsilon - b^0\|_\infty, \|\varepsilon^{- 1/2}\sigma^\varepsilon - \sigma^0\|_\infty, \|\sigma^\varepsilon\|_\infty)} \Big) + 4 \delta \| \varphi \|_\infty, 
\]
and, together with Step 3, 
\[
\big| \widehat{\cE}^\varepsilon (\varphi) (0, \mathbf{x}) - \widehat{\cE}^0 (\varphi) (0, \mathbf{x}) \big| \leq \gamma \Big( \sqrt{ \kappa (\|b^\varepsilon - b^0\|_\infty, \|\varepsilon^{- 1 / 2}\sigma^\varepsilon - \sigma^0\|_\infty, \|\sigma^\varepsilon\|_\infty)} \Big) + 4 \delta \| \varphi \|_\infty.
\]
As \(\delta > 0\) was arbitrary, and using the convergence assumptions, the claim follows. 
\end{proof}

\section{Connection to Related Concepts}\label{sec:related concepts}

This section provides a technical discussion of our results in relation to the existing literature. 
Over the past decades, the theory has evolved from linear to  sublinear and convex expectations, from dominated to non-dominated probabilistic models and from Markovian frameworks to path-dependent settings. 
Our aim here is to clarify how the present results fit into this development and to explain which structural advances allow for a unified identification of expectations, penalty representations, and semigroups on canonical path space.

At the core of this paper lies the identification of a single path-dependent object in three equivalent formulations: 
a time-consistent convex expectation on canonical path space, a dynamic penalty or control representation and a convex semigroup acting on path functionals. 
In dominated Markovian models, such correspondences are classical: dynamic programming leads to Nisio-type semigroups, while convex duality yields penalized control representations \cite{FleSon_06, pham}. 

Within the theory of risk measures and nonlinear expectations, convex expectations extend linear expectations by replacing linearity with a penalized formulation \cite{FS2016,P2019,W1991}. 
In discrete time, time consistency is typically characterized by stability under pasting and is equivalent to a dynamic programming principle \cite{bershre, FS2016}. 
In continuous time, and especially under non-dominated priors, the main challenge lies in the construction of conditional versions that satisfy suitable measurability and concatenation properties, as developed in \cite{nutz_13, NVH}. 

Our representation results rely on convex duality: while in dominated settings duality is usually formulated in $L^p$-spaces via the Fenchel--Moreau theorem, the non-dominated path-space setting requires a pointwise formulation, for which general conditional representation results are available in \cite{bartl_20}. 
The space $\USA_b$ of bounded upper semianalytic functions plays a central role in this context, as it is stable under taking certain uncountable suprema and therefore constitutes the natural domain for the construction of nonlinear expectations \cite{NVH}.
Building on these foundations, we provide minimal conditions under which convex expectations and their representing penalty functions are in one-to-one correspondence (cf.\ Theorem~\ref{theo: represntation convex}). 
In the sublinear case, these conditions reduce to the classical stability under conditioning and pasting.

Several well-established concepts can be linked to our framework. 
From the expectation viewpoint, filtration-consistent nonlinear expectations in Brownian settings include $g$-expectations, which admit representations via BSDEs \cite{CHMP2002,EPQ1997,padoux_peng_90}, as well as Peng’s $G$-expectation, which models volatility uncertainty directly on path space and is linked to fully nonlinear PDEs through 2BSDEs,  and PPDEs \cite{CSTV2007,denis_hu_peng_11,EKTZ2014,peng_et_all_PPDE_survey,STZ2012}  in the path-dependent case.
From the penalty or control perspective, such expectations arise as value functionals of stochastic control problems over families of semimartingale laws with uncertain characteristics, where time consistency is encoded through stability under conditioning and concatenation \cite{P2019}. 
This perspective extends to jump uncertainty and nonlinear Lévy-type dynamics, leading to sublinear or convex semigroups on path space \cite{CN_23_arxiv_jumps, CN_23_arxiv_viscosity,HP_21,neufeld2017nonlinear}, and more recent work allows for genuinely non-dominated and path-dependent uncertainty sets~\cite{C_24_JMAA,CN_23_EJP,CN_24_SPA}.

A key contribution of the present work is the finite-dimensional identification result (Theorem~\ref{theo: FDD}), which establishes a structural link between the expectation-based and the control-based viewpoints. 
It shows that, under the standing assumptions, a convex expectation and its associated penalty or control representation are uniquely determined by their finite-dimensional marginals. 
For instance, this yields a precise bridge between the axiomatic specification of $G$-L\'evy dynamics via finite-dimensional distributions in the sense of Hu and Peng \cite{HP_21} and the characteristics/control-based construction of nonlinear L\'evy processes developed by Neufeld and Nutz \cite{neufeld2017nonlinear}: once the finite-dimensional marginals coincide, both approaches extend uniquely to the same path-space expectation and hence to the same penalty representation. 

Finally, we emphasize the role of homogeneity and the use of a two-sided time axis. 
On path space, time-homogeneous dynamics are most naturally expressed through invariance with respect to canonical time shifts of entire trajectories. 
Working with two-sided time renders these shifts invertible and avoids boundary effects when combined with conditioning and concatenation. 
This shift-based viewpoint provides the structural basis for the homogeneous semigroup correspondence on path space and explains how conditional nonlinear expectations give rise to convex evolutionary semigroups in the sense of \cite{DKK24}. 
In this way, homogeneity emerges as the key mechanism linking expectations, penalties and semigroups within a unified and fully path-dependent dynamic framework.

\appendix

\section{On the tower property for convex expectations}\label{app:tower property}
 The tower rule of conditional convex expectations was profoundly studied in \cite{bartl_20}. 
The purpose of this appendix is to refine an assumption in the statement of \cite[Theorem~2.11]{bartl_20} within the context of our dynamic framework.
 Let \(D_{t} \equiv D_{t, \infty}\) be as in \eqref{eq: uni cont D}, i.e., 
\[
D_t = \Big\{ g (X_{t_1}, \dots, X_{t_n}) \colon g \in UC_b (E^n; \bR), \, t_1, \dots, t_n \in (t, \infty)\cap \mathbb{Q}, \, n \in \mathbb{N} \Big\},
\]
where \(UC_b (E^n; \bR)\) is the space of real-valued bounded functions on \(E^n\) that are uniformly continuous w.r.t. a metric that induces the product topology. The result can be proved following the lines of \cite[Theorem~2.11]{bartl_20}. We skip the details for brevity.

 \begin{theorem} \label{theo: theorem 2.11 bartl}
 	Fix \(t \in \bR\), let \(\alpha \colon \mathfrak{P} (\Omega) \to [0, \infty]\) be a convex function with compact level sets such that \(\inf_P \alpha (P) = 0\), and let \(\gamma \colon \Omega \times \mathfrak{P}(\Omega) \to [0, \infty]\) be a function with the properties \ref{PF3}-\ref{PF6} and without fixed times of discontinuity. For \(\varphi \in \USA_b (\Omega; \bR)\) and \(\omega \in \Omega\), define 
 	\begin{align*}
 	\cE (\varphi) &:= \sup_{P \in \mathfrak{P}(\Omega)} \Big(E^P \big[ \varphi \big] - \alpha (P)\Big), 
 	\\
 	\cE_t(\varphi)(\omega) &:= \sup_{P \in \mathfrak{P}(\Omega)} \Big(E^P \big[ \varphi \big] - \gamma (\omega, P) \Big),
 	\\
 	\beta (P) &:= \sup_{\psi \in \USA_b (\Omega, \cF_t; \bR)} \Big( E^P \big[ \psi \big] - \cE (\psi) \Big).
 	\end{align*}
 	Assume that \(\omega \mapsto \cE_t (\varphi )(\omega)\) is \(\cF_t\)-measurable for all \(\varphi \in D_{t}\). Then, 
 	\begin{align*}
 		\cE (\, \cdot \,) \leq \cE (\cE_t (\, \cdot \, )) \quad &\Longleftrightarrow \quad \alpha (P) \geq \beta (P) + E^P \big[ \gamma (\, \cdot \,, P (\, \cdot \, | \cF_t)) \big] \ \text{ for all } P \in \mathfrak{P} (\Omega). 
 	\end{align*}
 \end{theorem}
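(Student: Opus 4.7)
The plan is to follow the architecture of Bartl's proof of \cite[Theorem~2.11]{bartl_20}, but to replace each instance where he invokes measurability of $\omega \mapsto \cE_t(\varphi)(\omega)$ on the full space $C_b(\Omega; \bR)$ by an argument that only needs this measurability on the countable collection $D_t$. The absence of fixed times of discontinuity for $\gamma$ together with the observation recorded in Remark~\ref{rem: D replace} are the structural ingredients that make this reduction possible, since they guarantee that $D_t$ is sufficiently rich to replace $\USA_b(\Omega, \cF_s; \bR)$ inside the cocycle-type minimax identity of Lemma~\ref{lem: rem idenity}.

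For the implication $\cE(\, \cdot \,) \leq \cE(\cE_t(\, \cdot \,)) \Longrightarrow \alpha \geq \beta + E^P[\gamma(\, \cdot \,, P(\, \cdot \, | \cF_t))]$, I would proceed as in Bartl by fixing $P \in \mathfrak{P}(\Omega)$ with $\alpha(P) < \infty$, and, for each $\varphi \in \USA_b(\Omega; \bR)$, estimating
\begin{align*}
E^P[\varphi] - \alpha(P) \leq \cE(\varphi) \leq \cE(\cE_t(\varphi)) \leq E^P[\cE_t(\varphi)] - E^P[\varphi] + E^P[\varphi] + \text{penalty terms},
\end{align*}
where duality gives $E^P[\cE_t(\varphi)] \leq E^P[E^{P(\, \cdot \,|\cF_t)}[\varphi]] + E^P[\gamma(\, \cdot \,, P(\, \cdot \,|\cF_t))]$ after choosing the inner measure to be $P(\, \cdot \,|\cF_t)$. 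Taking the supremum over a sufficiently rich subclass of $\varphi$ and then splitting the resulting supremum into the $\cF_t$-measurable part (giving $\beta(P)$) and the conditional part (giving $E^P[\gamma(\, \cdot \,, P(\, \cdot \,|\cF_t))]$) yields the inequality. The only subtlety is ensuring that the outer integral $E^P[\cE_t(\varphi)]$ is well defined; for $\varphi \in D_t$ this is immediate by hypothesis, and by an approximation using continuity from above/below one extends to general $\varphi \in \USA_b(\Omega; \bR)$.

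For the converse $\alpha \geq \beta + E^P[\gamma(\, \cdot \,, P(\, \cdot \,|\cF_t))] \Longrightarrow \cE \leq \cE(\cE_t)$, I would fix $\varphi \in \USA_b(\Omega; \bR)$ and $P \in \mathfrak{P}(\Omega)$ with $\alpha(P) < \infty$, disintegrate $P = P \otimes_t P(\, \cdot \, | \cF_t)$, and bound
\begin{align*}
E^P[\varphi] - \alpha(P) \leq E^P[\varphi] - \beta(P) - E^P[\gamma(\, \cdot \,, P(\, \cdot \,|\cF_t))] \leq E^P[\cE_t(\varphi)] - \beta(P),
\end{align*}
where the last step uses $E^P[\varphi \mid \cF_t](\omega) = E^{P(\, \cdot \,|\cF_t)(\omega)}[\varphi] \leq \cE_t(\varphi)(\omega) + \gamma(\omega, P(\, \cdot \,|\cF_t)(\omega))$. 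It then remains to dominate $E^P[\cE_t(\varphi)] - \beta(P)$ by $\cE(\cE_t(\varphi))$; this is where the measurability assumption enters, since we need $\cE_t(\varphi)$ to be evaluated under $\cE$. For $\varphi \in D_t$ this is immediate, and one extends via monotone approximation, using that both $\cE$ and $\cE_t$ are continuous from above and below.

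The main obstacle is the extension from $D_t$ to all of $\USA_b(\Omega; \bR)$ of the $\cF_t$-measurability of $\omega \mapsto \cE_t(\varphi)(\omega)$. Here the crucial point is that, by the proof of Theorem~\ref{theo: represntation convex}, one has the dual representation $\gamma(\omega, P) = \sup_{\varphi \in D_t}(E^P[\varphi] - \cE_t(\varphi)(\omega))$ for $P$ supported on $[\omega]_t$, which follows from Lemma~\ref{lem: rem idenity} together with Remark~\ref{rem: D replace} precisely because $\gamma$ has no fixed times of discontinuity. Consequently, the $\cFt \otimes \mathcal{B}(\mathfrak{P}(\Omega))$-measurability of $(\omega, P) \mapsto \gamma(\omega, P)$ is controlled by the $\cF_t$-measurability of $\omega \mapsto \cE_t(\varphi)(\omega)$ for $\varphi \in D_t$ alone. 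With this measurability of $\gamma$ in hand, the argument in \cite[Proof of Theorem~2.11]{bartl_20} carries through verbatim, because the outer $\cE$-expectations of $\cE_t(\varphi)$ can be computed via the dual representation and Fubini-type arguments that only require measurability of $\gamma$, not of $\cE_t(\varphi)$ for every $\varphi$. This completes the reduction to Bartl's framework.
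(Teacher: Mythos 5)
Your overall strategy matches the paper's, whose ``proof'' of this statement consists of the single remark that it can be proved along the lines of \cite[Theorem~2.11]{bartl_20}: the one genuinely new ingredient is, as you say, that for \(P([\omega]_t)=1\) the identity \(\gamma(\omega,P)=\sup_{\varphi\in D_t}\big(E^P[\varphi]-\cE_t(\varphi)(\omega)\big)\) holds by Lemma~\ref{lem: rem idenity} (with \(s=\infty\), valid precisely because \(\gamma\) has no fixed times of discontinuity), so that the measurability Bartl extracts from \(C_b(\Omega;\bR)\) is already supplied by the countable class \(D_t\). That diagnosis is correct.

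However, the forward implication as you sketch it would fail. The displayed chain claims \(\cE(\cE_t(\varphi))\le E^P[\cE_t(\varphi)]+\text{penalty}\), and you assert that ``choosing the inner measure to be \(P(\,\cdot\,|\cF_t)\)'' gives \(E^P[\cE_t(\varphi)]\le E^P[E^{P(\cdot|\cF_t)}[\varphi]]+E^P[\gamma(\,\cdot\,,P(\cdot|\cF_t))]\). Both inequalities point the wrong way: plugging a particular measure into a supremum yields \(\cE_t(\varphi)(\omega)\ge E^{P(\cdot|\cF_t)(\omega)}[\varphi]-\gamma(\omega,P(\cdot|\cF_t)(\omega))\), and the dual formula for \(\cE\) gives lower, not upper, bounds of the form \(\cE(\chi)\ge E^P[\chi]-\alpha(P)\). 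In particular one cannot ``split the resulting supremum'' into a \(\beta(P)\)-part and a \(\gamma\)-part without a coupling device. The correct step is to apply the hypothesis \(\cE\le\cE(\cE_t(\cdot))\) to the translated test function \(\tilde\varphi:=\varphi-\cE_t(\varphi)+\psi\) with \(\varphi\in D_t\) and \(\psi\in\USA_b(\Omega,\cF_t;\bR)\); translation invariance of \(\cE_t\) (a consequence of \ref{PF6}) gives \(\cE_t(\tilde\varphi)=\psi\) and hence \(E^P[\varphi]-E^P[\cE_t(\varphi)]+E^P[\psi]-\cE(\psi)\le\alpha(P)\). This is also exactly where the \(\cF_t\)-measurability of \(\cE_t(\varphi)\) for \(\varphi\in D_t\) is consumed (it makes \(\tilde\varphi\) an admissible argument of \(\cE\)). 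A second unaddressed point: passing from \(\sup_{\varphi\in D_t}E^P\big[E^{P(\cdot|\cF_t)}[\varphi]-\cE_t(\varphi)\big]\) to \(E^P[\gamma(\,\cdot\,,P(\cdot|\cF_t))]\) requires interchanging a supremum with \(E^P\) over a family that is not upward directed; one needs Bartl's \(\varepsilon\)-optimal measurable selection from the countable class \(D_t\), i.e.\ test functions \(\sum_k\1_{A_k}\varphi^{(k)}\) with \(A_k\in\cF_t\), \(\varphi^{(k)}\in D_t\), together with the local property \(\cE_t(\sum_k\1_{A_k}\varphi^{(k)})=\sum_k\1_{A_k}\cE_t(\varphi^{(k)})\) (cf.\ Remark~\ref{rem: regular cond property convex}) to keep the resulting \(\cE_t\)-values \(\cF_t\)-measurable. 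These are precisely the ``details'' the paper skips, and your sketch leaves them either reversed or unverified.
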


  \section{On the relation of the properties \ref{PF7} and \ref{M3}} \label{app: pf lemma scalar mb}
The following lemma clarifies the relationship between \ref{PF7} and \ref{M3}.
    \begin{lemma} \label{lem: scalar measurability}
	Let \((A, \mathcal{A})\) be a measurable space 
    and let \(A \ni a \mapsto \cV (a) \subset \mathfrak{P}(\Omega)\) be a set-valued map with nonempty, compact and convex values. Then, \(a \mapsto \cV (a)\) is (weakly) \(\cA\)-measurable if and only if the scalar maps 
    \[ 
    a \mapsto \sup_{P \in \cV (a)} E^P \big[ \phi \big], \quad \phi \in C_b (\Omega; \bR), 
    \]
    are \(\cA\)-measurable.
 \end{lemma}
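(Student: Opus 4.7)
The forward direction is a direct consequence of the measurable maximum theorem. For each $\phi \in C_b (\Omega; \bR)$, the function $(a, P) \mapsto E^P [\phi]$ is Carath\'eodory, being constant in $a$ and continuous in $P$. Since $\cV$ has nonempty compact values in the separable metrizable space $\mathfrak{P}(\Omega)$, its weak measurability coincides with measurability by \cite[Theorem~18.10]{charalambos2013infinite}, and the measurable maximum theorem \cite[Theorem~18.19]{charalambos2013infinite} directly yields measurability of $a \mapsto \sup_{P \in \cV (a)} E^P [\phi]$.

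For the converse, the plan is to reduce weak measurability to a finite dimensional separation statement in $\bR^k$. Endow $\Omega$ with a totally bounded metric equivalent to the original one, so that the space $\textit{UC}_b (\Omega; \bR)$ of bounded uniformly continuous functions becomes separable in the sup norm (cf.~\cite[Lemma~6.3, p.~43]{Part_67}), and fix a countable dense subset $D$. A standard density approximation combined with Portmanteau's theorem shows that the weak topology on $\mathfrak{P}(\Omega)$ is already generated by $\{P \mapsto E^P [\phi] \colon \phi \in D\}$, so the sets
\[
V = \bigcap_{i = 1}^k \{P \colon c_i < E^P [\phi_i] < d_i\}, \qquad k \in \mathbb{N},\ \phi_i \in D,\ c_i, d_i \in \mathbb{Q},
\]
form a countable base for the weak topology. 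It therefore suffices to verify that $\{a \colon \cV (a) \cap V \neq \emptyset\} \in \cA$ for every such $V$, since arbitrary open sets are countable unions of these.

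Fix $V$ as above, set $O := \prod_{i = 1}^k (c_i, d_i) \subset \bR^k$, and consider the continuous linear map $T \colon \mathfrak{P}(\Omega) \to \bR^k$, $T (P) := (E^P [\phi_1], \dots, E^P [\phi_k])$. Then $\cV (a) \cap V \neq \emptyset$ if and only if the compact convex set $K (a) := T (\cV (a)) \subset \bR^k$ meets the open box $O$. Its support function $s_{K (a)} (y) = \sup_{z \in K (a)} \langle y, z \rangle = \sup_{P \in \cV (a)} E^P [\sum_{i = 1}^k y_i \phi_i]$ is measurable in $a$ for each $y$ by hypothesis (since $\sum_i y_i \phi_i \in C_b (\Omega; \bR)$) and Lipschitz in $y$. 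Elementary hyperplane separation in $\bR^k$ gives $K (a) \cap O = \emptyset$ if and only if there exists $y \in S^{k - 1}$ with $s_{K (a)} (y) + s_{\overline{O}} (- y) \leq 0$, where $s_{\overline{O}}$ is the support function of $\overline{O}$. By continuity in $y$ and compactness of the sphere, this infimum is attained and can be evaluated along any countable dense $Q \subset S^{k - 1}$, yielding
\[
\{a \colon \cV (a) \cap V = \emptyset\} = \Big\{ a \colon \inf_{y \in Q} \bigl[ s_{K (a)} (y) + s_{\overline{O}} (- y) \bigr] \leq 0 \Big\} \in \cA.
\]

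The main obstacle is conceptual rather than computational: the Hahn--Banach-style scalar measurability results of Castaing--Valadier and Barbati--Hess are classically stated for correspondences in Banach spaces, whereas $\mathfrak{P}(\Omega)$ is merely a convex subset of a locally convex space. The finite dimensional projection through $T$ bridges this gap, and the countable dense subsets $D \subset \textit{UC}_b$ and $Q \subset S^{k - 1}$ convert the abstract separation into a countable infimum of functions that are measurable by hypothesis.
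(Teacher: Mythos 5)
Your proof is correct, but it takes a genuinely different route from the paper. The paper's argument is essentially a two-line reduction: it regards \(\mathfrak{P}(\Omega)\) as sitting inside the space \(\operatorname{ca}(\Omega)\) of bounded variation Radon measures, observes that this is a Hausdorff locally convex space with the Lindel\"of property under the topology of convergence in distribution (citing \cite[Lemma~6.6.4, Theorem~8.9.6]{bogachev}), and then invokes \cite[Theorem~5.1]{barbati_hess_98}, which gives the equivalence of weak measurability and scalar measurability for compact convex valued correspondences in exactly that generality -- so the ``gap'' you flag concerning Banach-space hypotheses is in fact already closed in the cited reference. Your argument instead avoids the abstract scalarization theorem entirely: the forward direction via the measurable maximum theorem is fine (and matches what the paper does elsewhere, e.g.\ in Theorem~\ref{theo: CE control setting}), and your converse is a self-contained reduction to \(\bR^k\) -- countable base of the weak topology from a countable dense subset of \(\UC_b\), projection \(T\) onto finitely many evaluations, and support-function separation of the compact convex image \(K(a)\) from the open box \(O\), with the existential over the sphere replaced by a countable infimum. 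I checked the separation equivalence (a nonzero linear functional cannot attain its supremum over an open set at an interior point, which rules out \(K(a)\cap O\neq\emptyset\) under the separation inequality) and the identification of the two topologies via Portmanteau and metrizability; both are sound, modulo the harmless convention that empty boxes \(O\) are discarded. What each approach buys: the paper's proof is shorter and delegates the work to a general theorem on LCS-valued correspondences, while yours is elementary, transparent, and makes explicit use of the specific structure of \(\mathfrak{P}(\Omega)\) (separable metrizable, topology generated by countably many continuous linear evaluations), at the cost of more length.
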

 \begin{proof}
 	Let \(\ca (\Omega)\) be the space of bounded variation Radon measures on \((\Omega, \cF)\). Endowed with the topology of convergence in distribution, this space is a Hausdorff LCS with the Lindelöf property (cf. \cite[Lemma~6.6.4, Theorem~8.9.6]{bogachev}). Considering \(\cV\) as a set-valued map with nonempty, compact and convex values in \(\ca (\Omega)\), the claimed equivalence follows from \cite[Theorem~5.1]{barbati_hess_98}.
\end{proof} 

\bibliographystyle{abbrv}
\bibliography{references}

\end{document}